\documentclass[12pt]{amsart}

\setlength{\textwidth}{150mm}
\setlength{\textheight}{220mm}
\setlength{\oddsidemargin}{5.5mm}
\setlength{\evensidemargin}{5.5mm}

\usepackage{amsmath}
\usepackage{amssymb}
\usepackage{amscd}
\usepackage[cmtip,all]{xy}

\title{Quasi-Galois points}
\author{Satoru Fukasawa, Kei Miura and Takeshi Takahashi}

\subjclass[2010]{14H50, 14H05, 12F10}
\keywords{quasi-Galois point, Galois point, Galois group, plane curve}
\address{Department of Mathematical Sciences, Faculty of Science, Yamagata University,  
Kojirakawa-machi 1-4-12, Yamagata 990-8560, Japan}
\email{s.fukasawa@sci.kj.yamagata-u.ac.jp} 
\thanks{The first author was partially supported by JSPS KAKENHI Grant Number 25800002.} 
\address{Department of Mathematics, Ube National College of Technology, Ube, Yamaguchi 755-8555, Japan}
\email{kmiura@ube-k.ac.jp}
\thanks{The second author was partially supported by JSPS KAKENHI Grant Number 26400057.} 
\address{Department of Information Engineering, Faculty of Engineering, Niigata University, Niigata 950-2181, Japan}
\email{takeshi@ie.niigata-u.ac.jp}
\thanks{The third author was partially supported by JSPS KAKENHI Grant Number 25400059.}

\newtheorem{theorem}{Theorem}[section]
\newtheorem{proposition}[theorem]{Proposition}
\newtheorem{corollary}[theorem]{Corollary}
\newtheorem{lemma}[theorem]{Lemma} 

\newtheorem{fact}[theorem]{Fact}

\theoremstyle{definition}
\newtheorem{example}[theorem]{Example}
\newtheorem{remark}[theorem]{Remark}
\newtheorem{definition}[theorem]{Definition}

\begin{document}
\begin{abstract} 
We introduce the new notion of the ``quasi-Galois point'' in Algebraic geometry, which is a generalization of the Galois point. 
A point $P$ in projective plane is said to be quasi-Galois for a plane curve if the curve admits a non-trivial birational transformation which preserves the fibers of the projection $\pi_P$ from $P$. 
We discuss the standard form of the defining equation of curves with quasi-Galois points, the number of quasi-Galois points, the structure of the Galois group for the projection, relations with dual curves, and so on. 
Our theory also has applications to the study of automorphism groups of algebraic curves. 
\end{abstract}
\maketitle

\section{Introduction}  
The purpose of this article is to establish the theory of the ``{\it quasi-Galois point}'' in Algebraic geometry. 

Let $K$ be an algebraically closed field of characteristic $p \ge 0$, and let $C \subset \mathbb{P}^{2}$ be an irreducible plane curve of degree $d \ge 4$. 
Consider a point $P \in \mathbb{P}^2$ and let $\pi_P: C \dashrightarrow \mathbb{P}^1$ be the projection from $P$. 
The Galois closure of $K(C)/\pi_P^*K(\mathbb P^1)$ is denoted by $L_P$, and the Galois group by $G_P$.
In this situation, Hisao Yoshihara introduced the notion of the {\it Galois point} in 1996 (see e.g. \cite{fukasawa, miura-yoshihara1, yoshihara1}). 
A point $P$ is said to be Galois, if the function field extension $K(C)/\pi_P^*K(\mathbb{P}^1)$ is Galois. 
There are many results on Galois point: the number of Galois points for smooth or singular curves, characterization of curves with Galois points, the structure of the Galois group, and so on. 
Also, Galois points have some applications to the study of algebraic curves (see \cite{fukasawa} and references therein). 
On the other hand, when $p=0$, it follows from theorems of Yoshihara \cite{yoshihara1} (if $C$ is smooth) and Pirola--Schlesinger \cite{pirola-schlesinger} that there exist only finitely many points $P \in C$ (resp. $P \in \mathbb{P}^2 \setminus C$) such that $G_P$ is not the full symmetric group $S_{d-1}$ (resp. $S_d$).  
There are several papers studying those exceptional non-Galois points (see e.g. \cite{miura1, miura-yoshihara1, miura-yoshihara2, takahashi, yoshihara1}). 

We would like to extend the study of Galois points and to study systematically points $P$ such that $G_P$ is not the full symmetric group. 
To do these, we introduce the notion of the {\it quasi-Galois point}.  
For a point $P \in \mathbb P^2$, we define the set
$$G[P]:=\{ \tau \in {\rm Bir}(C) \ | \ \tau (C \cap \ell \setminus \{P\}) \subset \ell \ \mbox{ for a general line } \ell \ni P \} $$
of all birational transformations of $C$ preserving the fibers of the projection $\pi_P$. 

\begin{definition}
If $|G[P]| \ge 2$, then we say that $P$ is a {\it quasi-Galois point}. 
\end{definition}

\begin{remark}
\begin{itemize}
\item[(1)] We have $G[P]=\{\tau \in {\rm Bir}(C) \ | \ \pi_P \circ \tau=\pi_P\}$. 
\item[(2)] The condition $|G[P]|=\deg \pi_P$ holds if and only if $P$ is Galois. 
\end{itemize} 
\end{remark}

As an extension of the study of Galois points, it is important to determine the number of quasi-Galois points, since we have succeeded in obtaining several characterization results by the number of Galois points. 
We will discuss the number of quasi-Galois points in Sections 4, 5 and 6. 
In Section 4, we give upper bounds for the number in general, and determine the number for curves of high degree (Theorems \ref{inner} and \ref{outer}).
Furthermore, we characterize smooth sextic curves attaining an upper bound (Theorem \ref{sextic}).  
In Section 5, we determine completely for Fermat curves (Theorem \ref{Fermat, number}).  
In Section 6, for quartic curves, we give a sharp upper bound, and describe curves attaining the bound (Theorem \ref{quartic}). 
Theorem \ref{quartic} presents a characterization of the Klein quartic, since it is known that the described curve is projectively equivalent to the Klein quartic. 

On the structure of $G_P$ for a quasi-Galois point $P$, we will show that $G_P$ is very small compared to the full symmetric group, at least when $p=0$ and $C$ is smooth (Proposition \ref{Galois closure}).  
In Section 7, we give a criterion for $G_P \cong (\mathbb Z/n\mathbb Z) \times D_{2n}$ in terms of quasi-Galois point (Theorem \ref{2p^2}). 
As initiated in Fukasawa--Miura's papers \cite{fukasawa-miura1, fukasawa-miura2}, by considering the dual curve, we have many examples of projections from points $P$ such that $G_P$ is not the full symmetric group.  
In Section 8, we mention the relations between quasi-Galois points and the dual curve.

Quasi-Galois points contribute to large automorphism groups. 
We will show that if the automorphism group ${\rm Aut}(C)$ of a smooth plane curve $C$ is simple and of even order, then ${\rm Aut}(C)$ is generated by associated groups $G[P]$ with quasi-Galois points $P$ (Theorem \ref{simple}). 
The Klein quartic and the Wiman sextic are such examples of curves. 
According to a theorem of Harui \cite[Theorem 2.3]{harui}, smooth plane curves with $|{\rm Aut}(C)|>6d^2$ must be the Klein quartic or the Wiman sextic.  
On the other hand, we will obtain the standard form of the defining equation of curves with quasi-Galois points (Theorem \ref{standard form}).
By considering the standard form, we have that there exist quasi-Galois points for smooth plane curves of four types (i), (iii), (iv) and (v) with large automorphism groups in Harui's classification list \cite[Theorem 2.5]{harui}. 

This article contains thirteen new theorems. 
Theorems \ref{standard form}, \ref{inner}, \ref{outer} and \ref{d/2} are generalizations of Yoshihara--Miura's theorems \cite{miura-yoshihara1, yoshihara1} on Galois point. 
Theorem \ref{sextic} is a new characterization result, by virtue of the generalization as quasi-Galois points. 
Theorems \ref{generated}, \ref{simple}, \ref{sextic, generated} and \ref{Fermat, generated} are extensions of works of Kanazawa--Takahashi--Yoshihara \cite{kty} and Miura--Ohbuchi \cite{miura-ohbuchi}.  
Theorem \ref{Fermat, number} is a generalization of Yoshihara--Miura's result \cite{miura-yoshihara1, miura-yoshihara2} on Fermat curves. 
Theorem \ref{quartic} is the case of outer points compared to Takahashi's work \cite{takahashi}. 
Theorem \ref{2p^2} is a new result on the Galois closure of a projection for a smooth plane curve (cf. \cite{miura1, miura-yoshihara1, yoshihara1}). 
Theorem \ref{dual} adds new information to Fukasawa--Miura's work \cite{fukasawa-miura1, fukasawa-miura2}. 

Needless to say, the notions of Galois points for a hypersurface \cite{yoshihara2}, Galois lines for a space curve \cite{yoshihara3}, Galois subspaces, and Galois embeddings of an algebraic variety \cite{yoshihara4} are naturally generalized as {\it quasi-Galois points}, {\it quasi-Galois lines}, {\it quasi-Galois subspaces}, and {\it quasi-Galois embeddings}.  

\section{Preliminaries} 
We introduce the system $(X:Y:Z)$ of homogeneous coordinates on $\mathbb P^2$ with local coordinates $x=X/Z, y=Y/Z$ for the affine open set $Z \ne 0$. 
If $P \in C$ is a smooth point, the (projective) tangent line at $P$ is denoted by $T_PC$. 
For a projective line $\ell \subset \mathbb P^2$ and a point $P \in C \cap \ell$, the intersection multiplicity of $C$ and $\ell$ at $P$ is denoted by $I_P(C, \ell)$.  
The line passing through points $P$ and $Q$ is denoted by $\overline{PQ}$, when $P \ne Q$, and the projection from a point $P \in \mathbb P^2$ by $\pi_P$, which is the rational map from $C$ to $\mathbb P^1$ represented by $Q \mapsto \overline{PQ}$. 
If $Q \in C$ is a smooth point, the ramification index of $\pi_P$ at $Q$ is denoted by $e_Q$.   
We note the following elementary fact.  
 
\begin{fact} \label{index}
Let $P \in \mathbb P^2$, and let $Q \in C$ be a smooth point. 
Then, for $\pi_P$ we have the following.
\begin{itemize}
\item[(1)] If $P=Q$, then $e_P=I_P(C, T_PC)-1$.  
\item[(2)] If $P \ne Q$, then $e_Q=I_Q(C, \overline{PQ})$.   
\end{itemize} 
\end{fact}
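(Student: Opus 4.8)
The plan is to reduce both statements to a single valuation computation in the discrete valuation ring $\mathcal{O}_{C,Q}$ attached to the smooth point $Q$, using the description of $\pi_P$ by the pencil of lines through $P$. Concretely, I would fix two linearly independent linear forms $L_0, L_1$ vanishing at $P$, so that the lines through $P$ are exactly the zero loci of the members of the pencil $\langle L_0, L_1\rangle$ and $\pi_P$ is represented by $Q' \mapsto (L_0(Q'):L_1(Q'))$. The two ingredients I would invoke are standard facts at a smooth point: first, for any line $\ell=\{L=0\}$ the intersection multiplicity equals the order of the restriction, $I_Q(C,\ell)=v_Q(L)$, where $v_Q$ is the normalized valuation of $\mathcal{O}_{C,Q}$; second, the ramification index of a morphism to $\mathbb{P}^1$ at $Q$ equals the order of vanishing at $Q$ of $s\circ \pi_P$, where $s$ is any local coordinate on $\mathbb{P}^1$ vanishing at the image $\pi_P(Q)$.

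For part (2), with $P\ne Q$, I would choose $L_0$ to define the line $\overline{PQ}$ and $L_1$ to define any other line through $P$ not passing through $Q$; such a choice exists since $\overline{PQ}$ is the only member of the pencil through $Q$. Then $L_1(Q)\ne 0$, so $s:=L_0/L_1$ is a local coordinate on $\mathbb{P}^1$ vanishing at $\pi_P(Q)$, and $e_Q=v_Q(L_0/L_1)=v_Q(L_0)-v_Q(L_1)=v_Q(L_0)=I_Q(C,\overline{PQ})$, because $v_Q(L_1)=0$. This yields (2) immediately.

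For part (1), with $P=Q\in C$, the subtlety is that $P$ is a base point of the pencil, so $\pi_P$ must first be extended across $P$ on the smooth model. Here I would use the characterization of the tangent at a smooth point: among all lines through $P$, the tangent $T_PC$ is the unique one whose restriction lies in $\mathfrak{m}_P^2$, while every other line through $P$ restricts to a uniformizer, i.e. has valuation exactly $1$. Taking $L_1$ to define $T_PC$ and $L_0$ to define any other line through $P$, I obtain $v_P(L_0)=1$ and $v_P(L_1)=I_P(C,T_PC)=:m\ge 2$. Since the extended map sends $P$ to the point of $\mathbb{P}^1$ corresponding to $T_PC$ (the limit of secant directions $\overline{PQ'}$ as $Q'\to P$), the local coordinate vanishing at $\pi_P(P)$ is $s:=L_1/L_0$, whence $e_P=v_P(L_1/L_0)=v_P(L_1)-v_P(L_0)=m-1=I_P(C,T_PC)-1$.

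The step requiring the most care is part (1): one must justify that $\pi_P$ extends to a morphism of the smooth model with $\pi_P(P)=T_PC$, and correctly observe that the relevant coordinate is the inverse ratio $L_1/L_0$ rather than $L_0/L_1$ (which has a pole at $P$, since $v_P(L_0/L_1)=1-m<0$). This inversion is precisely what produces the shift by $-1$. The remaining inputs — the identification of intersection multiplicity with the valuation of a linear form, and the valuation-theoretic description of the ramification index — are routine at a smooth point, so once the pencil $\langle L_0,L_1\rangle$ is set up the computation is essentially immediate in both cases.
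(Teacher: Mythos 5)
Your proof is correct. The paper gives no proof of this statement at all---it is recorded as an elementary Fact and left to the reader---so there is nothing to compare against; your valuation-theoretic argument (identify $I_Q(C,\ell)$ with $v_Q(L|_C)$ at the smooth point $Q$, identify $e_Q$ with the order of vanishing of a pulled-back local coordinate, and choose the pencil generators $L_0,L_1$ adapted to each case) is exactly the standard verification such an omission presupposes. In particular you handle the one genuine subtlety properly: in part (1) the center $P$ is a base point of the pencil, the map extended over $P$ on the smooth model sends $P$ to the point of the pencil corresponding to $T_PC$ (the limit of secants), and the correct local coordinate at that image is $L_1/L_0$ rather than $L_0/L_1$, since $v_P(L_0/L_1)=1-m<0$; this inversion is precisely what yields $e_P=v_P(L_1/L_0)=m-1=I_P(C,T_PC)-1$, while in part (2) the choice $L_1(Q)\neq 0$ gives $e_Q=v_Q(L_0|_C)=I_Q(C,\overline{PQ})$ directly.
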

If a Galois covering $\theta:C \rightarrow C'$ between smooth curves is given, then the Galois group $G$ acts on $C$ naturally. 
The stabilizer subgroup of $P$ is denoted by $G(P)$. 
The following fact is useful (see \cite[III. 7.2, 8.2]{stichtenoth}). 
\begin{fact} \label{Galois covering} 
Let $\theta: C \rightarrow C'$ be a Galois covering of degree $d$, and let $G$ be the Galois group. 
Then, we have the following. 
\begin{itemize}
\item[(1)] The order of $G(P)$ is equal to $e_P$ at $P$ for any point $P \in C$.\item[(2)] Let $P, Q \in C$. If $\theta(P)=\theta(Q)$, then $e_P=e_Q$. 
\end{itemize} 
\end{fact}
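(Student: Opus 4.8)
The plan is to derive both assertions from the defining property of a Galois covering---that $C'$ is the quotient $C/G$, so that $G$ permutes each fibre of $\theta$ transitively---combined with the orbit--stabilizer theorem and the fundamental ramification identity, using that $K$ is algebraically closed.

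First I would record the transitivity: since $K(C)/K(C')$ is Galois with group $G$, the points of $C$ lying over a fixed point of $C'$ form a single $G$-orbit. Part (2) then follows at once: if $\theta(P)=\theta(Q)$, choose $\sigma\in G$ with $\sigma(P)=Q$; as $\sigma$ is an automorphism of $C$ satisfying $\theta\circ\sigma=\theta$, it induces an isomorphism of the local rings at $P$ and $Q$ compatible with $\theta$, whence $e_Q=e_{\sigma(P)}=e_P$. In particular all points in one fibre share a common ramification index.

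For part (1), fix $P$, write its fibre as $\theta^{-1}(\theta(P))=\{P_1,\dots,P_r\}$, and set $e:=e_{P_1}=\cdots=e_{P_r}$, the common value supplied by (2). Orbit--stabilizer applied to the transitive action gives $|G|=r\cdot|G(P)|$, so $|G(P)|=d/r$. On the other side, the fundamental identity for the degree-$d$ separable map $\theta$ reads $\sum_{i=1}^{r}e_{P_i}f_{P_i}=d$; since $K$ is algebraically closed every residue field equals $K$, so each $f_{P_i}=1$ and the identity reduces to $r\,e=d$. Comparing the two expressions gives $|G(P)|=d/r=e=e_P$.

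The delicate point is the case of wild ramification, where ${\rm char}\,K=p$ divides $e_P$. In valuation-theoretic terms $G(P)$ is the decomposition group at $P$; over the algebraically closed residue field it coincides with the inertia group, whose order is $e_P$ even in the wild case (the $p$-Sylow subgroup accounting for the wild part). I would emphasise that the orbit--stabilizer route above avoids any local analysis of the higher ramification filtration: it uses only transitivity and $\sum_i e_{P_i}=d$, both valid irrespective of the characteristic, which is precisely why this argument is preferable to a direct study of the local extension.
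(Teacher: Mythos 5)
Your proposal is correct, and it follows essentially the same route as the paper: the paper states this as a Fact without proof, citing Stichtenoth III.7.2 and III.8.2, and your argument---transitivity of $G$ on fibres, orbit--stabilizer giving $|G(P)|=d/r$, and the fundamental equality $\sum_i e_{P_i}f_{P_i}=d$ with all $f_{P_i}=1$ since $K$ is algebraically closed---is precisely the standard proof underlying those cited results, valid in any characteristic (the only ingredient you quote without proof, transitivity of the Galois group on the fibre, is Stichtenoth III.7.1 and is part of the same citation).
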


\section{Fundamental results}

We start with the following lemma, due to field theory. 
\begin{lemma} \label{subgroup} 
The set $G[P]$ is a subgroup of ${\rm Bir}(C)$ such that the order $|G[P]|$ divides the degree of $\pi_P$. 
\end{lemma}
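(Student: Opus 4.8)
The plan is to work entirely through the function-field characterization recorded in Remark~(1), namely $G[P]=\{\tau\in{\rm Bir}(C)\mid \pi_P\circ\tau=\pi_P\}$, since this description is the one that interacts cleanly with field theory. Recall that a birational self-map $\tau$ of $C$ induces a $K$-automorphism $\tau^{*}$ of $K(C)$, and that $\tau\mapsto\tau^{*}$ is an anti-isomorphism from ${\rm Bir}(C)$ onto ${\rm Aut}_K(K(C))$. Writing $F:=\pi_P^{*}K(\mathbb P^1)\subseteq K(C)$, the degree of the projection is $\deg\pi_P=[K(C):F]$, and the relation $(\pi_P\circ\tau)^{*}=\tau^{*}\circ\pi_P^{*}$ shows that $\pi_P\circ\tau=\pi_P$ holds exactly when $\tau^{*}$ restricts to the identity on $F$. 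Hence $G[P]$ corresponds under the anti-isomorphism to ${\rm Aut}(K(C)/F)$, the group of $F$-automorphisms of $K(C)$.

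For the subgroup assertion I would argue directly from $\pi_P\circ\tau=\pi_P$: the identity clearly lies in $G[P]$; if $\pi_P\circ\tau=\pi_P$ and $\pi_P\circ\sigma=\pi_P$, then $\pi_P\circ(\tau\circ\sigma)=\pi_P\circ\sigma=\pi_P$, giving closure; and composing $\pi_P\circ\tau=\pi_P$ on the right with $\tau^{-1}$ yields $\pi_P=\pi_P\circ\tau^{-1}$, giving inverses. Equivalently, one may simply note that ${\rm Aut}(K(C)/F)$ is visibly a subgroup of ${\rm Aut}_K(K(C))$ and transport this back across the anti-isomorphism.

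For the divisibility I would invoke Artin's theorem. Since $[K(C):F]<\infty$, one has $|{\rm Aut}(K(C)/F)|\le[K(C):F]<\infty$, so $G:={\rm Aut}(K(C)/F)$ is a finite group of automorphisms of $K(C)$. By Artin's theorem the extension $K(C)/K(C)^{G}$ of $K(C)$ over its fixed field $K(C)^{G}$ is a finite Galois extension with $[K(C):K(C)^{G}]=|G|$. Because $F\subseteq K(C)^{G}\subseteq K(C)$, multiplicativity of degrees gives
$$[K(C):F]=[K(C):K(C)^{G}]\cdot[K(C)^{G}:F]=|G|\cdot[K(C)^{G}:F],$$
so $|G[P]|=|G|$ divides $[K(C):F]=\deg\pi_P$. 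This step is valid in every characteristic, since Artin's theorem requires no separability hypothesis and the tower law holds regardless.

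The main obstacle is conceptual rather than computational: one must recognize that the geometric fiber-preserving condition defining $G[P]$ is precisely the assertion that $G[P]$ is the automorphism group of the subextension $F\subseteq K(C)$, after which the divisibility is exactly the classical fact that $|{\rm Aut}(M/F)|$ divides $[M:F]$ for a finite extension. Care is only needed to track the anti-isomorphism so that orders and the subgroup property transfer correctly.
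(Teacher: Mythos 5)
Your proof is correct and follows essentially the same route as the paper: the paper also verifies the subgroup property directly (leaving it as ``not difficult to check'') and obtains divisibility by observing that the fixed field $K(C)^{G[P]}$ is intermediate in $K(C)/\pi_P^*K(\mathbb P^1)$, which is precisely your Artin's-theorem-plus-tower-law argument. Your write-up merely makes explicit the anti-isomorphism ${\rm Bir}(C)\cong{\rm Aut}_K(K(C))$ and the identification $G[P]\cong{\rm Aut}(K(C)/F)$ that the paper leaves implicit.
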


\begin{proof} 
It is not difficult to check that $G[P]$ is a subgroup of ${\rm Bir}(C)$. 
Since the fixed field $K(C)^{G[P]}$ is an intermediate field of $K(C)/\pi_P^*K(\mathbb P^1)$, $|G[P]|$ divides $\deg \pi_P$. 
\end{proof}

\begin{example}
Let $p=0$ and $\deg \pi_P=4$. 
Then, $P$ is quasi-Galois with $|G[P]|=2$ if and only if $G_P$ is isomorphic to the dihedral group $D_8$ of order eight.
\end{example}

Let $G_0[P] \subset G[P]$ be the set of all elements of $G[P]$ which are the restrictions of some linear transformations of $\mathbb P^2$. 

\begin{definition}
A point $P$ is said to be extendable quasi-Galois if $|G_0[P]| \ge 2$. 
\end{definition}

\begin{remark}
If $C$ is smooth, then any automorphism is the restriction of a linear transformation (see \cite[Appendix A, 17 and 18]{acgh} or \cite{chang}). 
Therefore, every quasi-Galois point is extendable and $G[P]=G_0[P]$.  
\end{remark}

\begin{theorem}[cf. \cite{miura2, yoshihara1, yoshihara5}] \label{standard form} 
If $p=0$ or $p$ does not divide the order $|G_0[P]|$, then the group $G_0[P]$ is a cyclic group.  
Furthermore, for an integer $n \ge 2$, $n$ divides $|G_0[P]|$ if and only if there exists a linear transformation $\phi$ such that 
\begin{itemize}
\item[(1)] $\phi(P)=(1:0:0)$,  
\item[(2)] there exists an element $\sigma \in G_0[\phi(P)] \subset {\rm Bir}(\phi(C))$ which is represented by the matrix 
$$ A_{\sigma}=\left(\begin{array}{ccc} \zeta & 0 & 0 \\ 0 & 1 & 0 \\
0 & 0 & 1 \end{array} \right), $$
where $\zeta$ is a primitive $n$-th roof of unity, and 
\item[(3)] $\phi(C)$ is given by 
$$ \sum_i G_{d-n i}(Y, Z)X^{n i}=0, $$
where $G_{d-n i}$ is a homogeneous polynomial of degree $d-n i$ in variables $Y, Z$. 
\end{itemize}
\end{theorem}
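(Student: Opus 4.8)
The plan is to realize $G_0[P]$ as a finite subgroup of an explicit subgroup of ${\rm PGL}_3(K)$ and then to read off both the cyclicity and the normal form from the structure of that subgroup. First I would choose homogeneous coordinates so that $P=(1:0:0)$, so that $\pi_P$ is the map $(X:Y:Z)\mapsto(Y:Z)$. Since $C$ is irreducible of degree $d\ge 4$ it is not contained in a line, hence spans $\mathbb P^2$, so a linear transformation is determined by its restriction to $C$; thus each $\tau\in G_0[P]$ extends to a \emph{unique} linear transformation $\tilde\tau$, and $\tau\mapsto\tilde\tau$ is an injective homomorphism. The defining condition $\pi_P\circ\tau=\pi_P$ (Remark, part (1)) forces $\tilde\tau$ to carry every line through $P$ to itself, and writing this out shows that, after normalization, $\tilde\tau$ has matrix
$$\tilde\tau=\begin{pmatrix} a & b & c \\ 0 & 1 & 0 \\ 0 & 0 & 1 \end{pmatrix},\qquad a\in K^{*},\ b,c\in K.$$
Call this subgroup $H$; then $G_0[P]$ is a finite subgroup of $H$, of order dividing $\deg\pi_P$ by Lemma \ref{subgroup}.

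Next I would prove cyclicity. Sending such a matrix to its entry $a$ defines a homomorphism $H\to K^{*}$ whose kernel $N$ consists of the translations $(b,c)\in K^{2}$ and is thus isomorphic to $(K^{2},+)$. The image of $G_0[P]$ in $K^{*}$ is a finite subgroup, hence cyclic. If $p=0$ then $N$ is torsion-free, while if $p>0$ every nonzero element of $N$ has order $p$; in either case the hypothesis ($p=0$ or $p\nmid|G_0[P]|$) forces $G_0[P]\cap N=\{1\}$. Therefore $G_0[P]$ injects into $K^{*}$ and is cyclic, proving the first assertion.

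For the normal form, the implication ($\Leftarrow$) is immediate: the matrix $A_{\sigma}={\rm diag}(\zeta,1,1)$ has order exactly $n$ in ${\rm PGL}_3(K)$, so $\sigma$ has order $n$, and since conjugation by the linear map $\phi$ identifies $G_0[P]$ with $G_0[\phi(P)]$ we get $n\mid|G_0[\phi(P)]|=|G_0[P]|$. For ($\Rightarrow$), cyclicity yields an element $\sigma\in G_0[P]$ of order exactly $n$; its extension lies in $H$ and has eigenvalues $\zeta,1,1$ with $\zeta$ a primitive $n$-th root of unity, and because $\zeta\ne1$ the matrix is diagonalizable with $P=(1:0:0)$ as the $\zeta$-eigenvector. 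I would take $\phi$ to be the diagonalizing coordinate change: it can be chosen to fix $(1:0:0)$ and to conjugate the matrix of $\sigma$ into $A_{\sigma}={\rm diag}(\zeta,1,1)$, which gives conditions (1) and (2).

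Finally, condition (3) follows from invariance of the defining polynomial. Writing $F=\sum_{j}F_{d-j}(Y,Z)X^{j}$ for $\phi(C)$, invariance under $\sigma$ (whose matrix scales $X$ by $\zeta$ and fixes $Y,Z$) gives $F(\zeta X,Y,Z)=\lambda F$, so every exponent $j$ with $F_{d-j}\ne0$ lies in a single residue class $j_0$ modulo $n$. If $j_0\ne0$ then $X^{j_0}$, and hence $X$, would divide $F$, contradicting the irreducibility of $F$ (whose degree is $d\ge4$); thus $j_0=0$, only exponents $j=ni$ occur, and $F=\sum_i G_{d-ni}(Y,Z)X^{ni}$, which is the stated form. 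The main obstacle I anticipate is the bookkeeping around the linear extensions — establishing uniqueness of $\tilde\tau$ and checking that the diagonalizing change of coordinates may be taken to fix $P$ and to carry $G_0[P]$ onto $G_0[\phi(P)]$ — together with the characteristic-$p$ argument that kills the translation subgroup $N$.
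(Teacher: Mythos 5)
Your proposal is correct and takes essentially the same route as the paper: after normalizing $P=(1:0:0)$ you reduce each element of $G_0[P]$ to a matrix of the form $\left(\begin{smallmatrix} a & b & c\\ 0 & 1 & 0\\ 0 & 0 & 1\end{smallmatrix}\right)$, use the characteristic hypothesis to kill the unipotent (translation) part, embed $G_0[P]$ into $K^{*}$ to get cyclicity, diagonalize an order-$n$ element by a coordinate change fixing $P$, and read off the equation from semi-invariance of the defining polynomial. Your minor departures --- packaging the unipotent kernel as $(K^2,+)$ with a Lagrange-order argument instead of the paper's explicit computation $A_\sigma^N=I \Rightarrow Na_{12}=Na_{13}=0$, and deducing that the exponent class is $0$ from $X\nmid F$ by irreducibility rather than the paper's normalization $a_0(y)\neq 0$ --- are cosmetic rephrasings of the same steps.
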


\begin{proof} 
We can assume that $P=(1:0:0)$. 
The projection $\pi_P$ is given by $(x:y:1) \mapsto (y:1)$. 
We have a field extension $K(x, y)/K(y)$. 
Let $\sigma \in G_0[P]$, and let $A_{\sigma}=(a_{ij})$ be a matrix representing $\sigma$, as $\sigma: (X:Y:Z) \mapsto (X:Y:Z)\ ^tA_{\sigma}$. 
Since $\sigma^*(y)=y$, 
$$ (a_{21}x+a_{22}y+a_{23})-(a_{31}x+a_{32}y+a_{33})y=0 $$ 
in $K(C)=K(x,y)$. 
Since $d \ge 4$, we have $a_{21}=a_{23}=a_{31}=a_{32}=0$ and $a_{22}=a_{33}$. 
We take the representative matrix with $a_{22}=a_{33}=1$. 
Let $N=|G_0[P]|$. 
If $a_{11}=1$, then, by $A_{\sigma}^N=1$, we have $N a_{12}=N a_{13}=0$. 
Since $N$ is not divisible by $p$ if $p>0$, $a_{12}=a_{13}=0$. 
Then, we have an injective homomorphism 
$$ G_0[P] \hookrightarrow K\setminus 0; \ \sigma \mapsto a_{11}(\sigma), $$
where $a_{11}(\sigma)$ is the $(1,1)$-element of $A_{\sigma}$. 
Therefore, $G_0[P]$ is a cyclic group.  

By the assumption, the order of any element of $|G_0[P]|$ is not divisible by $p$ if $p>0$. 
Assume that $n \ge 2$ divides $|G_0[P]|$. 
Since $G_0[P]$ is a cyclic group, there exists an element $\sigma \in G_0[P]$ of order $n$ and $\sigma$ is represented by the matrix 
$$ A_{\sigma}=\left(\begin{array}{ccc} \zeta & a & b \\ 0 & 1 & 0 \\ 0 & 0 & 1 \end{array}\right), $$
where $a, b \in K$ and $\zeta$ is a primitive $n$-th root of unity. 
If we take 
$$ B=\left(\begin{array}{ccc} 1 & a & b \\ 0 & 1-\zeta & 0 \\ 0 & 0 & 1-\zeta \end{array}\right), $$
then 
$$ B^{-1}A_{\sigma}B=\left(\begin{array}{ccc} \zeta & 0 & 0 \\ 0 & 1 & 0 \\ 0 & 0 & 1 \end{array}\right).  $$
We take the linear transformation given by $(X:Y:Z) \mapsto (X:Y:Z) \ ^tB^{-1}$, so that we have assertion (2). 
Let $f(x,y)=\sum_{i}a_i(y)x^i$ be a defining polynomial with $a_0(y) \ne 0$. 
Then, $\sigma^*f=\sum_ia_i(y)\zeta^ix^i$. 
There exists $c \in K$ such that $cf=\sigma^*f$. 
Since $a_0(y) \ne 0$, we have $c=1$. 
For all $i$, $a_i(y)=a_i(y)\zeta^i$. 
If $a_i(y) \ne 0$, then $\zeta^i=1$. 
This implies that $n$ divides $i$. 
We have assertion (3). 

The if-part is obvious. 
\end{proof} 

\begin{corollary} \label{fixed locus} 
For $\sigma \in G_0[P] \setminus \{1\}$, we define $F[P]:=\{Q \in \mathbb P^2 \ | \  \sigma(Q)=Q \}$. 
If we use the standard form as in Theorem \ref{standard form}, $F[P]=\{P\} \cup \{X=0\}$. 
In particular, the set $F[P]$ does not depend on $\sigma$. 
\end{corollary}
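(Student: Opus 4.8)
The plan is to work entirely in the standard coordinates furnished by Theorem \ref{standard form}, where $P=(1:0:0)$ and the cyclic group $G_0[P]$ has been put into diagonal form. The first step is to record what an arbitrary element of $G_0[P]$ looks like in these coordinates. By Theorem \ref{standard form} the group $G_0[P]$ is cyclic, and the chosen standard form diagonalizes a generator $\sigma_0$ to the matrix $\mathrm{diag}(\zeta,1,1)$ with $\zeta$ a primitive $|G_0[P]|$-th root of unity. Since every element is a power $\sigma_0^k$, it is represented by the diagonal matrix $\mathrm{diag}(\zeta^k,1,1)$; here one uses that conjugation by a matrix diagonalizing $\sigma_0$ simultaneously diagonalizes the whole cyclic group. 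In particular, any $\sigma\in G_0[P]\setminus\{1\}$ is represented by $\mathrm{diag}(\lambda,1,1)$ for some $\lambda\ne 1$, and this already foreshadows the ``independence of $\sigma$'' assertion, since only the value of $\lambda\ne 1$ will vary.

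The second step is the elementary fixed-point computation. A point $Q=(X:Y:Z)$ lies in $F[P]$ precisely when $(\lambda X:Y:Z)=(X:Y:Z)$ as points of $\mathbb P^2$, i.e. when there is a scalar $c\in K\setminus\{0\}$ with $\lambda X=cX$, $Y=cY$, $Z=cZ$. I would split into the possible values of $c$. If $c=1$, then $(\lambda-1)X=0$; as $\lambda\ne 1$ this forces $X=0$, and conversely every point $(0:Y:Z)$ is fixed, giving the whole line $\{X=0\}$. If $c\ne 1$, then $Y=cY$ and $Z=cZ$ force $Y=Z=0$, so $Q=(1:0:0)=P$, and this point is indeed fixed (with $c=\lambda$). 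No other value of $c$ can occur, since $\lambda\ne 1$ means $\lambda$ and $1$ are the only eigenvalues. Combining the two cases yields $F[P]=\{P\}\cup\{X=0\}$.

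Because the computation used only that $\lambda\ne 1$ and never the precise value of $\lambda$, the resulting set is the same for every choice of non-trivial $\sigma$, which is exactly the final clause of the statement.

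As for difficulty, there is essentially no obstacle here: the corollary is an immediate consequence of the diagonal normal form established inside the proof of Theorem \ref{standard form}, and the only point requiring genuine care is to invoke the cyclicity of $G_0[P]$ so that the argument covers an \emph{arbitrary} non-trivial $\sigma$ and not merely the distinguished generator. One should also keep in mind that, exactly as in Theorem \ref{standard form}, everything is carried out under the hypothesis $p=0$ or $p\nmid|G_0[P]|$, which is what guarantees the diagonalizable (indeed cyclic) structure that the whole argument rests on.
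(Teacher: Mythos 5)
Your proof is correct and is precisely the verification the paper intends: the corollary is stated without proof as an immediate consequence of Theorem \ref{standard form}, and your argument (all nontrivial elements of the cyclic group $G_0[P]$ become $\mathrm{diag}(\lambda,1,1)$ with $\lambda\neq 1$ in the standard coordinates, whose fixed locus is $\{P\}\cup\{X=0\}$ independently of $\lambda$) is exactly that consequence spelled out. Your closing remarks on cyclicity being needed to cover an arbitrary $\sigma$, and on the hypothesis $p=0$ or $p\nmid|G_0[P]|$, are also the right points of care.
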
 

\begin{corollary} \label{two groups}
Let $P_1, P_2 \in \mathbb{P}^2$. 
If $P_1 \ne P_2$, then $G_0[P_1] \cap G_0[P_2]=\{1\}$. 
\end{corollary}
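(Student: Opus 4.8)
The plan is to argue by contradiction, exploiting the fact that an element of $G_0[P]$ is by definition the restriction of a linear transformation of $\mathbb P^2$, so it genuinely acts on the whole plane and has a well-defined fixed-point locus there. Suppose $\sigma \in G_0[P_1] \cap G_0[P_2]$ with $\sigma \neq 1$; the goal is to force $P_1 = P_2$. The key observation is that the set $F[P] = \{Q \in \mathbb P^2 \ | \ \sigma(Q) = Q\}$ of Corollary \ref{fixed locus} is literally the fixed-point locus of $\sigma$ as a linear transformation, hence an intrinsic invariant of $\sigma$; it does not depend on whether we regard $\sigma$ as an element of $G_0[P_1]$ or of $G_0[P_2]$.

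First I would apply Corollary \ref{fixed locus} to $P_1$: choosing coordinates so that the standard form of Theorem \ref{standard form} holds at $P_1$ and transporting the normalized conclusion $\{(1:0:0)\} \cup \{X=0\}$ back by $\phi^{-1}$, the fixed-point locus of $\sigma$ equals $\{P_1\} \cup \ell_1$, where $\ell_1$ is a line with $P_1 \notin \ell_1$. Applying the same corollary to $P_2$ shows that this same fixed-point locus also equals $\{P_2\} \cup \ell_2$ with $P_2 \notin \ell_2$. Since both descriptions refer to the single fixed-point set of the one transformation $\sigma$, I obtain the equality of sets $\{P_1\} \cup \ell_1 = \{P_2\} \cup \ell_2$.

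The remaining step is to recover $P$ and $\ell$ from such a configuration uniquely. In a set of the form $\ell \cup \{P\}$ with $P \notin \ell$, the line $\ell$ is the unique line contained in the set: any other line would meet $\ell$ in at most one point and therefore contribute infinitely many points lying off $\ell$, which is impossible since only the single point $P$ lies off $\ell$. Hence $\ell_1 = \ell_2 =: \ell$, and then each $P_i$ is the unique point of the common set not on $\ell$, so $P_1 = P_2$, contradicting the hypothesis $P_1 \neq P_2$. This yields $G_0[P_1] \cap G_0[P_2] = \{1\}$. The main point to get right is the identification in the second step, namely that Corollary \ref{fixed locus}, stated in standard coordinates at a single point, is being applied to describe the fixed locus of the \emph{same} concrete transformation $\sigma$ at both $P_1$ and $P_2$; once that identification is secured, the uniqueness of the ``line-plus-one-point'' decomposition is elementary and delivers the contradiction.
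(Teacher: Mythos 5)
Your proof is correct and is exactly the argument the paper intends: the corollary is stated without proof as an immediate consequence of Corollary \ref{fixed locus}, the point being that the fixed locus of a nontrivial $\sigma\in G_0[P]$ is intrinsic to $\sigma$ and has the form $\{P\}\cup\ell$ with $P\notin\ell$, so the isolated point recovers $P$ uniquely. Your explicit verification that the ``line plus one off-line point'' decomposition is unique just makes the omitted one-line deduction rigorous.
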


\begin{example}[Miura, Remark 1 in \cite{yoshihara1}] 
If we do not assume that $C$ is smooth, there exists an example with $1 < |G_0[P]| < |G[P]|$. 
For example, if $p=0$, $C$ is defined by
$$ Y(X^2+Y^2)^n+X^{n+1}Z^n+Y^{n+1}Z^n+YZ^{2n}=0, $$
and $P=(0:0:1)$, then $|G_0[P]|=n$, and $G[P]=G_P$ is the dihedral group of order $2n$. 
\end{example}

\begin{proposition} \label{Galois closure}
Assume that $p=0$ and $P=(1:0:0)$ is extendable quasi-Galois with $|G_0[P]|=n \ge 2$. 
Then, $C$ is given by $f(x^n, y)=0$  in the affine plane $Z \ne 0$ for some polynomial $f(X, y)$. 
Let $r$ be the degree of $f(X, y)$ in variable $X$, and let $R$ be the degree of the Galois closure of $K(X, y)/K(y)$. 
Then, $$ n \times r \le |G_P| \le R \times n^r. $$
In particular, 
$$ n \le |G_P| \le r! \times n^r. $$
\end{proposition}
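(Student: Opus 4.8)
The plan is to extract the structure of $C$ from the standard form and then carry out elementary field theory in the tower $K(y) \subset K(x^n,y) \subset K(x,y)$. First I would invoke Theorem \ref{standard form}: since $p=0$ and $P=(1:0:0)$ is extendable quasi-Galois with $|G_0[P]|=n$, the group $G_0[P]$ is cyclic and, after a linear change of coordinates fixing $P$, the curve takes the form $\sum_i G_{d-ni}(Y,Z)X^{ni}=0$. Dehomogenizing with $x=X/Z$, $y=Y/Z$ gives precisely $f(x^n,y)=0$ with $f(X,y)=\sum_i G_{d-ni}(y,1)X^i$, whose degree in the first variable is $r$. Setting $u=x^n$, I record the tower $K(y)\subset K(u,y)\subset K(x,y)=K(C)$. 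By Gauss's lemma the bivariate defining polynomial, being irreducible, remains irreducible over $K(y)$; and since any factorization of $f(U,y)$ would induce one of $f(T^n,y)$, both are irreducible, so $[K(u,y):K(y)]=r$. The diagonal generator $\sigma$ of Theorem \ref{standard form} sends $x\mapsto \zeta x$, $y\mapsto y$, hence fixes $u$ and $y$ and has order $n$, making $K(x,y)/K(u,y)$ cyclic of degree $n$; therefore $[K(x,y):K(y)]=nr=\deg\pi_P$. The lower bound is then immediate: the Galois closure $L_P$ contains $K(x,y)$, so $|G_P|=[L_P:K(y)]\ge[K(x,y):K(y)]=nr\ge n$.

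For the upper bound the key point is that $L_P$ is the splitting field over $K(y)$ of the minimal polynomial $F(T)=f(T^n,y)$ of the primitive element $x$. If $u_1,\dots,u_r$ are the roots of $f(U,y)$ and $M=K(y)(u_1,\dots,u_r)$ is their splitting field — which is exactly the Galois closure of $K(u,y)/K(y)$, of degree $R$ — then the roots of $F$ are the elements $\zeta^k\alpha_j$ with $\alpha_j^n=u_j$ and $0\le k<n$. Because $p=0$ and $K$ is algebraically closed, $\zeta\in K\subset K(y)$, so adjoining a single $n$-th root $\alpha_j$ already yields all of its $\zeta$-multiples; hence $L_P=M(\alpha_1,\dots,\alpha_r)$. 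Each of the $r$ successive steps adjoins a root of $T^n-u_j$ and so has relative degree at most $n$, giving $[L_P:M]\le n^r$ and therefore $|G_P|=[L_P:M]\,[M:K(y)]\le R\times n^r$. The two sharper bounds follow at once: $r\ge 1$ gives $n\le nr\le|G_P|$, while $\mathrm{Gal}(M/K(y))$ acts faithfully on $\{u_1,\dots,u_r\}$ and hence embeds in $S_r$, forcing $R\le r!$ and $|G_P|\le r!\times n^r$.

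The hard part will be the bookkeeping in the upper bound: one must argue cleanly that the Galois closure of $K(x,y)/K(y)$ is precisely the compositum $M(\alpha_1,\dots,\alpha_r)$ — that is, identify all conjugates of $x$ as exactly the $n$-th roots of the conjugates of $u$ — and justify that adjoining one root $\alpha_j$ suffices at each stage, which is where the presence of $\zeta$ in the base field is essential. The remaining ingredients (the tower degrees, irreducibility via Gauss's lemma, and the embedding of $\mathrm{Gal}(M/K(y))$ into $S_r$) are routine.
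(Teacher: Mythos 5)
Your proposal is correct and follows essentially the same route as the paper: the paper's proof builds exactly your tower, taking $x_1=x^n,x_2,\dots,x_r$ to be the roots of $f(X,y)$ and exhibiting $K(y,x,\sqrt[n]{x_2},\dots,\sqrt[n]{x_r})$ as the Galois closure of $K(x,y)/K(y)$, then bounding $|G_P|$ by $[K(x,y):K(y)]=nr$ from below and by $R\cdot n^r$ from above via the successive degree-$\le n$ adjunctions over the closure $K(y,x^n,x_2,\dots,x_r)$ of degree $R$. You merely make explicit the details the paper leaves implicit (irreducibility via Gauss's lemma, that $\zeta\in K(y)$ makes one $n$-th root per conjugate suffice, and the embedding of $\mathrm{Gal}(M/K(y))$ into $S_r$ giving $R\le r!$).
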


\begin{proof}
Let $x_1(=x^n), x_2, \ldots, x_r$ be the roots of $f(X, y)$. 
Then, we have the following diagram.  
\begin{equation*} 
\xymatrix{K(x,y) \ar@{-}[r] \ar@{-}[d] & K(y, x, \sqrt[n]{x_2}, \ldots, \sqrt[n]{x_r}) \ar@{-}[d] \\
K(x^n, y) \ar@{-}[d] \ar@{-}[r] & K(y, x^n, x_2, \ldots, x_r) \ar@{-}[ld] \\
K(y) & }
\end{equation*} 
Then, the field $K(y, x, \sqrt[n]{x_2}, \ldots, \sqrt[n]{x_r})$ becomes the Galois closure of $K(x,y)/K(y)$. 
Since $[K(x,y):K(y)]=n \times r$ and $[K(y, x, \sqrt[n]{x_2}, \ldots, \sqrt[n]{x_r}):K(y, x^n, x_2, \ldots, x_r)] \le n^r$, we have the conclusion. 
\end{proof}

If $C$ is smooth, we define 
$$G_n(C)=\langle G[P] \ | \ P: \mbox{ quasi-Galois with } |G[P]|=n \rangle \subset {\rm Aut}(C) $$
after Kanazawa--Takahashi--Yoshihara \cite{kty} and Miura--Ohbuchi \cite{miura-ohbuchi}. 
Similar to \cite[Theorem 1]{miura-ohbuchi}, we have the following. 

\begin{theorem} \label{generated}
Let $C$ be smooth. 
Then, $G_n(C)$ is a normal subgroup of ${\rm Aut}(C)$. 
\end{theorem}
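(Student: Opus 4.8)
The plan is to prove normality directly from the generating property: it suffices to show that conjugation by an arbitrary $\tau \in {\rm Aut}(C)$ carries each generating subgroup $G[P]$ (with $P$ quasi-Galois and $|G[P]|=n$) into $G_n(C)$, since $\tau G_n(C) \tau^{-1} = \langle \tau G[P] \tau^{-1} \rangle$ and applying this to both $\tau$ and $\tau^{-1}$ yields $\tau G_n(C)\tau^{-1}=G_n(C)$. Because $C$ is smooth, by the Remark following the definition of $G_0[P]$ every element of ${\rm Aut}(C)$ is the restriction of a linear transformation of $\mathbb P^2$ and $G[P]=G_0[P]$; I would fix $\tau$ and let $\tilde\tau$ denote its linear extension, so that $\tilde\tau(P)$ is again a point of $\mathbb P^2$.

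The one geometric input I would establish first is that $\tau$ intertwines the two projections. Since $\tilde\tau$ maps lines to lines, it sends the line $\overline{PQ}$ to $\overline{\tilde\tau(P)\,\tau(Q)}$, hence carries the pencil of lines through $P$ isomorphically onto the pencil of lines through $\tilde\tau(P)$. Writing $h$ for the induced automorphism of $\mathbb P^1$, this gives the relation
\[ \pi_{\tilde\tau(P)} \circ \tau = h \circ \pi_P, \qquad \text{equivalently} \qquad \pi_{\tilde\tau(P)} = h \circ \pi_P \circ \tau^{-1}. \]
Now take any $\sigma \in G[P]$, so that $\pi_P \circ \sigma = \pi_P$ by Remark (1). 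I would then compute
\[ \pi_{\tilde\tau(P)} \circ (\tau \sigma \tau^{-1}) = (h \circ \pi_P) \circ \sigma \circ \tau^{-1} = h \circ (\pi_P \circ \sigma) \circ \tau^{-1} = h \circ \pi_P \circ \tau^{-1} = \pi_{\tilde\tau(P)}, \]
which shows $\tau\sigma\tau^{-1} \in G[\tilde\tau(P)]$ and hence $\tau G[P]\tau^{-1} \subseteq G[\tilde\tau(P)]$. Running the same argument with $\tau^{-1}$ in place of $\tau$ (and $\tilde\tau(P)$ in place of $P$) gives the reverse inclusion, so $\tau G[P]\tau^{-1}=G[\tilde\tau(P)]$.

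With the equality $\tau G[P]\tau^{-1}=G[\tilde\tau(P)]$ in hand, the orders agree: $|G[\tilde\tau(P)]|=|G[P]|=n$, so $\tilde\tau(P)$ is itself a quasi-Galois point with $|G[\tilde\tau(P)]|=n$, i.e. $G[\tilde\tau(P)]$ is one of the subgroups generating $G_n(C)$. Therefore $\tau G[P]\tau^{-1}\subseteq G_n(C)$ for every generator $G[P]$ and every $\tau \in {\rm Aut}(C)$, and the reduction in the first paragraph completes the proof that $G_n(C)$ is normal. The argument is essentially formal once the two ingredients are secured, and the only real obstacle is justifying the intertwining identity $\pi_{\tilde\tau(P)}\circ\tau = h\circ\pi_P$ with $h$ a genuine automorphism of $\mathbb P^1$; this is precisely where smoothness is indispensable, since it guarantees that $\tau$ extends to the linear map $\tilde\tau$ and thus acts compatibly on the pencils of lines rather than merely as a birational map.
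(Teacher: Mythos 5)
Your proposal is correct, and it is essentially the argument the paper intends: the paper gives no written proof, deferring to the analogous result of Miura--Ohbuchi, whose standard proof is exactly your conjugation argument that $\tau G[P]\tau^{-1}=G[\tilde\tau(P)]$ (so conjugation permutes the generating subgroups, preserving the order $n$), which forces $G_n(C)$ to be normal. Your careful justification of the intertwining identity $\pi_{\tilde\tau(P)}\circ\tau=h\circ\pi_P$ via the linear extension of $\tau$ supplies precisely the detail the paper leaves implicit.
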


On the other hand, we have the following. 

\begin{proposition} \label{involution}
Let $p \ne 2$, and let $C$ be smooth. 
The following conditions are equivalent. 
\begin{itemize}
\item[(1)] The order $|{\rm Aut}(C)|$ is even. 
\item[(2)] The group ${\rm Aut}(C)$ contains an involution. 
\item[(3)] There exists a quasi-Galois point $P$ such that $|G[P]|$ is even. 
\end{itemize} 
\end{proposition}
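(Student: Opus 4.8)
The plan is to prove the equivalences by first disposing of the elementary parts, $(1)\Leftrightarrow(2)$ and $(3)\Rightarrow(2)$, and then concentrating on the one implication that carries geometric content, $(2)\Rightarrow(3)$. To begin, I would record that $\mathrm{Aut}(C)$ is a finite group: since $C$ is a smooth plane curve of degree $d\ge 4$ its genus is $(d-1)(d-2)/2\ge 3$, so the automorphism group is finite. Then $(1)\Leftrightarrow(2)$ is immediate from Cauchy's theorem, which says that a finite group has even order if and only if it contains an element of order two. For $(3)\Rightarrow(2)$, if $P$ is quasi-Galois with $|G[P]|$ even, then smoothness of $C$ gives $G[P]=G_0[P]\subset\mathrm{Aut}(C)$, a finite subgroup of even order; applying Cauchy's theorem inside $G[P]$ yields an involution, which of course lies in $\mathrm{Aut}(C)$.

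The heart of the argument is $(2)\Rightarrow(3)$. Given an involution $\sigma\in\mathrm{Aut}(C)$, I would use that $C$ is smooth to realize $\sigma$ as the restriction of a linear transformation of $\mathbb P^2$, represented by a matrix $A$ with $A^2$ a scalar matrix. Since $p\ne 2$, such an involution of $\mathbb P^2$ is diagonalizable, and after rescaling its eigenvalues are $\pm 1$. As $\sigma\ne 1$, exactly two distinct eigenvalues occur, one with multiplicity one and one with multiplicity two; thus the fixed locus of $\sigma$ in $\mathbb P^2$ consists of an isolated point $P$ (the eigenspace of the simple eigenvalue) together with a line $\ell_0$ not passing through $P$ (the eigenspace of the double eigenvalue). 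Concretely I would choose coordinates so that $P=(1:0:0)$ and $\ell_0=\{X=0\}$, in which case $\sigma$ is represented by $\mathrm{diag}(-1,1,1)$, which is precisely the standard form of Theorem \ref{standard form} with $n=2$.

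It then remains to verify that this $P$ is the desired quasi-Galois point. The key observation is that $\sigma$ preserves every line through $P$: a general line $\ell\ni P$ meets $\ell_0$ in a single point $Q$, which is fixed by $\sigma$, so $\sigma(\ell)$ is the line joining $\sigma(P)=P$ and $\sigma(Q)=Q$, namely $\ell$ itself; equivalently, for $\pi_P\colon(X:Y:Z)\mapsto(Y:Z)$ one checks directly that $\pi_P\circ\sigma=\pi_P$. By the characterization $G[P]=\{\tau\in\mathrm{Bir}(C)\mid\pi_P\circ\tau=\pi_P\}$ this gives $\sigma\in G[P]$, and since $\sigma$ has order two we conclude $|G[P]|\ge 2$ (so $P$ is quasi-Galois) and $|G[P]|$ is even, as required. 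The main obstacle I anticipate is not computational but structural: it lies in securing the clean $\pm 1$ normal form for $\sigma$, which is exactly where the hypothesis $p\ne 2$ is used, since in characteristic two an involution of $\mathbb P^2$ need not be diagonalizable and its fixed locus can degenerate. Beyond that the verification is bookkeeping, as $\deg\pi_P\ge d-1\ge 3$ automatically whether or not $P\in C$, so $G[P]$ genuinely has room to contain the involution by Lemma \ref{subgroup}.
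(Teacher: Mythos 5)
Your proposal is correct and follows essentially the same route as the paper: the content of both proofs is the implication (2) $\Rightarrow$ (3), handled by representing the involution by a matrix with square the identity, using $p \ne 2$ to diagonalize it with eigenvalues $\pm 1$, and reading off the quasi-Galois point from the resulting standard form $\mathrm{diag}(-1,1,1)$. Your only deviations are cosmetic: you invoke Cauchy's theorem where the paper uses a Sylow $2$-subgroup, you close the cycle via (3) $\Rightarrow$ (2) rather than (3) $\Rightarrow$ (1), and you verify $\pi_P \circ \sigma = \pi_P$ directly instead of citing the if-part of Theorem \ref{standard form}.
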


\begin{proof} 
If $|{\rm Aut}(C)|$ is even, then a Sylow $2$-group contains an element of order two. 
Therefore, we have (1) $\Rightarrow$ (2).
Since $G[P]$ is a subgroup of ${\rm Aut}(C)$ as in Lemma \ref{subgroup}, the assertion (3) $\Rightarrow$ (1) is obvious. 

We prove (2) $\Rightarrow$ (3). 
Let $\sigma \in {\rm Aut}(C)$ be an involution, and let $A_{\sigma}$ be a matrix representing $\sigma$. 
Since $A_{\sigma}^2=\lambda I_3$ for some $\lambda \ne 0$, where $I_3$ is the identity matrix, we can assume that $A_{\sigma}^2=I_3$.  
The eigenvalues of $A_{\sigma}$ are $1$ or $-1$. 
For any vector $x \in K^3$, $(A-E)x$ and $(A+E)x$ are contained in the direct sum of eigenspaces, since $(A+E)(A-E)=(A-E)(A+E)=0$. 
Then, $2x=-(A-E)x+(A+E)x$ also. 
By the assumption $p \ne 2$, the direct sum of eigenspaces spans the vector space $K^3$. 
We find that $A_{\sigma}$ is diagonalizable. 
For a suitable system of coordinates, we can assume that $A_{\sigma}$ is one of the following matrices: 
$$ \pm\left(\begin{array}{ccc} -1 & 0 & 0 \\
0 & 1 & 0 \\
0 & 0 & 1 
\end{array}\right), \ 
\pm\left(\begin{array}{ccc} 1 & 0 & 0 \\
0 & -1 & 0 \\
0 & 0 & 1 
\end{array}\right), \ 
\pm\left(\begin{array}{ccc} 1 & 0 & 0 \\
0 & 1 & 0 \\
0 & 0 & -1 
\end{array}\right). $$
It follows from Theorem \ref{standard form} that $(1:0:0)$, $(0:1:0)$ and $(0:0:1)$ are quasi-Galois, for each case. 
Then, $|G[P]|$ is even. 
\end{proof}

Combining Theorem \ref{generated} and Proposition \ref{involution}, we have the following. 

\begin{theorem} \label{simple}
Let $p \ne 2$, and let $C$ be smooth. 
If ${\rm Aut}(C)$ is simple and $|{\rm Aut}(C)|$ is even, then ${\rm Aut}(C)=G_n(C)$ for some even integer $n \ge 2$. 
For example, if $p=0$, and $C$ is the Klein quartic or the Wiman sextic (see \cite{dik,  wiman} and \cite[Remark 2.4]{harui}), then ${\rm Aut}(C)=G_2(C)$.   
\end{theorem}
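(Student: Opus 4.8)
The plan is to combine the two previously established results, Theorem \ref{generated} and Proposition \ref{involution}, with the defining property of a simple group. First I would invoke Proposition \ref{involution}: since $p \ne 2$, $C$ is smooth, and $|{\rm Aut}(C)|$ is even by hypothesis, condition (1) of that proposition holds, hence condition (3) holds. This gives the existence of a quasi-Galois point $P$ with $|G[P]|$ even. Setting $n := |G[P]|$, this $n \ge 2$ is even and the subgroup $G_n(C) = \langle G[Q] \mid Q \text{ quasi-Galois with } |G[Q]|=n \rangle$ contains $G[P]$, which is nontrivial. Therefore $G_n(C) \ne \{1\}$.

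Next I would apply Theorem \ref{generated}, which asserts that $G_n(C)$ is a normal subgroup of ${\rm Aut}(C)$. Now the decisive step uses simplicity: a simple group has no proper nontrivial normal subgroups, so any normal subgroup is either trivial or the whole group. Since we have shown $G_n(C)$ is normal and nontrivial, we conclude $G_n(C) = {\rm Aut}(C)$, which is exactly the claimed equality for this even integer $n \ge 2$.

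The only subtle point to verify carefully is that $G_n(C)$ is genuinely nontrivial, i.e., that the quasi-Galois point $P$ produced by Proposition \ref{involution} really contributes a nonidentity element to the generating set. This follows because $P$ being quasi-Galois means $|G[P]| \ge 2$ by the definition of quasi-Galois point, so $G[P]$ contains some $\tau \ne 1$, and this $\tau \in G_n(C)$. Hence $G_n(C) \supsetneq \{1\}$, and the simplicity argument applies. I do not expect a genuine obstacle here; the content has been front-loaded into Theorem \ref{generated} (normality) and Proposition \ref{involution} (existence of an even-order quasi-Galois point), and the present statement is essentially their formal combination via the group-theoretic fact that normal plus nontrivial plus simple forces equality.

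For the final sentence concerning the Klein quartic and the Wiman sextic, I would note that both have automorphism groups that are simple of even order in characteristic zero: ${\rm Aut}$ of the Klein quartic is ${\rm PSL}(2,7)$ (of order $168$) and ${\rm Aut}$ of the Wiman sextic is $A_6$ (of order $360$), as recorded in the cited references \cite{dik, wiman} and \cite[Remark 2.4]{harui}. Applying the theorem just proved, ${\rm Aut}(C) = G_n(C)$ for some even $n$; to pin down $n=2$ one checks, via Proposition \ref{involution} and the standard form of Theorem \ref{standard form}, that these curves carry quasi-Galois points whose associated groups $G[P]$ have order exactly $2$, so that $G_2(C)$ already captures an involution and hence equals the full (simple) automorphism group.
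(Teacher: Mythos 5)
Your proof of the main implication is correct and is exactly the paper's argument: Proposition \ref{involution} produces a quasi-Galois point $P$ with $n:=|G[P]|$ even, so $G_n(C)\ne\{1\}$; Theorem \ref{generated} makes $G_n(C)$ normal in ${\rm Aut}(C)$; and simplicity forces $G_n(C)={\rm Aut}(C)$. The gap is in the final sentence, where you must pin down $n=2$ for the two named curves. The relevant constraint comes from Lemma \ref{subgroup}: $|G[P]|$ divides $\deg\pi_P$, so an even $|G[P]|$ can only be $2$ or $4$ for the Klein quartic ($\deg\pi_P\in\{3,4\}$) and $2$ or $6$ for the Wiman sextic ($\deg\pi_P\in\{5,6\}$). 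Note also that $G_2(C)$ is generated only by groups $G[P]$ of order \emph{exactly} $2$; an involution sitting inside a cyclic $G[P]$ of order $4$ contributes to $G_4(C)$, not to $G_2(C)$, so you genuinely must exclude the larger values rather than merely exhibit an involution.

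For the Wiman sextic your sketch can be completed with the tools you name: by Theorem \ref{standard form} the group $G[P]$ is cyclic, so $|G[P]|=6$ would force an element of order six in ${\rm Aut}(C)\cong A_6$, which does not exist --- this is the paper's argument. But for the Klein quartic the claim that ``one checks, via Proposition \ref{involution} and the standard form of Theorem \ref{standard form}, that $|G[P]|$ has order exactly $2$'' cannot be carried out by those tools alone: ${\rm PSL}(2,7)$ \emph{does} contain elements of order $4$, so cyclicity of $G[P]$ does not rule out $|G[P]|=4$. A point with $|G[P]|=4$ would be an outer Galois point (since then $|G[P]|=\deg\pi_P$), and excluding such points for the Klein quartic is a nontrivial geometric fact that the paper imports from \cite[Example 4.8]{miura-yoshihara1}. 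Without this external input, your argument leaves open the possibility that every even-order $G[P]$ has order $4$, in which case you would only get ${\rm Aut}(C)=G_4(C)$, not the asserted ${\rm Aut}(C)=G_2(C)$.
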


\begin{proof}
By Proposition \ref{involution}, if $|{\rm Aut}(C)|$ is even, then there exists a quasi-Galois point $P$ with $|G[P]|=n$, where $n$ is an even integer. 
Since $G_n(C) \ne \{1\}$ is a normal subgroup by Theorem \ref{generated}, by the assumption that ${\rm Aut}(C)$ is simple, we have that $G_n(C)={\rm Aut}(C)$. 

For the Klein quartic and the Wiman sextic, ${\rm Aut}(C)$ is a simple group of even order (see \cite[Section 232]{burnside}, \cite[pp.348--349]{hartshorne}, \cite{klein} for the Klein quartic and see \cite{wiman} for the Wiman sextic). 
Therefore, the number $n$ is equal to two or four (resp. two or six) for the Klein quartic (resp. the Wiman sextic). 
It follows from \cite[Example 4.8]{miura-yoshihara1} that there does not exist a point $P$ with $|G[P]|=4$ for the Klein quartic. 
For the Wiman sextic, if there exists a point $P$ with $|G[P]|=6$, then there exists an element of order six, by Theorem \ref{standard form}. 
Since ${\rm Aut}(C)\cong A_6$ for the Wiman sextic, there does not exist an element of order six. 
Therefore, $n=2$.  
\end{proof}

In Sections 4, 5 and 6, we study the number of quasi-Galois points. 
To do this, we introduce some symbols here. 
The number of quasi-Galois points $P \in C$ with $|G[P]|=n$ (resp. $|G[P]| \ge n$) is denoted by $\delta[n]$ (resp. $\delta[\ge n]$). 
Similarly, we define $\delta'[n]$ and $\delta'[\ge n]$, when we consider the case $P \in \mathbb P^2 \setminus C$. 

Hereafter this article, we assume that $p=0$. 

\section{The number of quasi-Galois points for smooth curves} 
In this section, we assume that $C$ is smooth. 

Let $P \in \mathbb P^2$ be quasi-Galois for $C$ with $|G[P]|=n \ge 2$. 
Then, $G[P]=G_0[P]$. 
We consider ramification points for the projection $\pi_P$. 

\begin{proposition} \label{tangent2} 
There exist $d$ points $Q_1, \ldots, Q_d \in C \cap (F[P]\setminus \{P\})$ such that $P \in T_{Q_i}C$ and $I_{Q_i}(C, T_{Q_i}C)=l_i n$ for some integer $l_i \ge 1$. 
\end{proposition}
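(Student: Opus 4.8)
The plan is to reduce everything to the standard form provided by Theorem~\ref{standard form}. Since $C$ is smooth we have $G[P]=G_0[P]$, so after a linear transformation I may assume $P=(1:0:0)$, that $G_0[P]=\langle\sigma\rangle$ is cyclic with generator $\sigma=\mathrm{diag}(\zeta,1,1)$ of order $n$, and that $C$ is defined by $F=\sum_i G_{d-ni}(Y,Z)X^{ni}=0$. By Corollary~\ref{fixed locus} the fixed locus is $F[P]=\{P\}\cup\{X=0\}$, so $C\cap(F[P]\setminus\{P\})=C\cap\{X=0\}$, and this is where I would locate the required points. Restricting $F$ to $X=0$ leaves the binary form $G_d(Y,Z)$, which is not identically zero (otherwise the line $X=0$ would be a component of the irreducible curve $C$ of degree $d\ge 4$); hence $C\cap\{X=0\}$ is cut out by $G_d$ and accounts for $d$ intersection points counted with multiplicity.

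Next I would establish distinctness of these points and the tangency condition by a single gradient computation. At a point $Q=(0:\beta:\gamma)$ with $G_d(\beta,\gamma)=0$ one checks that $\partial_X F|_Q=0$ (for $i\ge 1$ the monomial derivative $ni\,G_{d-ni}X^{ni-1}$ still carries a positive power of $X$, and the $i=0$ term is free of $X$), while $\partial_Y F|_Q=\partial_Y G_d(\beta,\gamma)$ and $\partial_Z F|_Q=\partial_Z G_d(\beta,\gamma)$. Smoothness of $C$ forces this gradient to be nonzero, which by Euler's relation means $(\beta:\gamma)$ is a \emph{simple} root of $G_d$; therefore $G_d$ has $d$ distinct roots and we obtain $d$ distinct points $Q_1,\dots,Q_d$. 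Moreover $T_{Q_i}C$ has equation $\partial_Y G_d\cdot Y+\partial_Z G_d\cdot Z=0$ with vanishing $X$-coefficient, so $P=(1:0:0)$ lies on it; thus $\overline{PQ_i}=T_{Q_i}C$, and since $P\ne Q_i$ with $Q_i$ smooth, Fact~\ref{index}(2) gives $e_{Q_i}=I_{Q_i}(C,\overline{PQ_i})=I_{Q_i}(C,T_{Q_i}C)$.

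It remains to prove $n\mid e_{Q_i}$. Because $\pi_P\circ\sigma=\pi_P$, the projection is $\langle\sigma\rangle$-invariant and factors as $\pi_P=\bar\pi\circ q$, where $q:C\to C/\langle\sigma\rangle$ is the quotient, a Galois covering of degree $n$ with group $\langle\sigma\rangle$. Each $Q_i$ lies in $F[P]$, and by Corollary~\ref{fixed locus} this is the common fixed locus of every nontrivial element of $G_0[P]$, so the stabilizer of $Q_i$ equals all of $\langle\sigma\rangle$; by Fact~\ref{Galois covering}(1) the covering $q$ is totally ramified at $Q_i$, i.e. $e_{Q_i}(q)=n$. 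Multiplicativity of ramification indices in $\pi_P=\bar\pi\circ q$ then yields $e_{Q_i}(\pi_P)=n\cdot e_{q(Q_i)}(\bar\pi)$, so setting $l_i=e_{q(Q_i)}(\bar\pi)\ge 1$ gives $I_{Q_i}(C,T_{Q_i}C)=e_{Q_i}=l_i n$, as required. The crux of the argument is the gradient computation of the second paragraph: it simultaneously forces the tangent lines to pass through $P$ and, via simplicity of the roots of $G_d$, guarantees exactly $d$ distinct points; the divisibility by $n$ is then a routine consequence of factoring $\pi_P$ through the cyclic quotient.
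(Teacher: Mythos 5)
Your proof is correct, and its core step is the same as the paper's: both arguments factor $\pi_P$ through the quotient $C \to C/G[P]$, use Corollary \ref{fixed locus} to see that each point of $C \cap (F[P]\setminus\{P\})$ is fixed by all of $G[P]$, invoke Fact \ref{Galois covering}(1) to get ramification index $n$ for the quotient map, and conclude $e_{Q}=ln$ by multiplicativity, then translate via Fact \ref{index}(2). Where you diverge is in how tangency and the count of exactly $d$ distinct points are obtained. The paper gets tangency for free from the ramification bound: since $I_Q(C,\overline{PQ})=e_Q=ln\ge n\ge 2$, the line $\overline{PQ}$ must be the tangent $T_QC$; and the assertion that $F[P]\setminus\{P\}$ meets $C$ in exactly $d$ points is then immediate because every such tangent passes through $P\notin F[P]\setminus\{P\}$, so the line $X=0$ meets $C$ transversally everywhere. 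You instead work in the standard form of Theorem \ref{standard form} and run a gradient computation: $\partial_X F$ vanishes on $X=0$ (using $n\ge 2$), so smoothness forces the roots of $G_d(Y,Z)$ to be simple, yielding $d$ distinct points, and the tangent line equation has no $X$-term, putting $P$ on each $T_{Q_i}C$. Your route is more computational but has the virtue of making explicit the distinctness claim that the paper states in one sentence without detail; the paper's route is coordinate-free and shorter. Note a mild redundancy in your write-up: once you have established $n \mid e_{Q_i}$ via the quotient, the tangency $\overline{PQ_i}=T_{Q_i}C$ already follows from Fact \ref{index}(2) without any gradient computation — the computation's real payoff is only the simplicity of the roots of $G_d$, hence the count of $d$ points.
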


\begin{proof} 
Let $Q \in C \cap (F[P] \setminus \{P\})$. 
By Corollary \ref{fixed locus}, $\sigma(Q)=Q$ for each $\sigma \in G[P]$. 
By Fact \ref{Galois covering}(1), the ramification index at $Q$ for  the covering map $C \mapsto C/G[P]$ is equal to $n$. 
Since the projection $\pi_P$ is the composite map of $C \rightarrow C/G[P]$ and $C/G[P] \rightarrow \mathbb P^1$, the ramification index $e_Q$ at $Q$ for $\pi_P$ is equal to $l n$ for some $l \ge 1$. 
By Fact \ref{index}(2), $e_Q=I_Q(C, \overline{PQ})=l n$ and $\overline{PQ}=T_QC$. 
Furthermore, the line given by $F[P]\setminus \{P\}$ consists of exactly $d$ points. 
\end{proof}

If $P \in C$, we have the following. 

\begin{proposition} \label{tangent}
If $P \in C$, then we have $I_P(C, T_PC)=l n+1$ for some integer $l \ge 1$. 
\end{proposition}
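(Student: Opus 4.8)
The plan is to mirror the proof of Proposition \ref{tangent2}, replacing the point $Q$ on the fixed line by the projection center $P$ itself, and using part (1) of Fact \ref{index} in place of part (2). The whole argument runs through the factorization of $\pi_P$ across the quotient covering.

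First I would normalize via Theorem \ref{standard form} so that $P = (1:0:0)$ and there is a generator $\sigma \in G[P] = G_0[P]$ represented by $A_\sigma = \mathrm{diag}(\zeta, 1, 1)$ with $\zeta$ a primitive $n$-th root of unity. The crucial observation is that $P$ itself lies in the fixed locus: by Corollary \ref{fixed locus}, $F[P] = \{P\} \cup \{X = 0\}$, so every element of $G[P]$ fixes $P$. Consequently the stabilizer $G[P](P)$ is all of $G[P]$, and hence has order $n$.

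Next, I would consider the quotient map $C \to C/G[P]$, which is a Galois covering of degree $n$ between smooth curves with group $G[P]$. By Fact \ref{Galois covering}(1), the ramification index at $P$ for this covering equals $|G[P](P)| = n$. Since $\pi_P$ factors as the composite $C \to C/G[P] \to \mathbb{P}^1$, the ramification index $e_P$ of $\pi_P$ at $P$ is a multiple of the ramification index of the first map, so $e_P = l n$ for some integer $l \ge 1$.

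Finally, because we are now projecting from the point $P \in C$ itself (the case $P = Q$ of Fact \ref{index}), I would invoke Fact \ref{index}(1), which gives $e_P = I_P(C, T_PC) - 1$. Combined with $e_P = l n$, this yields $I_P(C, T_PC) = l n + 1$, as claimed. The argument is essentially routine given the machinery already assembled; the only step that genuinely needs care is confirming that $P$ is fixed by the \emph{entire} group $G[P]$, so that the local index of the quotient map at $P$ is exactly $n$ and not a proper divisor of $n$ — and this is precisely what Corollary \ref{fixed locus} guarantees.
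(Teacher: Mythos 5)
Your proposal is correct and follows essentially the same route as the paper's own proof: Corollary \ref{fixed locus} gives $\sigma(P)=P$ for all $\sigma \in G[P]$, Fact \ref{Galois covering}(1) then yields ramification index $n$ at $P$ for $C \rightarrow C/G[P]$, the factorization of $\pi_P$ through the quotient gives $e_P = ln$ with $l \ge 1$, and Fact \ref{index}(1) converts this to $I_P(C, T_PC) = ln+1$. The opening normalization via Theorem \ref{standard form} is harmless extra scaffolding that the paper omits, since Corollary \ref{fixed locus} already supplies the needed fixed-point statement.
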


\begin{proof}
By Corollary \ref{fixed locus}, for any $\sigma \in G[P]$, $\sigma(P)=P$. 
Then the covering map $C \rightarrow C/G[P]$ is ramified at $P$ with index $n$, by Fact \ref{Galois covering}(1). 
Since the projection $\pi_P$ is the composite map of $C \rightarrow C/G[P]$ and $C/G[P] \rightarrow \mathbb P^1$, the ramification index $e_P$ at $P$ is equal to $l n$ for some $l \ge 1$. 
Note that $e_P=I_P(C, T_PC)-1$, by Fact \ref{index}(1). 
We have $I_P(C, T_PC)=l n+1$. 
\end{proof}

The following fact is well-known. 

\begin{lemma} \label{total-cyclic}
Let $G \subset {\rm Aut}(C)$ be a subgroup, and let $Q \in C$ be a point.  
If $\sigma(Q)=Q$ for any $\sigma \in G$, then $G$ is a cyclic group. 
\end{lemma}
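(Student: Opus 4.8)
The plan is to show that the stabilizer of a point on a smooth curve of genus at least two is cyclic by exhibiting a faithful one-dimensional representation of $G$ and invoking the classical fact that a finite subgroup of the multiplicative group $K^{*}$ is cyclic. First I would observe that, since $C$ is a smooth plane curve of degree $d \ge 4$, its genus is at least three, so ${\rm Aut}(C)$ is finite by Hurwitz's bound; hence $G$ is a finite group.

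Next I would let $t$ be a local uniformizer of $C$ at $Q$ and consider the action of $G$ on the one-dimensional cotangent space $\mathfrak{m}_Q/\mathfrak{m}_Q^2$. Since every $\sigma \in G$ fixes $Q$, it induces an automorphism of the local ring $\mathcal{O}_{C,Q}$ preserving $\mathfrak{m}_Q$, and therefore a scalar $\rho(\sigma) \in K^{*}$ describing its action on $\mathfrak{m}_Q/\mathfrak{m}_Q^2$. This assignment $\rho\colon G \to K^{*}$ is a group homomorphism. Once $\rho$ is shown to be injective, the image is a finite subgroup of $K^{*}$, hence cyclic, and so is $G$.

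The crux is the injectivity of $\rho$, which is where the hypothesis $p=0$ enters. Working in the completed local ring $\widehat{\mathcal{O}}_{C,Q} \cong K[[t]]$, an element $\sigma$ with $\rho(\sigma)=1$ satisfies $\sigma^{*}(t) = t + c_k t^{k} + (\text{higher order})$ for some minimal $k \ge 2$ with $c_k \ne 0$, unless $\sigma^{*}(t)=t$. I would then compute that $(\sigma^{m})^{*}(t) = t + m\,c_k t^{k} + (\text{higher order})$ by induction on $m$; in characteristic zero $m\,c_k \ne 0$ for every $m \ge 1$, so $\sigma$ would have infinite order, contradicting the finiteness of $G$. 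Hence $\sigma^{*}(t)=t$, so $\sigma$ acts as the identity on the formal neighborhood of $Q$ and, $C$ being irreducible, on all of $C$. This gives $\ker \rho = \{1\}$ and completes the argument.

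I expect the main obstacle to be precisely this injectivity step: one must rule out a nontrivial automorphism acting trivially to first order, and this genuinely requires characteristic zero (equivalently, tameness of the stabilizer), since in positive characteristic wild inertia can produce noncyclic point stabilizers. The inductive higher-order computation is the decisive point, after which the passage through $K^{*}$ is routine.
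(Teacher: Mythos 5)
Your proof is correct. Note that the paper itself offers no argument here: Lemma \ref{total-cyclic} is introduced with the words ``the following fact is well-known'' and is never proved, so there is no internal proof to compare against. What you have written is precisely the standard proof of this well-known fact: embed the stabilizer $G$ into $K^*$ via its action on the cotangent line $\mathfrak{m}_Q/\mathfrak{m}_Q^2$, and use that a finite subgroup of $K^*$ is cyclic. Your treatment of the one delicate step is sound: if $\rho(\sigma)=1$ and $\sigma^*(t)=t+c_kt^k+\cdots$ in $\widehat{\mathcal{O}}_{C,Q}\cong K[[t]]$, then indeed $(\sigma^m)^*(t)=t+mc_kt^k+\cdots$, which forces infinite order in characteristic zero; and an automorphism acting trivially on $\widehat{\mathcal{O}}_{C,Q}$ acts trivially on $\mathcal{O}_{C,Q}$, hence on its fraction field $K(C)$ (this is where irreducibility is used), hence is the identity. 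You also correctly locate where the hypothesis $p=0$ (in force from the end of Section 3 of the paper) is needed, since in positive characteristic wild inertia makes point stabilizers only cyclic-by-$p$-group rather than cyclic, and finiteness of $G$ via the genus bound $g=(d-1)(d-2)/2\ge 3$ for smooth $C$ of degree $d\ge 4$ is legitimate.
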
 

Using this fact, we have the following. 

\begin{proposition}\label{two quasi-Galois}
Let $P_1, P_2\in \mathbb P^2$ be points with $|G[P_1]|=n_1 \ge 2$, $|G[P_2]|=n_2 \ge 2$. 
\begin{itemize}
\item[(1)] If $P_1, P_2 \in C$, then $C \cap F[P_1] \cap F[P_2] \subset \{P_1, P_2\}$. 
Furthermore, if $n_1$ and $n_2$ are not coprime, then we have $C \cap F[P_1] \cap F[P_2]=\emptyset$. 
\item[(2)] If $P_1, P_2 \in \mathbb{P}^2 \setminus C$, then $C \cap F[P_1] \cap F[P_2] = \emptyset$. 
\end{itemize} 
\end{proposition}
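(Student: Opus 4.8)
The plan is to reduce both parts to a single situation: a point $Q$ lying in $C \cap F[P_1] \cap F[P_2]$. By Corollary~\ref{fixed locus} the set $F[P_i]$ is the fixed locus of \emph{every} non-trivial element of $G[P_i]$, so if $Q \in F[P_1] \cap F[P_2]$ then every element of $G[P_1]$ and of $G[P_2]$ fixes $Q$. Consequently the subgroup $G := \langle G[P_1], G[P_2] \rangle \subset \mathrm{Aut}(C)$ fixes $Q$, and Lemma~\ref{total-cyclic} shows that $G$ is cyclic; in particular $G$ is abelian and $G[P_1], G[P_2]$ commute. Throughout I assume $P_1 \ne P_2$, which is the only case of interest, since for $P_1 = P_2$ the set $C \cap F[P_1]$ already contains the $d$ points produced by Proposition~\ref{tangent2}.

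The heart of the argument, and the step I expect to be the main obstacle, is to show that $Q \notin \{P_1, P_2\}$ is impossible. Assume $Q \ne P_1, P_2$. Then $Q \in F[P_i] \setminus \{P_i\}$, so Proposition~\ref{tangent2} gives $\overline{P_i Q} = T_Q C$ for $i = 1, 2$; hence $P_1$, $P_2$ and $Q$ all lie on the single line $M := T_Q C$. I would then examine the induced action on $M$. In the standard coordinates of Theorem~\ref{standard form} for $P_1$, a generator $\sigma_1$ of $G[P_1]$ is $\mathrm{diag}(\zeta,1,1)$ with fixed locus $\{P_1\} \cup \{X=0\}$; since $P_1 \in M$ but $P_1 \notin \{X=0\}$, the line $M$ meets $\{X=0\}$ only at $Q$, so $\sigma_1$ restricts to a non-trivial automorphism of $M \cong \mathbb{P}^1$ whose fixed-point set is exactly $\{P_1, Q\}$. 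Symmetrically a generator $\sigma_2$ of $G[P_2]$ restricts to a non-trivial automorphism of $M$ with fixed-point set $\{P_2, Q\}$. Because $G$ is abelian these restrictions commute; but two non-trivial finite-order (hence semisimple, as $p = 0$) elements of $\mathrm{PGL}_2$ commute only when they share the same unordered pair of fixed points. Thus $\{P_1, Q\} = \{P_2, Q\}$, forcing $P_1 = P_2$, a contradiction. Equivalently, in an affine coordinate $u$ on $M$ with $Q = \infty$ one writes $\sigma_1 : u \mapsto \lambda u$ and $\sigma_2 : u \mapsto \beta u + \gamma$, and checks directly that $\sigma_1 \sigma_2 = \sigma_2 \sigma_1$ forces $\gamma = 0$, i.e. $P_1 = P_2$.

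With this in hand the proposition assembles quickly. For part~(2), since $P_1, P_2 \notin C$ any $Q \in C \cap F[P_1] \cap F[P_2]$ automatically satisfies $Q \ne P_1, P_2$, so the previous step yields a contradiction and the intersection is empty. For part~(1) the same step shows that any $Q$ in the intersection must lie in $\{P_1, P_2\}$, giving $C \cap F[P_1] \cap F[P_2] \subset \{P_1, P_2\}$. Finally, for the coprimality statement I would return to the cyclic group $G$: by Corollary~\ref{two groups} we have $G[P_1] \cap G[P_2] = \{1\}$, and two subgroups of a cyclic group meet trivially if and only if their orders are coprime; hence the mere existence of a point $Q \in C \cap F[P_1] \cap F[P_2]$ forces $\gcd(n_1, n_2) = 1$. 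Taking the contrapositive, if $n_1$ and $n_2$ are not coprime then $C \cap F[P_1] \cap F[P_2] = \emptyset$, completing~(1).
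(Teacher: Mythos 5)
Your proposal is correct, and its group-theoretic half coincides with the paper's: both arguments feed the common fixed point $Q$ into Lemma~\ref{total-cyclic} to make $G=\langle G[P_1],G[P_2]\rangle$ cyclic and then play this off against Corollary~\ref{two groups}. (The paper picks subgroups of a common order $n\ge 2$ and invokes uniqueness of the order-$n$ subgroup of a cyclic group; your version via the subgroup of order $\gcd(n_1,n_2)$ is the same computation.) Where you genuinely diverge is the case $Q\ne P_1,P_2$. The paper never uses commutativity: it takes $\sigma\in G[P_1]\setminus\{1\}$, observes that $\sigma$ fixes exactly $P_1$ and $Q$ on the common line, so $P_3:=\sigma(P_2)\ne P_2$ is a \emph{third} quasi-Galois point with $G[P_3]=\sigma G[P_2]\sigma^{-1}$ of the same order $n_2$ and with $Q\in C\cap F[P_2]\cap F[P_3]$, and then feeds the pair $(P_2,P_3)$, whose orders are trivially not coprime, back into the first case. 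You instead extract commutativity from the cyclicity of $G$ and kill the configuration by a direct $\mathrm{PGL}_2$ computation on $M=T_QC$. Your route is more self-contained (one computation instead of a reduction to the non-coprime case); the paper's is shorter and reuses what it has already proved. Both rest on the same inputs, namely Corollary~\ref{fixed locus} and the collinearity of $P_1,P_2,Q$ via Proposition~\ref{tangent2} --- a step you spell out and the paper dispatches with only the words ``by definition.''

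One caution: the auxiliary claim you state in passing --- that two non-trivial commuting finite-order elements of $\mathrm{PGL}_2$ must share the same unordered pair of fixed points --- is false in general: $u\mapsto -u$ and $u\mapsto 1/u$ commute, with fixed pairs $\{0,\infty\}$ and $\{1,-1\}$ (the Klein four-group in $\mathrm{PGL}_2$). The conclusion holds only when neither element swaps the fixed points of the other, and that is exactly what your common fixed point supplies: $\sigma_2|_M$ fixes $Q$, hence cannot interchange $\{P_1,Q\}$, hence fixes $P_1$, forcing $P_1\in\{P_2,Q\}$ and so $P_1=P_2$. Your explicit affine verification ($\sigma_1\colon u\mapsto \lambda u$, $\sigma_2\colon u\mapsto \beta u+\gamma$, with commutation forcing $\gamma(\lambda-1)=0$ and hence $\gamma=0$) is precisely this corrected argument, so the proof as a whole stands; simply delete or weaken the overstated general claim.
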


\begin{proof}
Assume that there exists a point $Q \in C \cap F[P_1] \cap F[P_2]$. 
Note that, by definition, points $P_1, P_2$ and $Q$ are collinear. 

First, we assume that $n_1$ and $n_2$ are divisible by some integer $n \ge 2$. 
Since $G[P_1]$ and $G[P_2]$ are cyclic by Theorem \ref{standard form}, there exist subgroups of $G[P_1]$ and $G[P_2]$ of order $n$ respectively. 
Let $G$ be the group generated by such subgroups.  
Then, $G$ fixes the point $Q$. 
By Lemma \ref{total-cyclic}, $G$ is a cyclic group. 
Therefore, by Corollary \ref{two groups}, $G$ is a cyclic group of order $n^2$. 
However, the cyclic group of order $n^2$ has a unique subgroup of order $n$. 
This is a contradiction. 
In particular, we have the latter assertion of (1). 

Next, we consider the case where $Q \ne P_1, P_2$. 
Let $\sigma \in G[P_1] \setminus \{1\}$. 
Since $\sigma$ fixes $P_1$ and $Q$ on the line $\overline{P_1Q}=\overline{P_2Q}$, we have $P_3:=\sigma(P_2) \ne P_2$. 
Then, $G[P_3]=\sigma G[P_2] \sigma^{-1}$ and $Q \in C \cap F[P_2] \cap F[P_3]$. 
By the above discussion, we have a contradiction. 
We have assertions (1) and (2). 
\end{proof} 

\begin{remark}
When $P_1, P_2 \in C$ and $|G[P_1]|$, $|G[P_2]|$ are coprime, the assertion $C \cap F[P_1] \cap F[P_2]=\emptyset$ does not hold in general. 
Let $P_1=(1:0:0)$, $P_2=(0:1:0)$, and let $C$ be the smooth plane curve defined by 
$$ X^6Z+X^3Y^4+Y^6Z+Z^7=0. $$
By Theorem \ref{standard form}, $|G[P_1]|=3$, $|G[P_2]|=2$, and $C\cap F[P_1] \cap F[P_2]=\{P_1, P_2\}$. 
\end{remark}

For the number of quasi-Galois points on $C$, we have the following.

\begin{theorem} \label{inner} Let $n:=n_1 < n_2 <\dots < n_r$ be positive integers which are divisible by some integer $m \ge 2$. Then, we have the following. 
\begin{itemize}
\item[(1)]  $\sum_{i=1}^{r} \delta[n_i](n_i-1+d(n_i-2)) \le 3d(d-2)$. 
\item[(2)] If $(n+1)(n-1+d(n-2)) > 3d(d-2)$, then $\sum_i \delta[n_i] \le 1$.  
\item[(3)] If $(n^2+n+1)(n-1+d(n-2))> 3d(d-2)$, then $\sum_i\delta[n_i] \le d$.
\end{itemize} 
\end{theorem}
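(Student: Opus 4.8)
The plan is to reduce part (1) to a count of inflection points via the Hessian, and parts (2) and (3) to an orbit argument for the linear actions of the groups $G[P]$.

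First I would fix notation: let $\mathcal{S}$ be the set of quasi-Galois points $P\in C$ with $|G[P]|\in\{n_1,\dots,n_r\}$, so that $N:=|\mathcal{S}|=\sum_i\delta[n_i]$, and recall the standard fact that, since $p=0$ and $C$ is smooth of degree $d$, the total weight of the inflection points equals $\deg C\cdot\deg(\mathrm{Hess})=3d(d-2)$, where the weight of a smooth point $Q$ is $I_Q(C,T_QC)-2$. For $P\in\mathcal{S}$ with $|G[P]|=n_i$, Proposition \ref{tangent} gives $I_P(C,T_PC)=l\,n_i+1$ with $l\ge 1$, so the weight at $P$ is $l\,n_i-1\ge n_i-1$; Proposition \ref{tangent2} produces the $d$ distinct further points $Q_1,\dots,Q_d$ forming $C\cap(F[P]\setminus\{P\})$, each with $I_{Q_j}(C,T_{Q_j}C)=l_j n_i\ge n_i$, hence weight $l_j n_i-2\ge n_i-2$. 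Thus the $d+1$ distinct points of $C\cap F[P]$ carry total inflection weight at least $(n_i-1)+d(n_i-2)$.

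The point that makes these local counts additive is disjointness: for distinct $P,P'\in\mathcal{S}$ the sets $C\cap F[P]$ and $C\cap F[P']$ are disjoint, since a common point would lie in $C\cap F[P]\cap F[P']$, which is empty by Proposition \ref{two quasi-Galois}(1) (both $|G[P]|$ and $|G[P']|$ are divisible by $m\ge 2$, hence not coprime). Summing the weight estimate over the pairwise disjoint sets $C\cap F[P]$ and bounding by the total weight $3d(d-2)$ gives $\sum_i\delta[n_i]\bigl(n_i-1+d(n_i-2)\bigr)\le 3d(d-2)$, which is (1). Writing $g(t)=t-1+d(t-2)$, monotonicity of $g$ together with $n_i\ge n$ yields the coarser bound $N\cdot g(n)\le 3d(d-2)$, which drives the remaining two parts.

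For (2) and (3) I would exploit that each $G[P_1]$ acts on $\mathcal{S}$ preserving the pencil of lines through $P_1$ (from $\pi_{P_1}\circ\sigma=\pi_{P_1}$ one gets $\sigma(Q)\in\overline{P_1Q}$) and acts \emph{freely} on $\mathcal{S}\setminus\{P_1\}$: a point $P'\in\mathcal{S}\setminus\{P_1\}$ fixed by some $\sigma\neq 1$ would lie in $F[P_1]$ by Corollary \ref{fixed locus}, giving $P'\in C\cap F[P_1]\cap F[P']=\emptyset$, absurd. Hence every $G[P_1]$-orbit in $\mathcal{S}\setminus\{P_1\}$ has size $|G[P_1]|\ge n$ and lies on a single line through $P_1$. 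For (2): if $N\ge 2$ then $n\le|G[P_1]|$ divides $N-1$, so $N\ge n+1$, and then $(n+1)g(n)\le N g(n)\le 3d(d-2)$ contradicts the hypothesis; therefore $N\le 1$.

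For (3) the hypothesis gives $N<n^2+n+1$, so it suffices to prove $N\ge d+1\Rightarrow N\ge n^2+n+1$. If $N\ge d+1$ the points of $\mathcal{S}$ cannot be collinear (a line meets $C$ in at most $d$ points), so I choose non-collinear $P_1,P_2,P_3\in\mathcal{S}$. The $G[P_1]$-orbit of $P_3$ gives $n_1$ points $C_1,\dots,C_{n_1}$ on $\overline{P_1P_3}$; applying $G[P_2]$ to each $C_i$ gives $n_2$ points on the distinct lines $\overline{P_2C_i}$, and these $n_1$ orbits are pairwise disjoint and avoid $P_2$, producing $n_1 n_2$ points; finally the $G[P_1]$-orbit of $P_2$ contributes $n_1$ further points on $\overline{P_1P_2}$, disjoint from the grid, while $P_1$ lies in none of these, so $N\ge n_1 n_2+n_1+1\ge n^2+n+1$. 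I expect the main obstacle to be precisely this last bookkeeping—checking all the asserted disjointness and non-incidence relations (no two constructed orbits meet, and $P_1,P_2$ avoid the grid)—which rests only on the elementary fact that two distinct lines through a common point meet solely at that point, combined with the freeness established above.
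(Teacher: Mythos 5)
Your proposal is correct and follows essentially the same route as the paper: part (1) via the Hessian/flex formula applied to the points of $C\cap F[P]$ using Propositions \ref{tangent} and \ref{tangent2}, with disjointness from Proposition \ref{two quasi-Galois}(1), and parts (2) and (3) via the free action of $G[P_1]$ on the other quasi-Galois points yielding $n+1$ collinear points and then $n^2+n+1$ points in the non-collinear case. Your version merely makes explicit some bookkeeping the paper leaves implicit (the freeness/divisibility argument in (2) and the disjointness of the orbit grid in (3), which you pivot at $P_2$ where the paper pivots at $P'$), but the decomposition and the counts are identical.
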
 

\begin{proof}
Let $P \in C$ be a point with $|G[P]|=n \ge 2$. 
By Proposition \ref{tangent}, $I_P(C, T_PC) \ge n+1$. 
By Proposition \ref{tangent2}, there exist $d$ points $Q \in C \cap (F[P] \setminus \{P\})$ such that $P \in T_QC$ and $I_Q(C, T_QC) \ge n$. 
Therefore, for each quasi-Galois point $P \in C$ with $G[P]=n$, we need at least $(d+1)$  flexes (one flex if $n=2$), and the sum of their orders is at least
$$ n+1-2+d(n-2). $$
It follows from Proposition \ref{two quasi-Galois} that there exists no point $Q \in C$ such that $Q \in F[P_1] \cap F[P_2]$ for different quasi-Galois points $P_1$ and $P_2$ if $|G[P_1]|$ and $|G[P_2]|$ are not coprime. 
By the flex formula \cite[Theorem 1.5.10]{namba}, we have assertion (1). 

We consider assertion (2). 
Assume by contradiction that $\sum_i \delta[n_i] \ge 2$. 
Let $P_1, P_2 \in C$ be points such that $|G[P_1]|$, $|G[P_2]|$ are at least $n$ and divisible by $m$. 
By Proposition \ref{two quasi-Galois}, $\sigma(P_2) \ne P_2$ for any $\sigma \in G[P_1] \setminus \{1\}$.  
Therefore, we have $\sum_i \delta[n_i] \ge n+1$. 
By assertion (1), we have the inequality 
$$ (n+1)(n-1+d(n-2)) \le 3d(d-2).  $$ 
This is a contradiction. 

We consider assertion (3). 
Assume by contradiction that $\sum_i \delta[n_i] >d$.  
Then, we have three points $P_1, P_2, P' \in C$ such that $|G[P_1]|$, $|G[P_2]|$ and $|G[P']|$ are at least $n$, are divisible by $m$, and $P' \not\in \overline{P_1P_2}$. 
Similar to the proof of assertion (2), we have at least $n+1$ quasi-Galois points $P_1, \ldots, P_{n+1}$ on the line $\overline{P_1P_2}$. 
Further, for each $i \in \{1, \ldots, n+1\}$, there exist $n+1$ quasi-Galois points on the line $\overline{P_iP'}$. 
Therefore, we have at least $(n+1)\times n+1$ quasi-Galois points. 
By assertion (1), we have the inequality 
$$ (n^2+n+1)(n-1+d(n-2)) \le 3d(d-2). $$
This is a contradiction. 
\end{proof}

\begin{corollary} 
Assume that $d \ge 7$, $d$ is odd, and $n=(d-1)/2$.
Then, we have $\delta[\ge n] \le d$. 
Furthermore, $\delta[\ge n] \le 1$ if $d \ge 15$. 
\end{corollary}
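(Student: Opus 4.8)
The plan is to reduce everything to Theorem \ref{inner}, and the first move is to pin down which group orders can actually occur. Since $C$ is smooth and $P \in C$, the projection has degree $\deg \pi_P = d-1$, so by Lemma \ref{subgroup} the order $|G[P]|$ of any quasi-Galois point divides $d-1 = 2n$. The key elementary observation is that the only divisors of $2n$ lying in the interval $[n, 2n]$ are $n$ and $2n$ themselves: a divisor $k$ with $n < k < 2n$ would force $2n/k$ to be an integer strictly between $1$ and $2$, which is impossible. Hence every quasi-Galois point counted by $\delta[\ge n]$ has $|G[P]| \in \{n, 2n\}$, and therefore $\delta[\ge n] = \delta[n] + \delta[2n]$. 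This collapse is what lets the bound on a sum of $\delta$'s translate into a bound on $\delta[\ge n]$.

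Next I would set up the hypotheses of Theorem \ref{inner}. Taking the index set $\{n_1, n_2\} = \{n, 2n\}$ (discarding either value if its $\delta$ vanishes), both entries are divisible by $m = n$; and since $d \ge 7$ gives $n = (d-1)/2 \ge 3 \ge 2$, the common-divisor requirement $m \ge 2$ is met. With $n_1 = n$, the quantities appearing in Theorem \ref{inner}(2) and (3) are exactly the ones governing our sum $\delta[\ge n] = \delta[n] + \delta[2n]$.

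It then remains to verify the two numerical inequalities after substituting $n = (d-1)/2$. For the first assertion I would apply Theorem \ref{inner}(3): writing $n - 1 + d(n-2) = (d^2 - 4d - 3)/2$ and $n^2 + n + 1 = (d^2 + 3)/4$, the hypothesis of (3) becomes $d^4 - 4d^3 - 24d^2 + 36d - 9 > 0$, which I would confirm holds at $d = 7$ (value $96$) and, by noting the derivative is positive there and increasing, for all $d \ge 7$; this yields $\delta[\ge n] \le d$. For the second assertion I would apply Theorem \ref{inner}(2): with $n + 1 = (d+1)/2$ the hypothesis reduces to $d^3 - 15d^2 + 17d - 3 > 0$, which is negative at $d = 13$ but positive at $d = 15$ (value $252$) and increasing thereafter, giving $\delta[\ge n] \le 1$ for $d \ge 15$.

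Essentially all the conceptual content sits in the divisor observation that forces $|G[P]| \in \{n, 2n\}$; once that is in place, the rest is a mechanical invocation of Theorem \ref{inner}. The only point demanding genuine care is the boundary behavior of the two cubic/quartic polynomials, since the thresholds $d = 7$ and $d = 15$ are sharp among odd $d$ (the case $d = 13$ shows the second inequality genuinely fails just below). I would therefore record both the value at the threshold and the sign of the derivative for $d$ beyond it, rather than appealing loosely to dominance of the leading term.
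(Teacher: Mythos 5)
Your proposal is correct and follows exactly the route the paper intends: the corollary is stated without proof as an immediate consequence of Theorem \ref{inner}, and your argument supplies precisely that derivation --- the divisor observation $|G[P]| \mid \deg\pi_P = d-1 = 2n$ forcing $|G[P]| \in \{n, 2n\}$, followed by the numerical verification of the hypotheses of Theorem \ref{inner}(3) for $d \ge 7$ and of Theorem \ref{inner}(2) for $d \ge 15$ (your polynomial reductions and threshold values, including the failure at $d=13$, all check out).
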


For the number of quasi-Galois points in $\mathbb P^2 \setminus C$, we have the following. 

\begin{theorem} \label{outer}
\begin{itemize}
\item[(1)] $\sum_{n \ge 3} \delta'[n](n-2) \le 3(d-2)$. 
\item[(2)] If $(n+1)(n-2) > 3(d-2)$, then $\delta'[\ge n] \le 3$.  
\end{itemize} 
\end{theorem}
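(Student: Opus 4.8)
The plan is to follow the scheme of Theorem~\ref{inner}, but using that an outer point contributes no inflection at itself and that the inflections attached to distinct outer points never collide.

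For (1), fix an outer quasi-Galois point $P$ with $|G[P]|=n\ge 3$. By Proposition~\ref{tangent2} the line $F[P]\setminus\{P\}$ meets $C$ in exactly $d$ points $Q_1,\dots,Q_d$, and $I_{Q_i}(C,T_{Q_i}C)=l_in\ge n\ge 3$, so each $Q_i$ is a flex of order $\ge n-2\ge 1$. Since $P\notin C$ we have $C\cap F[P]=\{Q_1,\dots,Q_d\}$, so Proposition~\ref{two quasi-Galois}(2) shows that the flex sets attached to distinct outer quasi-Galois points are pairwise disjoint. Thus a point counted by $\delta'[n]$ contributes inflectionary order at least $d(n-2)$, on a set of points of $C$ disjoint from the others. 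Summing over all outer quasi-Galois points and invoking the flex formula \cite[Theorem~1.5.10]{namba}, which bounds $\sum_Q\bigl(I_Q(C,T_QC)-2\bigr)$ by $3d(d-2)$, gives $\sum_{n\ge 3}\delta'[n]\,d(n-2)\le 3d(d-2)$; dividing by $d$ yields (1).

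For (2), I would argue by contradiction, the key step being an orbit construction as in Theorem~\ref{inner}. Note first that the hypothesis forces $n\ge 3$. I claim that, under the hypothesis, any two outer quasi-Galois points $P_1,P_2$ with group orders $\ge n$ satisfy $P_2\in F[P_1]$. Indeed, if $P_2\notin F[P_1]$, then by Corollary~\ref{fixed locus} no nontrivial element of the cyclic group $G[P_1]$ fixes $P_2$, so the $G[P_1]$-orbit of $P_2$ has exactly $|G[P_1]|\ge n$ points. As each element of $G[P_1]$ is a linear automorphism of $C$ fixing every line through $P_1$, this orbit consists of outer quasi-Galois points lying on $\overline{P_1P_2}$ whose groups are conjugate to $G[P_2]$, hence of order $\ge n$; adjoining $P_1$ itself produces at least $n+1$ outer quasi-Galois points of order $\ge n$. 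Applying (1) to these forces $(n+1)(n-2)\le 3(d-2)$, contradicting the hypothesis, and the claim follows.

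Finally I would convert the claim into the bound. Write $\ell_P:=F[P]\setminus\{P\}$, a line not containing $P$; the claim says $P_j\in\ell_{P_i}$ for every $i\ne j$. If there were four such points $P_1,P_2,P_3,P_4$, then $P_3,P_4\in\ell_{P_1}\cap\ell_{P_2}$. If $\ell_{P_1}\ne\ell_{P_2}$ these meet in one point, forcing $P_3=P_4$; if $\ell_{P_1}=\ell_{P_2}$ then $P_1\in\ell_{P_2}=\ell_{P_1}$, contradicting $P_1\notin\ell_{P_1}$. Hence $\delta'[\ge n]\le 3$. The main obstacle is the claim in (2): verifying that the $G[P_1]$-orbit of $P_2$ genuinely yields $n+1$ collinear outer quasi-Galois points of order $\ge n$, so that part (1) becomes applicable; once this is secured, the averaging in (1) and the line-configuration count are routine.
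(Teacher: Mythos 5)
Your proof is correct and follows essentially the same route as the paper: part (1) via Proposition~\ref{tangent2}, the disjointness from Proposition~\ref{two quasi-Galois}(2) and the flex formula, and part (2) via the dichotomy $P_2\in F[P_1]$ versus a $G[P_1]$-orbit of size $\ge n$ yielding $n+1$ quasi-Galois points that contradict (1). Your explicit orbit construction and the final two-line intersection count merely flesh out steps the paper states tersely (including correcting its typo $\delta[\ge n]$ for $\delta'[\ge n]$).
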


\begin{proof}
We consider assertion (1). 
Similar to the proof of Theorem \ref{inner}, for each quasi-Galois point $P \in \mathbb{P}^2 \setminus C$, we need at least $d$ flexes, and the sum of their orders is at least
$$ d \times (n-2). $$
Using Proposition \ref{two quasi-Galois} and the flex formula, we have 
$$ \sum_{n \ge 3} \delta'[n] d(n-2) \le 3d(d-2). $$

We consider assertion (2). 
Assume that $\delta'[\ge n] \ge 2$. 
Let $P_1, P_2 \in \mathbb{P}^2 \setminus C$ be points with $|G[P_1]| \ge n$, $|G[P_2]| \ge n$. 
By Corollary \ref{fixed locus}, if $P_2 \not \in F[P_1]$, then $\delta[\ge n] \ge n+1$. 
By assertion (1), we have the inequality 
$$ (n+1)d(n-2) \le 3d(d-2).  $$ 
This is a contradiction. 
Therefore, $P_2 \in F[P_1]$. 
If $P_3$ is another quasi-Galois point, then $P_3 \in F[P_1] \cap F[P_2]$. 
We have $\delta'[\ge n] \le 3$. 
\end{proof}

We consider the case where $d \ge 6$, $d$ is even, and $n=d/2$. 
By Theorem \ref{outer}(1), 
$$ \delta'[\ge n] \le 3 \times \frac{d-2}{n-2} \le 3 \times 4=12. $$
Note that $(n+1)(n-2)>3(d-2)$ if and only if $d \ge 14$. 
Using Theorem \ref{outer}(2), we have the following. 

\begin{theorem} \label{d/2}
Assume that $d \ge 14$, $d$ is even, and $n=d/2$. 
Then, $\delta'[\ge n]\ge 2$ if and only if $C$ is projectively equivalent to the curve defined by 
$$ X^{2n}+Y^{2n}+Z^{2n}+aX^nY^n+bY^nZ^n+cZ^nX^n=0, $$
where $a, b, c \in K$. 
In this case, $\delta'[\ge n]=3$. 
\end{theorem}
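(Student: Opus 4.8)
The plan is to prove the two implications separately and then read off the exact count from Theorem \ref{outer}. The ``if'' direction is quick: for $C\colon X^{2n}+Y^{2n}+Z^{2n}+aX^nY^n+bY^nZ^n+cZ^nX^n=0$ I would exhibit the three coordinate vertices as quasi-Galois points. Each vertex, say $(0:0:1)$, lies off $C$, and the diagonal map $(X:Y:Z)\mapsto(X:Y:\zeta Z)$ with $\zeta$ a primitive $n$-th root of unity preserves $C$ (every monomial meets $Z$ only in degree $0$, $n$, or $2n$) while fixing the fibers $(X:Y)$ of $\pi_{(0:0:1)}$; hence it lies in $G[(0:0:1)]$ and has order $n$. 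By symmetry $|G[P]|\ge n$ at all three vertices, so $\delta'[\ge n]\ge 3$.

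For the ``only if'' direction I would begin with two points $P_1,P_2\in\mathbb{P}^2\setminus C$ having $|G[P_i]|\ge n$. First I would show $P_2\in F[P_1]$: otherwise, applying a generator of the order-$n$ subgroup of $G[P_1]$ to $P_2$ produces $n$ further such points (as in the proof of Theorem \ref{outer}(2)), so $\delta'[\ge n]\ge n+1\ge 8$, contradicting the bound $\delta'[\ge n]\le 3$ furnished by Theorem \ref{outer}(2) (whose hypothesis $(n+1)(n-2)>3(d-2)$ holds for $d\ge 14$). Next, by Theorem \ref{standard form} I would normalize so that $P_1=(1:0:0)$, a generator $\sigma_1\in G[P_1]$ of order $n$ equals $\mathrm{diag}(\zeta,1,1)$, and, since $P_1\notin C$, the equation of $C$ reads $c_0X^{2n}+G_n(Y,Z)X^n+G_{2n}(Y,Z)=0$ with $c_0=F(1:0:0)\ne 0$. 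By Corollary \ref{fixed locus}, $F[P_1]=\{P_1\}\cup\{X=0\}$, so $P_2\in\{X=0\}$; a linear change in $Y,Z$ (which fixes $P_1$, keeps $\sigma_1$ diagonal, and preserves the displayed shape) then moves $P_2$ to $(0:1:0)$.

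The heart of the proof, and the step I expect to be most delicate, is to pin down $\sigma_2\in G[P_2]$ of order $n$ and to combine the two symmetries. Preserving the fibers $(X:Z)$ of $\pi_{(0:1:0)}$ forces, by the linear algebra of the proof of Theorem \ref{standard form}, the matrix of $\sigma_2$ (suitably scaled) to be $\begin{pmatrix}1&0&0\\ m_{21}&\zeta_2&m_{23}\\ 0&0&1\end{pmatrix}$ with $\zeta_2$ a primitive $n$-th root of unity, since a finite-order transformation in characteristic $0$ is diagonalizable. Imposing $\sigma_2^*F=F$ (the scalar is $1$ because the coefficient of $Y^{2n}$ is fixed) and comparing the coefficient of the monomial $XY^{2n-1}$, which is absent from $F$, yields $2n\,h_{2n}m_{21}\zeta_2^{-1}=0$; since $h_{2n}=F(0:1:0)\ne 0$ and $n\ge 2$, this gives $m_{21}=0$. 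Diagonalizing the residual shear $Y\mapsto\zeta_2Y+m_{23}Z$ by an admissible coordinate change makes $\sigma_2=\mathrm{diag}(1,\zeta_2,1)$, and invariance of $G_n$ and $G_{2n}$ under $Y\mapsto\zeta_2Y$ then forces only $n$-divisible powers of $Y$, so that $C$ is given by $c_0X^{2n}+(\alpha Y^n+\beta Z^n)X^n+h_0Z^{2n}+h_nY^nZ^n+h_{2n}Y^{2n}=0$.

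Finally I would clear coefficients and count. Here $c_0\ne 0$ and $h_{2n}\ne 0$ since $P_1,P_2\notin C$; the remaining point, which needs smoothness, is that $h_0\ne 0$, i.e.\ $(0:0:1)\notin C$. Indeed, if $h_0=0$ then computing the three partial derivatives of $F$ at $(0:0:1)$ shows they all vanish (using $n\ge 2$), so $(0:0:1)$ would be a singular point of $C$, a contradiction. With $c_0,h_{2n},h_0$ all nonzero, rescaling $X,Y,Z$ brings $C$ to the asserted form. For the count, the ``if'' direction gives $\delta'[\ge n]\ge 3$, while Theorem \ref{outer}(2) gives $\delta'[\ge n]\le 3$ for $d\ge 14$; hence $\delta'[\ge n]=3$.
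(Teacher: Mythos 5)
Your proof is correct and follows the same architecture as the paper's: use Theorem \ref{outer} to force $P_2\in F[P_1]$, normalize $P_1=(1:0:0)$ and $\sigma_1=\mathrm{diag}(\zeta,1,1)$ via Theorem \ref{standard form}, move $P_2$ to $(0:1:0)$ by a shear in $(Y,Z)$, diagonalize $\sigma_2$, read off the shape of $F$, and get the count $3$ from Theorems \ref{standard form} and \ref{outer}. The one step you handle differently is the elimination of the entry $m_{21}$: the paper invokes the symmetric inclusion $P_1\in F[P_2]$, so that $(1:0:0)$ lies on the fixed line $aX+(\zeta-1)Y+bZ=0$ of $\sigma_2$, forcing $a=0$; you never establish $P_1\in F[P_2]$ and instead compare the coefficient of $XY^{2n-1}$ in $\sigma_2^{*}F=F$, using $h_{2n}=F(0,1,0)\ne 0$, $2n\ne 0$ in characteristic zero, and the fact that $X$-degree $1$ cannot occur in $F$ since $n\ge 7$ --- both routes are sound, the paper's being shorter, yours requiring one fewer fixed-point condition. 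You are also more careful than the paper at the final normalization: the paper writes ``we can assume $\alpha=\gamma=1$'' without justifying that the coefficient of $Z^{2n}$ is nonzero (only $P_1,P_2\notin C$ give the nonvanishing of the $X^{2n}$ and $Y^{2n}$ coefficients), whereas your computation of the three partials at $(0:0:1)$ shows $h_0=0$ would make $C$ singular there --- a genuine detail the paper elides, and worth recording.
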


\begin{proof}
The if part is due to Theorems \ref{standard form}. 
We prove the only-if part. 
Assume that $\delta'[\ge n] \ge 2$. 
Let $P_1, P_2 \in \mathbb{P}^2 \setminus C$ be points with $|G[P_1]|\ge n$, $G[P_2]| \ge n$. 
As in the proof of Theorem \ref{outer}, $P_2 \in F[P_1]$ and $P_1 \in F[P_2]$. 
By Theorem \ref{standard form}, for a suitable system of coordinates, we can assume that $P_1=(1:0:0)$ and there exists an element $\sigma_1 \in G[P_1]$ of order $n$ which is represented by the matrix 
$$A_{\sigma_1}=\left(\begin{array}{ccc} \zeta & 0 & 0 \\ 0 & 1 & 0 \\ 0 & 0 & 1 \end{array}\right),  
$$ 
where $\zeta$ is a primitive $n$-th root of unity. 
Then, the line given by $F[P_1]\setminus \{P_1\}$ is defined by $X=0$. 
Since $P_2 \in F[P_1] \setminus \{P_1\}$, $P_2=(0:1:a)$ for some $a \in K$. 
If we take the linear transformation $(X:Y:Z) \mapsto (X:Y:Z-aY)$, we can assume that $P_2=(0:1:0)$. 
Then, there exists an element $\sigma_2 \in G[P_2]$ of order $n$ which is represented by the matrix 
$$A_{\sigma_2}=\left(\begin{array}{ccc} 1 & 0 & 0 \\ a & \zeta & b \\ 0 & 0 & 1 \end{array}\right), $$ 
for some $a,b \in K$.  
Since the line given by $F[P_2]\setminus \{P_2\}$ is defined by $aX+(\zeta-1)Y+bZ=0$ and $P_1 \in F[P_2] \setminus \{P_2\}$, we have $a=0$. 
If we take 
$$ B=\left(\begin{array}{ccc} 1-\zeta & 0 & 0 \\ 0 & 1 & b \\ 0 & 0 & 1-\zeta \end{array}\right), $$
then 
$$ B^{-1}A_{\sigma_1}B=\left(\begin{array}{ccc} \zeta & 0 & 0 \\ 0 & 1 & 0 \\ 0 & 0 & 1 \end{array}\right), \ B^{-1}A_{\sigma_2}B=\left(\begin{array}{ccc} 1 & 0 & 0 \\ 0 & \zeta & 0 \\ 0 & 0 & 1 \end{array}\right).  $$
By taking the linear transformation represented by $B^{-1}$, we have the defining polynomial  
$$ F=X^{2n}+(aY^n+bZ^n)X^n+(\alpha Y^{2n}+\beta Y^nZ^n+\gamma Z^{2n}) $$
of $C$. 
We can assume $\alpha=\gamma=1$. 
We have the conclusion. 
By Theorems \ref{standard form} and \ref{outer}, we have $\delta'[\ge n]=3$. 
\end{proof}

\begin{corollary}
Assume that $d \ge 6$, $d$ is even, and $n=d/2$. 
Then, $\delta'[\ge n]\le 12$. 
Furthermore, if $d \ge 14$, then $\delta'[\ge n]=0, 1$ or $3$. 
\end{corollary}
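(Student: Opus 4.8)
The plan is to read both assertions off the two immediately preceding results, Theorem~\ref{outer}(1) and Theorem~\ref{d/2}, with only an elementary monotonicity computation in between; this is essentially the estimate displayed just before Theorem~\ref{d/2}, which I would now package into a proof.

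For the bound $\delta'[\ge n] \le 12$, I would start from Theorem~\ref{outer}(1), namely $\sum_{m \ge 3}\delta'[m](m-2) \le 3(d-2)$. Every outer quasi-Galois point counted by $\delta'[\ge n]$ has $|G[P]| = m$ for some $m \ge n$, so $m-2 \ge n-2 > 0$, and hence $(n-2)\,\delta'[\ge n] = (n-2)\sum_{m \ge n}\delta'[m] \le \sum_{m \ge n}\delta'[m](m-2) \le 3(d-2)$, the last step using $n = d/2 \ge 3$. Substituting $n = d/2$ gives $\delta'[\ge n] \le 3(d-2)/(n-2) = 6(d-2)/(d-4)$. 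Since $6(d-2)/(d-4) = 6 + 12/(d-4)$ is decreasing in $d$ for $d > 4$, its maximum over even $d \ge 6$ is attained at $d=6$, where it equals $12$. This yields $\delta'[\ge n] \le 12$.

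For the refinement when $d \ge 14$, I would simply invoke Theorem~\ref{d/2}. That theorem gives a dichotomy: either $\delta'[\ge n] \le 1$, or $\delta'[\ge n] \ge 2$, in which case $C$ is projectively equivalent to the displayed curve and $\delta'[\ge n] = 3$. Hence the only possibilities are $\delta'[\ge n] \in \{0,1,3\}$, as claimed.

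There is no genuine obstacle here, since all the substantive geometry is carried out in Theorem~\ref{outer} and Theorem~\ref{d/2}. The only point requiring a little care is the monotonicity step: because $n = d/2$ itself depends on $d$, one must confirm that the bound $3(d-2)/(n-2)$ is largest at the smallest admissible degree $d=6$ rather than growing with $d$; rewriting it as $6 + 12/(d-4)$ makes the decreasing behaviour transparent and pins the extremal value at $12$.
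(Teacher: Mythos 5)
Your proposal is correct and follows essentially the same route as the paper: the bound $\delta'[\ge n]\le 3\cdot\frac{d-2}{n-2}\le 12$ is exactly the displayed computation preceding Theorem~\ref{d/2} (your rewriting as $6+12/(d-4)$ just makes the extremality at $d=6$ explicit), and the $d\ge 14$ refinement is, as you say, a direct reading of the dichotomy in Theorem~\ref{d/2}. No gaps.
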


We consider the case where $d=6$ and $n=3$. 
We characterize smooth plane curves $C$ of degree $d=6$ with $\delta'[3]=12$. 
To do this, we introduce the notion of ``$G$-pairs''. 
Let $P_1, P_2 \in \mathbb{P}^2 \setminus C$ be points such that $P_1 \ne P_2$ and $|G[P_1]|=|G[P_2]|=n \ge 2$.  
We call the pair $(P_1, P_2)$ a {\it $G$-pair} if $\sigma_1(P_2)=P_2$ and $\sigma_2(P_1)=P_1$ for generators $\sigma_1 \in G[P_1]$ and $\sigma_2 \in G[P_2]$. 
By Corollary \ref{fixed locus}, the definition does not depend on the choice of generators. 

\begin{lemma} \label{pair 1}
Let $P_1, P_2 \in \mathbb{P}^2 \setminus C$ such that $P_1 \ne P_2$ and $|G[P_1]|=|G[P_2]|=n \ge 2$, and let $\sigma_i \in G[P_i]$ be a generator  for $i=1, 2$. 
If $\sigma_1(P_2)=P_2$, then $\sigma_2(P_1)=P_1$. 
In particular, $(P_1, P_2)$ is a $G$-pair. 
\end{lemma}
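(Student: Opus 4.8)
The plan is to reduce to the standard coordinates of Theorem \ref{standard form} for $P_1$ and then exploit the fact that, because $\sigma_1$ fixes $P_2$, it must normalize the group $G[P_2]$. First I would put $P_1=(1:0:0)$ with $\sigma_1$ represented by $\mathrm{diag}(\zeta,1,1)$ for a primitive $n$-th root of unity $\zeta$; by Corollary \ref{fixed locus} the fixed locus of $\sigma_1$ is $F[P_1]=\{P_1\}\cup\{X=0\}$. Since $\sigma_1(P_2)=P_2$ and $P_2\neq P_1$, the point $P_2$ lies on the eigenline $\{X=0\}$, and after a coordinate change involving only $Y,Z$ (which commutes with $\sigma_1$) I may assume $P_2=(0:1:0)$.

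Next I would invoke the conjugation formula $G[\tau(Q)]=\tau\,G[Q]\,\tau^{-1}$ for $\tau\in{\rm Aut}(C)$, already used in the proof of Proposition \ref{two quasi-Galois}; for smooth $C$ all the relevant maps are linear automorphisms, and this is where the common curve $C$ couples the two points. Applying it with $\tau=\sigma_1$ and the hypothesis $\sigma_1(P_2)=P_2$ yields $\sigma_1\,G[P_2]\,\sigma_1^{-1}=G[P_2]$, so $\sigma_1\sigma_2\sigma_1^{-1}$ is a nontrivial element of the cyclic group $G[P_2]$. By Corollary \ref{fixed locus} every nontrivial element of $G[P_2]$ has the same fixed locus $F[P_2]$, while the fixed locus of $\sigma_1\sigma_2\sigma_1^{-1}$ is $\sigma_1(F[P_2])$. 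Hence $\sigma_1(F[P_2])=F[P_2]$, and since $\sigma_1(P_2)=P_2$ the line $L_2:=F[P_2]\setminus\{P_2\}$ is preserved, i.e. $\sigma_1(L_2)=L_2$.

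Finally I would classify the $\sigma_1$-invariant lines. Because $\sigma_1=\mathrm{diag}(\zeta,1,1)$ with $\zeta\neq1$, a line fixed by $\sigma_1$ is either the eigenline $\{X=0\}$ or a line through the isolated fixed point $P_1=(1:0:0)$. By Corollary \ref{fixed locus} applied to $P_2$ we have $P_2\notin L_2$, whereas $P_2\in\{X=0\}$; thus $L_2\neq\{X=0\}$, forcing $P_1\in L_2\subset F[P_2]$. Therefore $\sigma_2(P_1)=P_1$, which together with the hypothesis shows that $(P_1,P_2)$ is a \emph{$G$-pair}, the independence of the chosen generators being guaranteed by Corollary \ref{fixed locus}. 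The main obstacle is the middle step: one must guarantee that conjugation of $\sigma_2$ by $\sigma_1$ lands back inside $G[P_2]$—that is, that $\sigma_1$ normalizes $G[P_2]$—which is exactly the conjugation formula and is the only place where smoothness of $C$ (making all these transformations linear automorphisms of one curve) is essential. Alternatively, the entire argument can be carried out by matrices: writing $A_{\sigma_2}$ as the matrix fixing $(0:1:0)$ with eigenvalues $\zeta_2,1,1$, the condition $P_1\in F[P_2]$ becomes the vanishing of a single off-diagonal entry, which $\sigma_1(L_2)=L_2$ forces directly.
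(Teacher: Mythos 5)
Your proof is correct, but it takes a genuinely different route from the paper's. The paper argues geometrically on the line $F[P_1]\setminus\{P_1\}$: by Proposition \ref{tangent2} it contains $d$ points $Q_1,\ldots,Q_d$ of $C$ whose tangent lines all pass through $P_1$; since this line passes through $P_2$, the automorphism $\sigma_2$ permutes the $Q_i$ and carries tangent lines to tangent lines, so $\sigma_2(\overline{P_1Q_1}\cap\overline{P_1Q_2})\subset\overline{P_1Q_i}\cap\overline{P_1Q_j}=\{P_1\}$, forcing $\sigma_2(P_1)=P_1$. You instead make a group-theoretic argument: $\sigma_1(P_2)=P_2$ implies $\sigma_1$ normalizes $G[P_2]$ (the conjugation formula $G[\tau(Q)]=\tau G[Q]\tau^{-1}$, which the paper indeed uses in the proof of Proposition \ref{two quasi-Galois}), so by the independence of the fixed locus in Corollary \ref{fixed locus} you get $\sigma_1(F[P_2])=F[P_2]$, hence $\sigma_1(L_2)=L_2$ for the line $L_2=F[P_2]\setminus\{P_2\}$; your classification of the $\sigma_1$-invariant lines for $\sigma_1=\mathrm{diag}(\zeta,1,1)$ is accurate (only $\{X=0\}$ and lines through $P_1$), and since $P_2\in\{X=0\}$ while $P_2\notin L_2$ you correctly conclude $P_1\in L_2\subset F[P_2]$. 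Your approach avoids Proposition \ref{tangent2} (and hence the ramification-theoretic input behind it) at the cost of invoking the normalizer argument plus the standard-form coordinates of Theorem \ref{standard form}; the paper's approach is coordinate-free and needs no conjugation, but depends on knowing the tangency configuration along $F[P_1]\setminus\{P_1\}$. Both arguments use the section's smoothness hypothesis in the same essential way, namely that all elements of $G[P_i]$ are linear, and your closing remark on the independence of generators (again via Corollary \ref{fixed locus}) correctly disposes of the final claim that $(P_1,P_2)$ is a $G$-pair.
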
 

\begin{proof} 
By the assumption, $P_2 \in F[P_1]\setminus \{P_1\}$. 
By Corollary \ref{fixed locus} and Proposition \ref{tangent2}, we have that the set $F[P_1]\setminus \{P_1\}$ is a line containing $d$ points $Q_1, \ldots, Q_d \in C$ with $\overline{P_1Q_i}=T_{Q_i}C$ for each $i$.
Since $F[P_1] \setminus \{P_1\}$ is a line passing through $P_2$, we have $\sigma_2(Q_1)=Q_i$ and $\sigma_2(Q_2)=Q_j$ for some $i, j$. 
Since $\overline{P_1Q_1}$ and $\overline{P_1Q_i}$ are tangent lines at $Q_1$ and $Q_i$ respectively, $\sigma_2(\overline{P_1Q_1})=\overline{P_1Q_i}$. 
Then, $\sigma_2(\overline{P_1Q_1} \cap \overline{P_1Q_2}) \subset \overline{P_1Q_i} \cap \overline{P_1Q_j}=\{P_1\}$. 
We have $\sigma_2(P_1)=P_1$.  
\end{proof}

We prove the following characterization theorem. 

\begin{theorem} \label{sextic} 
Let $C \subset \mathbb{P}^2$ be a smooth plane curve of degree $d=6$. 
Then, $$ \delta'[3] \le 12.$$ 
Furthermore, the equality holds if and only if $C$ is projectively equivalent to the curve defined by 
$$ X^6+Y^6+Z^6-10(X^3Y^3+Y^3Z^3+Z^3X^3)=0. $$
\end{theorem}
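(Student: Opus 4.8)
The bound $\delta'[3]\le 12$ follows directly from Theorem \ref{outer}(1) with $d=6$, $n=3$: the sum $\sum_{n\ge 3}\delta'[n](n-2)\le 3(d-2)=12$ forces $\delta'[3]\cdot 1\le 12$. The substantive work is the equality case, so I will spend my effort there. The plan is to show that $\delta'[3]=12$ forces a very rigid configuration of the twelve outer quasi-Galois points, use three of them in general position to put $C$ into a normal form via Theorem \ref{standard form}, and then extract the exact coefficients from the requirement that all twelve points be quasi-Galois.

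First I would analyze the incidence geometry. By Theorem \ref{outer}(1) the inequality $\delta'[3]\le 12$ is tight only when every flex has the minimal possible order and there is no waste, i.e. the $3d(d-2)=72$ flexes (counted with multiplicity) are distributed so that each of the twelve points $P$ consumes exactly $d=6$ simple flexes lying on the line $F[P]\setminus\{P\}$, with these flex-sets being pairwise compatible. The $G$-pair structure (Lemma \ref{pair 1}) is the key bookkeeping tool: for any two of the twelve points $P_1,P_2$, if a generator of $G[P_1]$ fixes $P_2$ then $(P_1,P_2)$ is automatically a $G$-pair, and the line $F[P_i]\setminus\{P_i\}$ contains the six tangency flexes. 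I would argue, using Corollary \ref{fixed locus} and the fact that each generator $\sigma_i$ has order $3$ and permutes the remaining eleven points, that the twelve points organize into a symmetric configuration — concretely, that one can select three of them, say $P_1,P_2,P_3$, in general position (not collinear) forming mutual $G$-pairs, exactly as in the proof of Theorem \ref{d/2}.

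Next I would run the normalization argument of Theorem \ref{d/2} essentially verbatim. Choosing $P_1,P_2,P_3$ to be a triangle of mutually $G$-paired points and diagonalizing the three commuting order-$3$ generators simultaneously, I can place $P_1=(1:0:0)$, $P_2=(0:1:0)$, $P_3=(0:0:1)$ with generators acting as multiplication by a primitive cube root $\zeta$ on the respective coordinate. Invariance of the defining sextic $F$ under all three forces every monomial $X^aY^bZ^c$ with $a+b+c=6$ appearing in $F$ to satisfy $a\equiv b\equiv c\equiv 0\pmod 3$, so $F$ is a polynomial in $X^3,Y^3,Z^3$:
\begin{equation*}
F=\alpha X^6+\beta Y^6+\gamma Z^6+aX^3Y^3+bY^3Z^3+cZ^3X^3.
\end{equation*}
After rescaling coordinates I may take $\alpha=\beta=\gamma=1$, which already matches the symmetric shape of Theorem \ref{d/2}.

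The main obstacle — and the part that goes beyond Theorem \ref{d/2} — is pinning down the coefficients $a,b,c$ to the single value giving the stated curve, since $\delta'[\ge 3]=3$ in Theorem \ref{d/2} whereas here I need all twelve points quasi-Galois. The idea is that demanding $\delta'[3]=12$ imposes many more quasi-Galois points than just the three coordinate vertices. Each additional outer quasi-Galois point $P$ yields, by Theorem \ref{standard form}, a further order-$3$ linear automorphism of $C$; accumulating these forces $\mathrm{Aut}(C)$ to contain a large group (the Hessian group of order $216$ acting on the plane via the $3$-torsion of an elliptic curve). I would therefore show that the twelve flex-configurations force the Hessian symmetry, and then determine $a=b=c$ together with the common value by imposing that the nine inflection-type points and the whole Hessian group stabilize $F$; smoothness of $C$ excludes the degenerate members of the pencil and singles out the curve
\begin{equation*}
X^6+Y^6+Z^6-10(X^3Y^3+Y^3Z^3+Z^3X^3)=0.
\end{equation*}
I expect the delicate step to be proving that twelve outer triple points genuinely force the full Hessian configuration (rather than merely the three coordinate vertices), and then verifying that exactly one smooth curve in the resulting two-parameter family realizes all twelve; this is where a careful count of the $G$-pairs and their induced linear automorphisms, combined with the classical geometry of the Hessian pencil, will be needed.
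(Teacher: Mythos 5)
Your first half tracks the paper's proof: the bound from Theorem \ref{outer}(1), the existence of a $G$-pair, the normalization of two paired points to $(1:0:0)$, $(0:1:0)$ with diagonal order-$3$ generators, and the resulting form $F=X^6+aY^6+bZ^6+cX^3Y^3+dY^3Z^3+eZ^3X^3$ (whence $(0:0:1)$ is quasi-Galois for free). Even here you skip a step the paper must supply: the existence of a $G$-pair is not automatic but is proved by counting orbits -- if no generator of any $G[P_1]$ fixed another quasi-Galois point, the $\sigma$-actions would produce either $3^2+3+1=13$ non-collinear quasi-Galois points or $3m+1$ collinear ones, both incompatible with $\delta'[3]=12$. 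But the genuine gap is in your second half. You reduce the determination of the coefficients to the claim that twelve points force the full Hessian group to act on $C$, and you explicitly flag that you cannot yet prove this; that claim \emph{is} the crux, so the proposal defers rather than closes the argument. Moreover, even granting Hessian invariance, concluding $a=-10$ requires knowing that the sextic invariants of the Hessian group form a one-dimensional space -- a classical fact you would still have to establish or cite.

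The paper closes this gap with elementary, explicit steps that your plan lacks. First, it shows some quasi-Galois point lies off the triangle $S=\{XYZ=0\}$ by a parity argument: the quasi-Galois points on a fixed line that form a $G$-pair with $P_1$ pair off in twos, so eleven of them on $\overline{P_2P_3}$ is impossible; the remaining cases are excluded by moving points with $\sigma_1,\sigma_2$. Normalizing that point to $(1:1:1)$ produces the nine points $P_{ij}=(\zeta^i:\zeta^j:1)$, accounting for all twelve. Second, it computes a generator $\tau\in G[(1:1:1)]$ outright, ruling out the bad case $i=2$ because it would send $(1:0:1)$ to the quasi-Galois point $(0:1:0)$, and lands on the matrix $A_\tau$ with $\zeta$ on the diagonal and $1$ elsewhere; the product $A_{\sigma_1}A_\tau A_{\sigma_1}A_\tau$ then yields the coordinate transpositions, forcing the symmetric form $F=X^6+Y^6+Z^6+a(X^3Y^3+Y^3Z^3+Z^3X^3)$, and comparing the coefficient of $X^4YZ$ in $\tau^*F$ gives $a=-10$ directly -- no invariant theory of the Hessian group needed. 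Finally, your converse direction is also underspecified: for the candidate curve one must verify that $(1:1:1)$ is genuinely quasi-Galois, i.e.\ that $\tau$ preserves the fibers of $\pi_{(1:1:1)}$, which the paper checks by the identity $\tau^*\left(\frac{x-y}{y-1}\right)=\frac{x-y}{y-1}$; mere invariance $\tau(C)=C$ does not suffice, since $\tau$ is not visibly a homology centered at $(1:1:1)$ in these coordinates.
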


\begin{proof} 
Assume that $\delta'[3]=12$. 
Let $\zeta$ be a primitive cubic root of unity. 

First, we prove that there exists a $G$-pair. 
Assume by contradiction that $\sigma(P_2) \ne P_2$ for any quasi-Galois points $P_1, P_2$ and any generator $\sigma \in G[P_1]$. 
If quasi-Galois points are not collinear, then we have $3^2+3+1=13$ quasi-Galois points, similar to the proof of Theorem \ref{inner}(3). 
If quasi-Galois points are collinear, we have $3m+1$ quasi-Galois points for some integer $m$ by the actions associated with one quasi-Galois point.
In each case, we have a contradiction.   
Therefore, we have points $P_1$ and $P_2$ such that $\sigma_1(P_2)=P_2$ for a generator $\sigma_1 \in G[P_1]$. 
By Lemma \ref{pair 1}, $(P_1, P_2)$ is a $G$-pair. 
For a suitable system of coordinates, we can assume that $P_1=(1:0:0)$ and $P_2=(0:1:0)$. 
Further, similar to the proof of Theorem \ref{d/2}, we can assume that generators $\sigma_1$ of $G[P_1]$ and $\sigma_2$ of $G[P_2]$ are given by the matrices 
$$ A_{\sigma_1}=\left(\begin{array}{ccc} 
\zeta & 0 & 0 \\
0 & 1 & 0 \\
0 & 0 & 1 \end{array}\right), \ 
A_{\sigma_2}=\left(\begin{array}{ccc} 
1 & 0 & 0 \\
0 & \zeta & 0 \\
0 & 0 & 1 \end{array}\right) $$
respectively, and $C$ is given by 
$$X^6+aY^6+bZ^6+cX^3Y^3+dY^3Z^3+eZ^3X^3=0, $$
where $a, b, c, d, e \in K$. 
Therefore, $P_3=(0:0:1)$ is also quasi-Galois. 

Next, we prove that $9$ quasi-Galois points other than $P_1, P_2$ or $P_3$ are not contained in the set $S:=\overline{P_1P_2}\cup\overline{P_2P_3}\cup\overline{P_3P_1}=\{XYZ=0\}$. 
If there exists one quasi-Galois point $\not \in S$, then there exist $9$ quasi-Galois points on $\mathbb P^2 \setminus S$, by the actions $\sigma_1$ and $\sigma_2$. 
Therefore, we assume that all quasi-Galois points are contained in $S$. 
If there exist two quasi-Galois points $P_4, P_5 \not\in \{P_1, P_2, P_3\}$ which are contained in $X=0$ and $Y=0$ respectively, then $(P_4, P_5)$ is not a $G$-pair, since $(P_4, P_1)$ is a $G$-pair, $(P_4, P_3)$ is not a $G$-pair, and $P_5 \in \overline{P_1P_3}$. 
We can find quasi-Galois points in $\mathbb{P}^2\setminus S$ by the actions associated with $P_4$. 
Therefore, we can assume that $9$ quasi-Galois points $\ne P_1, P_2, P_3$ are contained in one line $\subset S$.  
We can assume that such a line is $\overline{P_2P_3}$. 
Then, we have $11$ quasi-Galois points $P_2, \ldots, P_{12}$ on $\overline{P_2P_3}$. 
By Lemma \ref{pair 1}, $(P_i, P_1)$ is a $G$-pair for $i=2, \ldots, 12$. 
However, the number of collinear quasi-Galois points $P_i$ such that $(P_i, P_1)$ is a $G$-pair must be even. 
Indeed, for every $P_i \in \{P_2, \ldots, P_{12}\}$, there exists a unique $P_j \in \{P_2, \ldots, P_{12}\}$ such that $(P_i, P_j)$ is a $G$-pair. 
This is a contradiction. 

Let $(\alpha :\beta:1)$ be a quasi-Galois point on $\mathbb{P}^2 \setminus S$. 
By using the linear transformation given by $(X:Y:Z) \mapsto ((1/\alpha)X:(1/\beta)Y:Z)$, we can assume that $(\alpha:\beta:1)=(1:1:1)$. 
Then,  points 
$$ P_{ij}:=(\zeta^i:\zeta^j:1)$$
are quasi-Galois for $i, j=0, 1, 2$, and the set $\{P_1, P_2, P_3\} \cup \{P_{ij} \ | \ i, j=0, 1, 2\}$ consists of all quasi-Galois points for $C$. 

We compute a generator $\tau \in G[P_{00}]$, where $P_{00}=(1:1:1)$. 
We have $\tau(P_1)=P_{i0}$, $\tau(P_2)=P_{0j}$ and $\tau(P_3)=P_{kk}$ for some $i, j, k \ne 0$. 
We can assume that $k=2$ and $\tau(P_3)=(1:1:\zeta)$.
Then, $\tau$ is represented by the matrix
$$ A_{\tau}=\left( 
\begin{array}{ccc}
\lambda \zeta^i & \mu & 1 \\ 
\lambda & \mu \zeta^j & 1 \\
\lambda & \mu & \zeta 
\end{array}
\right), $$ 
for some $\lambda, \mu \in K \setminus \{0\}$. 
By using the condition $\tau((1:1:1))=(1:1:1)$, we have 
$$ \lambda=\frac{\zeta-1}{\zeta^i-1}, \ \mu=\frac{\zeta-1}{\zeta^j-1}. $$
If $i=2$, then $\lambda=1/(\zeta+1)=-\zeta$ and $\tau((1:0:1))=(0:1:0)$. 
Since $(1:0:1)$ is not quasi-Galois, this is a contradiction. 
We have $i=1$ and $\lambda=1$. 
Similarly, we have $j=1$ and $\mu=1$. 

Note that 
$$ A_{\sigma_1}A_{\tau}A_{\sigma_1}A_{\tau}=
\left(\begin{array}{ccc} 
3\zeta & 0 & 0 \\
0 & 0 & 3\zeta \\
0 & 3\zeta & 0 
\end{array}\right). $$
The linear transformation given by $(X:Y:Z) \mapsto (X:Z:Y)$ acts on $C$. 
Similarly, the linear transformation given by $(X:Y:Z) \mapsto (Z:Y:X)$ acts on $C$. 
Therefore, the defining equation of $C$ is of the form 
$$ F=X^6+Y^6+Z^6+a(X^3Z^3+Y^3Z^3+Z^3X^3)=0$$
for some $a \in K$. 
We consider the action by $\tau$. 
Polynomials $(\tau^{-1})^*F$ and $F$ are the same up to a constant. 
We consider the coefficient of $X^4YZ$. 
We have that the coefficient of $X^4YZ$ is $30\zeta$ for $(\zeta X+Y+Z)^6, (X+\zeta Y+Z)^6$ and $(X+Y+\zeta Z)^6$. 
The coefficient is $3\zeta$ for $(\zeta X+Y+Z)^3(X+\zeta Y+Z)^3$, $(X+\zeta Y+Z)^3(X+Y+\zeta Z)^3$ and $(X+Y+\zeta Z)^3(\zeta X+Y+Z)^3$. 
We have $a=-10$. 

On the contrary, we consider the curve $C$ given by 
$$ X^6+Y^6+Z^6-10(X^3Y^3+Y^3Z^3+Z^3X^3)=0. $$
By Theorem \ref{standard form}, points $P_1=(1:0:0)$, $P_2=(0:1:0)$ are quasi-Galois, and groups $G[P_1], G[P_2]$ are generated by the linear transformations $\sigma_1, \sigma_2$ given by 
$$ A_{\sigma_1}=\left(\begin{array}{ccc} 
\zeta & 0 & 0 \\
0 & 1 & 0 \\
0 & 0 & 1 \end{array}\right), \ 
A_{\sigma_2}=\left(\begin{array}{ccc} 
1 & 0 & 0 \\
0 & \zeta & 0 \\
0 & 0 & 1 \end{array}\right) $$
respectively. 
Let $\tau$ be the linear transformation given by the matrix 
$$ A_{\tau}=\left(\begin{array}{ccc}
\zeta & 1 & 1 \\
1 & \zeta & 1 \\
1 & 1 & \zeta 
\end{array} \right). $$
Then, $\tau(C)=C$ and 
$$ \tau^*\left(\frac{x-y}{y-1}\right)=\frac{x-y}{y-1}.  $$ 
Therefore, the point $(1:1:1)$ is quasi-Galois on $\mathbb{P}^2 \setminus \{XYZ=0\}$. 
By considering the actions $\sigma_1$ and $\sigma_2$, we have $\delta'[3] \ge 12$. 
By Theorem \ref{outer}(1), we have $\delta'[3]=12$. 
\end{proof} 

On the automorphism group, we have the following. 

\begin{theorem} \label{sextic, generated}
Let $C$ be the plane curve defined by $X^6+Y^6+Z^6-10(X^3Y^3+Y^3Z^3+Z^3X^3)=0$. 
Then, 
$$ G_3(C)={\rm Aut}(C). $$
\end{theorem}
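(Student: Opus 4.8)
The plan is to reduce everything to the simplicity of $\mathrm{Aut}(C)$ together with the normality of $G_3(C)$ supplied by Theorem \ref{generated}. The curve $C$ here is the Wiman sextic, for which $\mathrm{Aut}(C) \cong A_6$ is a simple group (the same fact invoked in the proof of Theorem \ref{simple}, via Wiman \cite{wiman}). Since a simple group has no normal subgroups other than $\{1\}$ and itself, it suffices to exhibit $G_3(C)$ as a \emph{nontrivial} normal subgroup of $\mathrm{Aut}(C)$, and the identity $G_3(C) = \mathrm{Aut}(C)$ will follow at once.

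First I would record normality: applying Theorem \ref{generated} with $n = 3$ (legitimate since $C$ is smooth) shows that $G_3(C)$ is a normal subgroup of $\mathrm{Aut}(C)$. Next I would verify nontriviality. By Theorem \ref{sextic} we have $\delta'[3] = 12$, so there is at least one quasi-Galois point $P \in \mathbb{P}^2 \setminus C$ with $|G[P]| = 3$. Because $C$ is smooth, $G[P] = G_0[P]$ consists of restrictions of linear transformations and is thus a subgroup of $\mathrm{Aut}(C)$ of order three; hence $G_3(C) \supset G[P] \neq \{1\}$. Combining the two observations, $G_3(C)$ is a nontrivial normal subgroup of the simple group $\mathrm{Aut}(C)$, whence $G_3(C) = \mathrm{Aut}(C)$.

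This argument runs parallel to that of Theorem \ref{simple}: there the existence of a quasi-Galois point with even $|G[P]|$ was produced from an involution through Proposition \ref{involution}, whereas here the existence of a point with $|G[P]| = 3$ is handed to us directly by the count $\delta'[3] = 12$ of Theorem \ref{sextic}. Consequently there is essentially no computational content to the statement; the only real input — and the closest thing to an obstacle — is the classical fact that $\mathrm{Aut}(C) \cong A_6$ is simple, which is already used in Theorem \ref{simple} and so requires no new verification. I would simply cite it and conclude.
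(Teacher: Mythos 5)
Your argument has a fatal misidentification at its very first step: the curve $X^6+Y^6+Z^6-10(X^3Y^3+Y^3Z^3+Z^3X^3)=0$ is \emph{not} the Wiman sextic, and its automorphism group is not $A_6$. As the remark immediately following this theorem in the paper records (citing Artebani--Dolgachev), ${\rm Aut}(C)$ is the Hessian group of order $216$. This group is not simple --- indeed, no group of order $216=2^3\cdot 3^3$ can be simple, by Burnside's $p^aq^b$-theorem, and the Hessian group in particular contains a normal subgroup of order $9$. So while your two supporting observations are correct (normality of $G_3(C)$ by Theorem \ref{generated}, and nontriviality via $\delta'[3]=12$ from Theorem \ref{sextic} together with $G[P]=G_0[P]\subset{\rm Aut}(C)$ for smooth $C$), they only show that $G_3(C)$ is one of several nontrivial normal subgroups of a solvable group of order $216$; for instance, nothing in your argument rules out $G_3(C)$ being the normal subgroup of order $9$. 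The reduction ``nontrivial normal subgroup of a simple group'' that works in Theorem \ref{simple} for the Klein quartic and the genuine Wiman sextic simply does not apply here, and the simplicity claim cannot be repaired.

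The paper's actual proof is necessarily computational: given $\sigma\in{\rm Aut}(C)\subset{\rm Aut}(\mathbb P^2)$, it uses the fact that $\sigma$ permutes the set $\Delta'$ of the twelve quasi-Galois points, and then composes $\sigma$ with explicit elements of $G_3(C)$ (the coordinate transpositions produced in the proof of Theorem \ref{sextic}, elements of the groups $G[P_{ij}]$, and the generators of $G[P_1]$, $G[P_2]$) to successively force $\sigma$ to fix $P_1$, then the line $F[P_1]\setminus\{P_1\}$, then $P_2$ and $P_3$. At that point $\sigma$ is represented by a diagonal matrix ${\rm diag}(\alpha,\beta,1)$, invariance of the defining equation forces $\alpha^3=\beta^3=1$, and the residual diagonal part is absorbed into $G[P_1]\cdot G[P_2]\subset G_3(C)$. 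If you want to keep the spirit of your approach, this stabilizer computation is exactly the extra work needed to replace the unavailable simplicity argument.
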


\begin{proof}
Let $P_1=(1:0:0)$, $P_2=(0:1:0)$, $P_3=(0:0:1)$, and let $P_{ij}=(\zeta^i:\zeta^j:1)$ for $i, j=0, 1, 2$, where $\zeta$ is a primitive cubic root of unity. 
Then, the set $\Delta':=\{P_1, P_2, P_3\} \cup \{P_{ij} \ | \ i, j=0, 1, 2\}$ consists of all quasi-Galois points $P$ with $|G[P]|=3$. 

Let $\sigma \in {\rm Aut}(C) \subset {\rm Aut}(\mathbb{P}^2)$.
Then, $\sigma$ acts on $\Delta'$. 
If $\sigma(P_1)=P_2$ or $P_3$, then there exists $\phi_1 \in G_3(C)$ such that $\phi_1\sigma(P_1)=P_1$,  since the automorphisms $(X:Y:Z) \mapsto (Y:X:Z)$ and $(X:Y:Z) \mapsto (Z:Y:X)$ are contained in $G_3(C)$ as in the proof of Theorem \ref{sextic}. 
If $\sigma(P_1)=P_{ij}$ for some $i, j$, then there exists $\phi_2 \in G[P_{kj}]$ for $k \ne i$ such that $\phi_2\sigma(P_1)=P_1$. 
Therefore, there exists $\phi \in G_3(C)$ such that $\phi\sigma(P_1)=P_1$. 
The line $F[P_1]\setminus \{P_1\}$ is a unique line $\ell$ such that $I_Q(C, \overline{P_1Q})=3$ for any $Q \in C \cap \ell$. 
By this fact and $\phi\sigma(P_1)=P_1$, $\phi\sigma(F[P_1]\setminus\{P_1\})=F[P_1]\setminus \{P_1\}$. 
Since $\Delta' \cap F[P_1]\setminus \{P_1\}=\{P_2, P_3\}$ and the automorphism $(X:Y:Z) \mapsto (X:Z:Y)$ is contained in $G_3(C)$, there exists $\phi_3 \in G_3(C)$ such that $\phi_3\sigma(P_i)=P_i$ for $i=1, 2, 3$. 
Then, $\phi_3\sigma$ is represented by the matrix of the form 
$$ \left(\begin{array}{ccc}
\alpha & 0 & 0 \\
0 & \beta & 0 \\
0 & 0 & 1 
\end{array}\right)$$
for some $\alpha, \beta \in K$. 
By considering the action on the defining equation, we have $\alpha^3=1$ and $\beta^3=1$. 
If we take 
$$
\phi_4=\left(\begin{array}{ccc}
\alpha^{-1} & 0 & 0 \\
0 & 1 & 0 \\
0 & 0 & 1 
\end{array}\right) \in G[P_1], \ 
\phi_5=\left(\begin{array}{ccc}
1 & 0 & 0 \\
0 & \beta^{-1} & 0 \\
0 & 0 & 1 
\end{array}\right) \in G[P_2],$$ 
then $\phi_5\phi_4\phi_3\sigma=1$ on $\mathbb{P}^2$. 
Therefore, $\sigma=\phi_3^{-1}\phi_4^{-1}\phi_5^{-1} \in G_3(C)$. 
\end{proof}

\begin{remark}
For the curve defined by $X^6+Y^6+Z^6-10(X^3Y^3+Y^3Z^3+Z^3X^3)=0$, it is known that the group ${\rm Aut}(C)$ is isomorphic to the Hessian group of order $216$ (\cite{artebani-dolgachev}). 
\end{remark}

\section{Fermat curves} 
In this section, we assume that $C$ is the Fermat curve $X^d+Y^d+Z^d=0$ of degree $d \ge 4$. 
Let $P_1=(1:0:0)$, $P_2=(0:1:0)$, and let $P_3=(0:0:1)$. 
It follows from Yoshihara--Miura's theorem \cite{miura-yoshihara1, yoshihara1} that $|G[P_i]|=d$ (i.e. $P_i$ is outer Galois) for each $i$, and $\delta'[d]=3$.

\begin{proposition} \label{Fermat1}
\begin{itemize}
\item[(1)] Let $d=2n$, and let $\zeta$ satisfy $\zeta^{2n}=1$. 
Then, points $(\zeta:0:1)$, $(1:\zeta:0)$ and $(1:0:\zeta)$ are quasi-Galois points in $\mathbb P^2 \setminus C$. 
\item[(2)] Let $d=2n+1$, and let $\eta$ satisfy $\eta^{2n+1}=-1$. 
Then, points $(\eta:0:1)$, $(1:\eta:0)$ and $(1:0:\eta)$ are quasi-Galois points in $C$. 
\end{itemize}
\end{proposition}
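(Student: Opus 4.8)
The plan is to produce, for each listed point $P$, a single non-trivial automorphism $\tau$ of $C$ lying in $G[P]$. Since $C$ is smooth, $G[P]=G_0[P]$ by the remark following the definition of $G_0[P]$, so it suffices to find one linear automorphism $\tau\ne 1$ of $C$ that preserves every line through $P$; such a $\tau$ satisfies $\tau(C\cap\ell\setminus\{P\})\subset\ell$ for all lines $\ell\ni P$, and hence gives $|G[P]|\ge 2$.

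The key observation is uniform for both parts. For a scalar $a\in K$ with $a^d=1$, consider
$$\tau_a:(X:Y:Z)\mapsto (aZ:Y:a^{-1}X).$$
It preserves $C$, because $(aZ)^d+Y^d+(a^{-1}X)^d=Z^d+Y^d+X^d$. A quick eigenvalue computation shows that $\tau_a$ is an involution whose fixed locus in $\mathbb{P}^2$ is the point $(-a:0:1)$ together with the line joining $(0:1:0)$ and $(a:0:1)$; in other words $\tau_a$ is the harmonic homology with center $(-a:0:1)$. Being a homology with this center, $\tau_a$ fixes $(-a:0:1)$ and maps every line through it to itself, so $\tau_a\in G[(-a:0:1)]$. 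Moreover the fixed locus of $\tau_a$, a line together with a point, cannot contain the curve $C$ of degree $d\ge 4$, so $\tau_a|_C\ne 1$, and $(-a:0:1)$ is quasi-Galois.

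It then remains only to match the center $(-a:0:1)$ to the prescribed points by choosing $a$, which is exactly where the parity of $d$ and the hypotheses on $\zeta,\eta$ enter. For part (1) I would take $a=-\zeta$ with $\zeta^{2n}=1$: then $a^d=(-1)^{2n}\zeta^{2n}=1$, the center is $(\zeta:0:1)$, and $\zeta^d+1=2\ne 0$ shows this point lies in $\mathbb{P}^2\setminus C$. For part (2) I would take $a=-\eta$ with $\eta^{2n+1}=-1$: then $a^d=(-1)^{2n+1}\eta^{2n+1}=(-1)(-1)=1$, the center is $(\eta:0:1)$, and $\eta^d+1=0$ shows this point lies on $C$. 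In each case $\tau_a$ exhibits $(\zeta:0:1)$ (resp. $(\eta:0:1)$) as a quasi-Galois point in the required locus.

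Finally, the two remaining points in each part are the images of $(\zeta:0:1)$ (resp. $(\eta:0:1)$) under suitable coordinate permutations, which are automorphisms of the Fermat curve; equivalently, one runs the identical construction with a different pair among $X,Y,Z$ in place of $(X,Z)$. The only step demanding genuine care is the identification of $\tau_a$ as a homology, i.e. checking that it fixes every line through its center; but this is precisely the eigenvalue computation above, and everything else is bookkeeping with roots of unity.
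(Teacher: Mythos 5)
Your proof is correct, and it reaches the same destination as the paper by a recognizably different route. The paper first uses the symmetries $(X:Y:Z)\mapsto(Z:X:Y)$ and $(X:Y:Z)\mapsto(\zeta^{-1}X:Y:Z)$ to reduce to $P=(1:0:1)$ (resp. $(-1:0:1)$), then applies the coordinate change $\tilde{X}=\frac{1}{2}(X+Z)$, $\tilde{Z}=\frac{1}{2}(X-Z)$ and expands $(X+Z)^d+(X-Z)^d$ to see that the transformed equation contains only even powers of one variable, so that the if-part of Theorem \ref{standard form} applies. Your harmonic homology $\tau_a:(X:Y:Z)\mapsto(aZ:Y:a^{-1}X)$ is exactly the conjugate, back in the original coordinates, of the standard-form involution $(X:Y:Z)\mapsto(-X:Y:Z)$ that the paper's argument implicitly produces (for instance, with $a=-1$ your $\tau_a$ is $(X:Y:Z)\mapsto(-Z:Y:-X)$, the pullback of $\tilde{X}\mapsto-\tilde{X}$, with center $(1:0:1)$); but you verify membership in $G[P]$ directly from the definition via the center-and-axis geometry of the homology, rather than through the standard-form criterion, which spares you both the coordinate change and the binomial expansion. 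What the paper's route buys is uniformity with its general machinery (the same Theorem \ref{standard form} computation recurs throughout); what yours buys is a self-contained, geometrically transparent argument that also makes the on-curve versus off-curve dichotomy immediate from $\zeta^d+1=2\neq 0$ versus $\eta^d+1=0$ (legitimate since $p=0$ is in force from Section 4 on). Your closing reduction of the remaining points is also sound: the transposition $(X:Y:Z)\mapsto(Z:Y:X)$ and the cyclic permutation carry $(\zeta:0:1)$ to $(1:0:\zeta)$ and $(1:\zeta:0)$, and conjugating $G[P]$ by an automorphism of $C$ preserves quasi-Galoisness, just as in the paper's own symmetry reduction.
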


\begin{proof} 
Let $P$ be one of the points given in (1). 
Since $C$ is invariant under the linear transformations $(X:Y:Z) \mapsto (Z:X:Y)$ and $(X:Y:Z) \mapsto (\zeta^{-1}X:Y:Z)$, we can assume that $P=(1:0:1)$. 
We set 
$$ \tilde{X}=\frac{1}{2}(X+Z),\ \tilde{Y}=Y,\ \tilde{Z}=\frac{1}{2}(X-Z)$$
and take the linear transformation $\phi:(X:Y:Z) \mapsto (\tilde{X}:\tilde{Y}:\tilde{Z})$.
Then, $\phi^{-1}(P)=(1:0:0)$ and $\phi^{-1}(C)$ is given by 
$$ (X+Z)^{2n}+(X-Z)^{2n}+Y^{2n}=2\sum_{k=0}^{n} \binom{2n}{2k}X^{2n-2k}Z^{2k}+Y^{2n}= 0. $$
By Theorem \ref{standard form}, $\phi^{-1}(P)$ is quasi-Galois. 
Therefore, $P$ is quasi-Galois. 

Let $P$ be one of the points given in (2). 
Since $C$ is invariant under the linear transformations $(X:Y:Z) \mapsto (Z:X:Y)$ and $(X:Y:Z) \mapsto (-\eta^{-1}X:Y:Z)$, we can assume that $P=(-1:0:1)$. 
We set 
$$ \tilde{X}=\frac{1}{2}(X+Z),\ \tilde{Y}=Y,\ \tilde{Z}=\frac{1}{2}(X-Z)$$
and take the linear transformation $\phi:(X:Y:Z) \mapsto (\tilde{X}:\tilde{Y}:\tilde{Z})$.
Then, $\phi^{-1}(P)=(0:0:1)$ and $\phi^{-1}(C)$ is given by 
$$ (X+Z)^{2n+1}+(X-Z)^{2n+1}+Y^{2n+1}=2\sum_{k=0}^{n} \binom{2n}{2k}X^{2n+1-2k}Z^{2k}+Y^{2n+1}= 0. $$
By Theorem \ref{standard form}, $\phi^{-1}(P)$ is quasi-Galois. 
Therefore, $P$ is quasi-Galois. 
\end{proof}

\begin{proposition} \label{Fermat2}
If a point $P$ satisfies $1<|G[P]|<d$, then $|G[P]|=2$ and $P \in \{XYZ=0\}$. 
Furthermore, the number of such points is at most $3d$. 
\end{proposition}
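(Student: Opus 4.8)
The plan is to reduce the whole statement to a classification of \emph{homologies} (projective transformations fixing a line pointwise and a center off that line) inside ${\rm Aut}(C)$. First I would set up the dictionary. Since $C$ is smooth of degree $d\ge 4$ we have $G[P]=G_0[P]$, and by Theorem \ref{standard form} this group is cyclic; a generator of order $m:=|G[P]|$ is, after a linear change of coordinates, the map $\mathrm{diag}(\zeta,1,1)$ with $\zeta$ a primitive $m$-th root of unity. Hence, by Corollary \ref{fixed locus}, every nontrivial element of $G[P]$ is a homology with center $P$ and axis the line $F[P]\setminus\{P\}$. Conversely, any homology $\tau\in{\rm Aut}(C)$ with center $P$ sends every line through $P$ to itself, so $\pi_P\circ\tau=\pi_P$ and therefore $\tau\in G[P]$. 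Thus $G[P]\setminus\{1\}$ is exactly the set of homologies of ${\rm Aut}(C)$ with center $P$, and it suffices to list all homologies in ${\rm Aut}(C)$ and sort them by center, reading off $|G[P]|$ as the order of the resulting cyclic group.

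Next I would invoke the standard description of the automorphism group of the Fermat curve in characteristic $0$: for $d\ge 4$ every element of ${\rm Aut}(C)$ is of monomial type, i.e.\ a coordinate permutation $\rho\in S_3$ composed with a diagonal map $(X:Y:Z)\mapsto(\xi X:\eta Y:Z)$ with $\xi^d=\eta^d=1$ (this is the one external input, to be cited). Write $\sigma=\rho\cdot D$ and analyze eigenvalues by the cycle type of $\rho$. If $\rho$ is a $3$-cycle the characteristic polynomial has the form $\lambda^3=c$ with $c\ne 0$, whose three roots are distinct, so $\sigma$ fixes three points and no line and is never a homology. If $\rho=\mathrm{id}$ then $\sigma$ is diagonal, and it is a homology precisely when two of its eigenvalues coincide projectively, which forces the center into $\{P_1,P_2,P_3\}$ with axis a coordinate line; the full group at such a center is generated by $\mathrm{diag}(\omega,1,1)$ with $\omega$ a primitive $d$-th root of unity and has order exactly $d$, so these are the (outer) Galois points, excluded by the hypothesis $|G[P]|<d$.

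The remaining case carries the content. If $\rho$ is a transposition, say $\sigma:(X:Y:Z)\mapsto(\beta Y:\alpha X:Z)$, then Fermat invariance forces $\alpha^d=\beta^d=1$ once the fixed slot is normalized to $1$, and the eigenvalues of $\sigma$ are $\pm\sqrt{\alpha\beta}$ together with $1$. Hence $\sigma$ is a homology if and only if $\alpha\beta=1$, in which case the eigenvalues are $\{1,-1,1\}$, so $\sigma$ is an involution whose center is the $(-1)$-eigenpoint $(1:-\alpha:0)$, lying on $\{Z=0\}$; the other two transpositions give centers on $\{X=0\}$ and $\{Y=0\}$. I would then conclude that a point $P$ with $1<|G[P]|<d$ cannot have a diagonal generator (these yield $|G[P]|=d$) nor a $3$-cycle-type or other transposition-power generator (these are not homologies), so its generator must be one of these transposition-involutions; therefore $m=|G[P]|=2$ and $P\in\{XYZ=0\}$. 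Moreover $|G[P]|$ equals $2$ exactly, since the enumeration shows that the only homology centered at such a point is the single involution found: diagonal and $3$-cycle homologies never have center off $\{P_1,P_2,P_3\}$, and the three transposition families have centers on three different coordinate lines.

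Finally, counting the centers $(1:-\alpha:0)$, $(0:1:-\alpha)$, $(-\alpha:0:1)$ over the $d$ roots $\alpha^d=1$ gives $d$ points on each coordinate line, all distinct from $P_1,P_2,P_3$ and lying on lines meeting only at the $P_i$, hence $3d$ distinct points (in fact all realized, consistent with Proposition \ref{Fermat1}), which yields the asserted bound. The step I expect to be the main obstacle is not the eigenvalue bookkeeping, which is routine, but controlling the \emph{whole} group $G[P]$ rather than a single element: one must be certain that a transposition-center admits no homology of order exceeding $2$ and that the diagonal centers genuinely attain order $d$. Both are settled by the complete classification of homologies above, together with Corollary \ref{two groups}, which guarantees that each nontrivial homology belongs to $G[P]$ for a unique center $P$.
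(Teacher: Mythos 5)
Your proof is correct, but it takes a genuinely different route from the paper's. Your key move is the dictionary identifying $G[P]\setminus\{1\}$ with the set of homologies in ${\rm Aut}(C)$ centered at $P$ (one direction from Theorem \ref{standard form} and Corollary \ref{fixed locus}, the other from the fact that a homology preserves every line through its center), after which you import the classification of ${\rm Aut}$ of the Fermat curve as monomial transformations $(\mu_d)^2\rtimes S_3$ and sort homologies by cycle type; your eigenvalue analysis is accurate ($3$-cycles give three distinct eigenvalues and no homology, diagonal homologies have centers only at $P_1,P_2,P_3$ where the order is exactly $d$, and transposition-type maps are homologies precisely when $\alpha\beta=1$, yielding involutions with centers $(1:-\alpha:0)$, $\alpha^d=1$, on $\{XYZ=0\}$). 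The paper avoids the automorphism-group classification entirely and uses only the weaker quoted input that the outer Galois points are exactly $P_1,P_2,P_3$: a nontrivial $\sigma\in G[P]$ preserves each line through $P$ and hence fixes the unique outer Galois point on it, so $P\notin\{XYZ=0\}$ would force $\sigma$ to fix four points in general position and be the identity; for $P\in\overline{P_1P_2}$, the restriction $\sigma|_{\overline{P_1P_2}}$ swaps $P_1,P_2$ while fixing $P$, hence has order two, and the standard form makes the order of $\sigma$ equal to that of its restriction, giving $|G[P]|=2$. For the count, the paper uses a parity argument from Lemma \ref{subgroup} (for $d$ odd, $2\mid\deg\pi_P$ forces $P\in C$, and $C\cap\{XYZ=0\}$ has $3d$ points) together with an explicit $2\times 2$ fixed-point computation ($x^2=\zeta^k\eta^2$, so $x^d=\pm1$) giving at most $d$ points per coordinate line when $d$ is even. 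What your approach buys is stronger output: the exact count $3d$ with every point identified, and a structural statement unifying Propositions \ref{Fermat1} and \ref{Fermat2}; what it costs is dependence on the nontrivial external theorem describing ${\rm Aut}$ of Fermat curves (valid for $d\ge 4$ in characteristic $0$, so your use is legitimate, but it must be properly cited), whereas the paper's argument is self-contained given the Galois-point results it already quotes. One small point to make explicit if you write this up: your claim that a transposition-center admits no homology of order exceeding $2$ should be phrased as a consequence of the completed classification (no diagonal or $3$-cycle homology has such a center, and within the relevant transposition family the center determines $\alpha$, hence the involution uniquely), which you do gesture at correctly.
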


\begin{proof}
We prove that $P \in \{ XYZ=0 \}$. 
Assume by contradiction that $P \not \in \{ XYZ=0 \}$. 
Let $\sigma \in G[P] \setminus \{1\}$. 
There exists a unique outer Galois point on the line $\overline{PP_1}$. 
Then, $\sigma(P_1)=P_1$. 
Similarly, $\sigma(P_2)=P_2$ and $\sigma(P_3)=P_3$. 
By Corollary \ref{fixed locus}, $\sigma(P)=P$. 
Since $\sigma$ fixes non-collinear three points $P_1, P_2, P_3$ and the point $P \not\in \bigcup_{i \ne j}\overline{P_iP_j}$, $\sigma$ is identity on $\mathbb P^2$. 
This is a contradiction. 
 
We can assume that $P \in \overline{P_1P_2}$. 
Let $\sigma \in G[P]$ be a generator. 
Since outer Galois points on the line $\overline{P_1P_2}$ are exactly two, $\sigma$ acts on the set $\{P_1, P_2\}$. 
Note that $\sigma(P)=P$. 
Then, the restriction $\sigma|_{\overline{P_1P_2}}$ is of order two. 
By Theorem \ref{standard form}, the order of $\sigma$ is equal to the order of $\sigma|_\ell$ for each line $\ell \ni P$. 
Therefore, $|G[P]|=2$. 

We prove the latter assertion. 
If $d$ is odd, then $P \in C$. 
The assertion is obvious, since the number of points in $C \cap \{XYZ=0\}$ is $3d$. 
Assume that $d$ is even. 
Let $P \in \overline{P_1P_2}$ with $|G[P]|=2$, and let $\sigma \in G[P]$ be a generator. 
Let $\zeta$ be a primitive $d$-th root of unity, and let $\eta$ be a $d$-th root of $-1$. 
Then, $C \cap \overline{P_1P_2}=\{(\zeta^i\eta:1:0) \ |\ i=0, \ldots, d-1\}$. 
We identify $\overline{P_1P_2}$ with $\mathbb P^1$ by the isomorphism $(X:Y:0) \mapsto (X:Y)$. 
Then, $P_1=(1:0)$ and $P_2=(0:1)$. 
Since $\sigma$ acts on the set $\{P_1, P_2\}$, $\sigma$ is represented by the matrix 
$$ \left(\begin{array}{cc} 0 & a \\ 1 & 0 \end{array} \right). $$
Then, 
$$ \left(\begin{array}{cc} 0 & a \\ 1 & 0 \end{array} \right)\left(\begin{array}{c} \zeta^i \eta \\ 1 \end{array} \right)=\left(\begin{array}{c} a \\ \zeta^i \eta \end{array}\right).  $$
Since $\sigma$ acts on $C \cap \overline{P_1P_2}$, there exists $j$ such that $(a:\zeta^i\eta)=(\zeta^j\eta:1)$. 
We have $a=\zeta^{k}\eta^2$ for some $k$. 
We consider the fixed locus of $\sigma$. 
Assume that 
$$ \left(\begin{array}{c} x \\ 1 \end{array} \right)=\left(\begin{array}{cc} 0 & \zeta^k\eta^2 \\ 1 & 0 \end{array} \right)\left(\begin{array}{c} x \\ 1 \end{array} \right)=\left(\begin{array}{c} \zeta^k\eta^2 \\ x \end{array}\right).$$
Then, $x^2=\zeta^k\eta^2$ and $x^d=(\zeta^{\frac{d}{2}})^k\eta^d=(-1)^k(-1)$. 
If $x^d=-1$, then $(x:1) \in C \cap \overline{P_1P_2}$. 
If we take $P=(x:1) \not\in C$, then $x^d=1$. 
Therefore, we have at most $d$ quasi-Galois points on the line $\overline{P_1P_2}$. 
\end{proof}

\begin{theorem} \label{Fermat, number}
Let $C$ be the Fermat curve of degree $d$. 
Then, we have the following.
\begin{itemize}
\item[(1)] $\delta'[d]=3$. 
\item[(2)] If $d$ is even, then $\delta'[\ge 2]=\delta'[d]+\delta'[2]=3+3d$ and $\delta[\ge 2]=0$. 
\item[(3)] If $d$ is odd, then $\delta'[\ge 2]=\delta'[d]=3$ and $\delta[\ge 2]=\delta[2]=3d$. 
\end{itemize} 
\end{theorem}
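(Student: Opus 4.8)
The plan is to treat assertion (1) as already recalled and to derive (2) and (3) by matching the explicit quasi-Galois points produced in Proposition \ref{Fermat1} against the structural upper bound of Proposition \ref{Fermat2}, sorting the resulting points according to whether they lie on $C$. Assertion (1), $\delta'[d]=3$, is the theorem of Yoshihara--Miura recalled above: the three coordinate vertices are outer Galois and there are no further Galois points. I would record at the outset the basic degree count: for a smooth point $P\in C$ the projection has degree $\deg\pi_P=d-1$, whence $|G[P]|\le d-1<d$ by Lemma \ref{subgroup}; together with $\delta'[d]=3$ this shows that the only points with $|G[P]|=d$ are the three vertices, all lying in $\mathbb P^2\setminus C$.

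Next I would determine all $P$ with $1<|G[P]|<d$. By Proposition \ref{Fermat2} each such point lies on $\{XYZ=0\}$, has $|G[P]|=2$, and there are at most $3d$ of them. Conversely, Proposition \ref{Fermat1}, combined with the invariance of $C$ under the coordinate permutations and under scaling by $d$-th roots of unity, supplies $d$ quasi-Galois points on each of the three coordinate lines, hence $3d$ distinct such points in total (distinct because two coordinate lines meet only in a vertex, which is not among them). None of these $3d$ points is a vertex, so by the previous paragraph none has $|G[P]|=d$; by Proposition \ref{Fermat2} each therefore has $|G[P]|=2$. Comparing with the bound $3d$, these $3d$ points constitute exactly the set of points with $1<|G[P]|<d$.

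It then remains to split this set according to membership in $C$, which is governed by a root-of-unity condition. For $d$ even the points have the form $(\zeta:0:1)$, $(1:\zeta:0)$, $(0:\zeta:1)$ with $\zeta^d=1$, so $\zeta^d+1=2\neq 0$ and all $3d$ lie in $\mathbb P^2\setminus C$, giving $\delta'[2]=3d$ and $\delta[2]=0$. For $d$ odd they have the form $(\eta:0:1)$, $(1:\eta:0)$, $(0:\eta:1)$ with $\eta^d=-1$, so $\eta^d+1=0$ and all $3d$ lie on $C$ (in fact they exhaust $C\cap\{XYZ=0\}$), giving $\delta[2]=3d$ and $\delta'[2]=0$. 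Since inner points satisfy $|G[P]|<d$, Proposition \ref{Fermat2} gives $\delta[\ge 2]=\delta[2]$. Assembling the counts yields, for $d$ even, $\delta[\ge 2]=0$ and $\delta'[\ge 2]=\delta'[d]+\delta'[2]=3+3d$, and for $d$ odd, $\delta[\ge 2]=\delta[2]=3d$ and $\delta'[\ge 2]=\delta'[d]=3$.

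Given Propositions \ref{Fermat1} and \ref{Fermat2}, the argument is essentially bookkeeping; the one step demanding care is the exactness of the count --- confirming that the $3d$ points of Proposition \ref{Fermat1} are genuinely distinct and saturate the upper bound $3d$ of Proposition \ref{Fermat2}, and that the dichotomy $\zeta^d=1$ versus $\eta^d=-1$ cleanly sends every such point off $C$ in the even case and onto $C$ in the odd case. I expect this verification of distinctness and of the inner/outer partition to be the main, if mild, obstacle.
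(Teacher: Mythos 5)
Your proposal is correct and takes essentially the same route as the paper, whose entire proof consists of citing Yoshihara--Miura for assertion (1) and deducing (2) and (3) from Propositions \ref{Fermat1} and \ref{Fermat2}. The bookkeeping you carry out explicitly --- distinctness of the $3d$ points, saturation of the upper bound of Proposition \ref{Fermat2}, and the inner/outer dichotomy via $\zeta^d=1$ versus $\eta^d=-1$ --- is exactly what the paper leaves implicit.
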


\begin{proof}
Assertion (1) is nothing but Yoshihara--Miura's theorem \cite{miura-yoshihara1, yoshihara1}. 
By Propositions \ref{Fermat1} and \ref{Fermat2}, we have assertions (2) and (3).  
\end{proof}

\begin{theorem} \label{Fermat, generated}
$$ \langle G_2(C), G_d(C) \rangle={\rm Aut}(C).  $$
\end{theorem}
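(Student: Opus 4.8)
The plan is to identify ${\rm Aut}(C)$ concretely as an extension of the symmetric group $S_3$ by the diagonal torus, and then to recognize the torus as $G_d(C)$ and the relevant transpositions as elements of $G_2(C)$. Since $d \ge 4$ and $C$ is smooth, every automorphism of $C$ is the restriction of a linear transformation of $\mathbb P^2$. The set of quasi-Galois points $P$ with $|G[P]|=d$ is intrinsic, because $G[\sigma(P)]=\sigma G[P]\sigma^{-1}$ for $\sigma \in {\rm Aut}(C)$, so $|G[\sigma(P)]|=|G[P]|$. As any inner point satisfies $|G[P]|\mid (d-1)<d$ by Lemma \ref{subgroup}, Theorem \ref{Fermat, number}(1) shows that this set is exactly $\{P_1,P_2,P_3\}$. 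Hence I obtain a homomorphism $\rho\colon {\rm Aut}(C)\to S_3$ recording the induced permutation of $\{P_1,P_2,P_3\}$, and I would study $\ker\rho$ and the image of $\rho$ in turn.

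First I would compute the kernel. An automorphism fixing the three coordinate points $P_1,P_2,P_3$ is given by a diagonal matrix ${\rm diag}(\alpha,\beta,\gamma)$, and invariance of $X^d+Y^d+Z^d$ forces $\alpha^d=\beta^d=\gamma^d$; thus $\ker\rho$ is the diagonal torus $T\cong(\mathbb Z/d\mathbb Z)^2$. I would then check that $T=G_d(C)$: for a primitive $d$-th root of unity $\zeta$ the automorphism ${\rm diag}(\zeta,1,1)$ preserves $C$ and fixes every line $bY+cZ=0$ through $P_1$, hence lies in $G[P_1]$; as $|G[P_1]|=d$ this gives $G[P_1]=\langle {\rm diag}(\zeta,1,1)\rangle$, and similarly $G[P_2]=\langle{\rm diag}(1,\zeta,1)\rangle$, $G[P_3]=\langle{\rm diag}(1,1,\zeta)\rangle$. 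These three cyclic groups lie in $T$ and generate it, so $G_d(C)=\ker\rho=T$.

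Next I would show that the transpositions of coordinates lie in $G_2(C)$, forcing $\rho$ to be surjective. The involution $\tau_{12}\colon(X:Y:Z)\mapsto(Y:X:Z)$ preserves $C$ and fixes every line $a(X+Y)+cZ=0$ through the point $(1:-1:0)$, so $\pi_{(1:-1:0)}\circ\tau_{12}=\pi_{(1:-1:0)}$ and $\tau_{12}\in G[(1:-1:0)]$. Since $(1:-1:0)\in\{XYZ=0\}$ and $(1:-1:0)\notin\{P_1,P_2,P_3\}$, Proposition \ref{Fermat2} gives $|G[(1:-1:0)]|=2$, whence $\tau_{12}\in G_2(C)$. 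Running the same argument for $\tau_{23}\colon(X:Y:Z)\mapsto(X:Z:Y)$ and $\tau_{13}\colon(X:Y:Z)\mapsto(Z:Y:X)$, with isolated fixed points $(0:1:-1)$ and $(1:0:-1)$, shows $\tau_{23},\tau_{13}\in G_2(C)$. As $\rho(\tau_{12}),\rho(\tau_{23}),\rho(\tau_{13})$ are the three transpositions generating $S_3$, the map $\rho$ is surjective.

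Finally I would assemble these facts. Given $\sigma\in{\rm Aut}(C)$, choose a product $w$ of the $\tau_{ij}$ with $\rho(w)=\rho(\sigma)$; then $w\in G_2(C)$ and $\sigma w^{-1}\in\ker\rho=G_d(C)$, so $\sigma=(\sigma w^{-1})w\in\langle G_2(C),G_d(C)\rangle$. This yields ${\rm Aut}(C)\subseteq\langle G_2(C),G_d(C)\rangle$, while the reverse inclusion holds by definition. The main obstacle I anticipate is not computational but structural: pinning down $\{P_1,P_2,P_3\}$ as the complete intrinsic set of points with $|G[P]|=d$ (so that $\rho$ is well defined and has the stated kernel), and invoking Proposition \ref{Fermat2} correctly to certify that the fixed points of the transpositions carry groups of order exactly $2$; once these are in place, the remaining steps are routine linear algebra on the coordinate torus.
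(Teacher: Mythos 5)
Your proposal is correct and follows essentially the same route as the paper: both arguments let ${\rm Aut}(C)$ act on the three outer Galois points $\{P_1,P_2,P_3\}$, use involutions of quasi-Galois points on the lines $\overline{P_iP_j}$ (elements of $G_2(C)$) to reduce to an automorphism fixing all three points, and then absorb the resulting diagonal matrix into $G_d(C)$ via $G[P_1]$ and $G[P_2]$. Your packaging through the homomorphism $\rho\colon{\rm Aut}(C)\to S_3$ with $\ker\rho=G_d(C)$, together with the explicit transpositions $\tau_{ij}$ and the check via Proposition \ref{Fermat2} that their fixed points carry groups of order exactly $2$, merely makes explicit what the paper's proof does implicitly.
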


\begin{proof}
Let $\sigma \in {\rm Aut}(C) \subset {\rm Aut}(\mathbb P^2)$. 
Then, $\sigma$ acts on the set $\{P_1, P_2, P_3\}$ of outer Galois points. 
If $\sigma(P_1)=P_2$, then there exists $\phi_1 \in G_2(C)$ such that $\phi_1\sigma(P_1)=P_1$, by using an action associated with a quasi-Galois point on the line $\overline{P_1P_2}$.  
If $\phi_1\sigma(P_2)=P_3$, then we take $\phi_2 \in G_2(C)$ such that $\phi_2(P_1)=P_1$ and $\phi_2(P_3)=P_2$, which comes from an action associated with a quasi-Galois point on the line $\overline{P_2P_3}$. 
Therefore, there exists $\phi \in G_2(C)$ such that $\phi\sigma(P_i)=P_i$ for $i=1, 2, 3$. 
Then, $\phi\sigma$ is represented by the matrix of the form
$$ \left(\begin{array}{ccc} \alpha & 0 & 0 \\ 0 & \beta & 0 \\ 0 & 0 & 1 \end{array} \right) $$
for some $\alpha, \beta \in K$. 
By considering the action on the defining equation $X^d+Y^d+Z^d=0$, we have $\alpha^d=1$ and $\beta^d=1$. 
If we take 
$$ \phi_3=\left(\begin{array}{ccc} \alpha^{-1} & 0 & 0 \\ 0 & 1 & 0 \\ 0 & 0 & 1 \end{array} \right) \in G[P_1], \ \phi_4=\left(\begin{array}{ccc} 1 & 0 & 0 \\ 0 & \beta^{-1} & 0 \\ 0 & 0 & 1 \end{array} \right) \in G[P_2],$$
then $\phi_4\phi_3\phi\sigma=1$ on $\mathbb P^2$. 
Therefore, $\sigma=\phi^{-1}\phi_3^{-1}\phi_4^{-1} \in \langle G_2(C), G_d(C) \rangle$. 
\end{proof} 

\begin{remark}
According to \cite[Example 2]{miura-ohbuchi}, $G_d(C) \ne {\rm Aut}(C)$. 
\end{remark}

\section{The number of quasi-Galois points for smooth quartic curves} 
In this section, we assume that $C$ is smooth and of degree $d=4$. 
The set of all quasi-Galois points in $\mathbb P^2 \setminus C$ for $C$ is denoted by $\Delta'$.  
If $P \in \Delta'$, then there exists a unique involution in $G[P]$, since $G[P]$ is a cyclic group of order $2$ or $4$. 
First, we note the following. 

\begin{lemma} \label{four cover}
If $P \in \Delta'$, then we have the following. 
\begin{itemize}
\item[(1)] There exist exactly four lines $\ell \ni P$ such that $C \cap \ell$ consists of one or two points, and the tangent line at each point of $C \cap \ell$ is equal to $\ell$. 
\item[(2)] There does not exist a line $\ell \ni P$ such that $I_Q(C, \ell)=3$ for some $Q \in C \cap \ell$.  
\end{itemize}
\end{lemma}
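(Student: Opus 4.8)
The plan is to work in a coordinate system adapted to the quasi-Galois structure. Since $P\in\Delta'$ lies in $\mathbb{P}^2\setminus C$ with $C$ smooth of degree $4$, we have $|G[P]|\in\{2,4\}$ and $G[P]=G_0[P]$ is cyclic. By Theorem \ref{standard form} I would first normalize so that $P=(1:0:0)$ and a generator $\sigma\in G[P]$ of order $n=|G[P]|$ is represented by the diagonal matrix $\mathrm{diag}(\zeta,1,1)$ with $\zeta$ a primitive $n$-th root of unity, and $C$ is given by a defining polynomial of the form $\sum_i G_{4-ni}(Y,Z)X^{ni}=0$. The projection $\pi_P$ has degree $4$, and the covering $C\to C/G[P]$ has degree $n$. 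The two cases $n=2$ and $n=4$ must both be handled; the involution in $G[P]$ is $\sigma^{n/2}$.

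**Proof of (1).** I would apply Proposition \ref{tangent2} directly. Since $P=(1:0:0)$ is quasi-Galois with $|G[P]|=n$, that proposition produces $d=4$ points $Q_1,\dots,Q_4\in C\cap(F[P]\setminus\{P\})$ with $P\in T_{Q_i}C$ and $\overline{PQ_i}=T_{Q_i}C$. By Corollary \ref{fixed locus}, $F[P]\setminus\{P\}$ is the line $\{X=0\}$, so all four points $Q_i$ lie on this single line, and the four tangent lines $T_{Q_i}C=\overline{PQ_i}$ are four lines through $P$. The claim is that these are exactly the lines $\ell\ni P$ for which every point of $C\cap\ell$ is a point of tangency. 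For the converse direction I would argue that if $\ell\ni P$ is tangent to $C$ at some smooth point $Q$, then $e_Q=I_Q(C,\ell)\ge 2$ by Fact \ref{index}(2); combined with Fact \ref{Galois covering}(2) (equal ramification along a $\pi_P$-fiber, via the factorization through $C/G[P]$) this forces the fiber structure over $\pi_P(Q)$ to be $G[P]$-invariant, and a counting/Hurwitz argument on the degree-$4$ projection pins down that the tangency lines are precisely these four. Since $n\in\{2,4\}$ divides each local index, the configuration of $C\cap\ell$ being "one or two points, each a tangency point" matches the orbit structure of $\sigma$ on the fiber.

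**Proof of (2).** Here I would argue by contradiction: suppose some line $\ell\ni P$ meets $C$ at a smooth point $Q$ with $I_Q(C,\ell)=3$. The fiber $\pi_P^{-1}(\pi_P(Q))$ then contains a point of ramification index $3$, so by Fact \ref{Galois covering}(2) applied to the Galois factor $C\to C/G[P]$, every point in the $G[P]$-orbit of $Q$ must have the same ramification index. The key numerical obstruction is that $3$ is coprime to $2$ and does not divide $4$, so $3$ cannot be a multiple of $n$ nor fit the index-partition of a degree-$4$ fiber that is compatible with an order-$n$ cyclic action: by Proposition \ref{tangent2} the ramification indices occurring along $F[P]$ are multiples of $n$, and for $Q\notin F[P]$ the involution $\sigma^{n/2}$ must permute the remaining fiber points, making $e_Q$ constant on $\sigma$-orbits, which is incompatible with a single ramification index equal to $3$ in a degree-$4$ cover. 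A direct check on the degree in $X$ of the normalized equation $\sum_i G_{4-ni}(Y,Z)X^{ni}$ confirms that the intersection $C\cap\ell$ with $\ell=\{Z=c(X-aZ)\}$ through $P$ cannot have a single point of multiplicity $3$.

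**Main obstacle.** I expect the delicate point to be the converse in part (1): showing that the four tangency lines produced by Proposition \ref{tangent2} are the \emph{only} lines through $P$ all of whose intersection points with $C$ are tangency points. This requires a careful ramification count for $\pi_P$, using the factorization through $C/G[P]$ and the Riemann--Hurwitz relation, to rule out additional "totally tangent" lines. Part (2) is then largely a corollary of the same ramification bookkeeping together with the arithmetic incompatibility of the index $3$ with $n\in\{2,4\}$.
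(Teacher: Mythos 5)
Your part (2) is fine: once streamlined it is exactly the paper's one-line observation. Writing $\pi_P=f_P\circ g_P$, where $g_P\colon C\to C/\sigma$ is the quotient by the unique involution $\sigma\in G[P]$ (unique since $G[P]$ is cyclic of order $2$ or $4$) and $f_P$ has degree $2$, every local index of $\pi_P$ is a product of two numbers in $\{1,2\}$ and so lies in $\{1,2,4\}$; an index $3$ is impossible. Your orbit-constancy argument for $Q\notin F[P]$ together with the divisibility at fixed points achieves the same conclusion, just less directly.

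Part (1), however, contains a genuine error, not merely a gap to be filled: you identify the four lines of the lemma with the tangent lines $T_{Q_i}C$ at the four points $Q_i\in C\cap(F[P]\setminus\{P\})$ produced by Proposition \ref{tangent2}. For a non-Galois quasi-Galois point with $|G[P]|=2$ this identification is false. In the standard form $X^4+G_2(Y,Z)X^2+G_4(Y,Z)=0$ of Theorem \ref{standard form}, the $Q_i$ are the zeros of $G_4$ on $\{X=0\}$, and the fiber of $\pi_P$ along $\ell=\overline{PQ_i}$ is cut out by $X^2\bigl(X^2+G_2(Q_i)\bigr)=0$; generically $G_2(Q_i)\neq 0$, so this fiber has type $(2,1,1)$, i.e.\ $\ell$ meets $C$ in three points, two of them transversally, and $T_{Q_i}C$ is \emph{not} one of the lemma's lines. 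The lemma's four lines are instead the zeros of the discriminant $G_2^2-4G_4$: bitangents through $P$ whose two tangency points lie off $F[P]$ and are swapped by $\sigma$. Hence no amount of ``counting/Hurwitz bookkeeping'' can rescue your identification, and your stated ``main obstacle'' (the converse) is built on a false premise. The paper's actual argument, which you never carry out, is: $g_P$ is ramified exactly at the four points $C\cap(F[P]\setminus\{P\})$ (Corollary \ref{fixed locus} and Fact \ref{Galois covering}(1)), so Riemann--Hurwitz gives genus $1$ for $C/\sigma$; therefore the degree-$2$ map $f_P\colon C/\sigma\to\mathbb P^1$ has exactly four branch points, and these correspond bijectively to the lines through $P$ with fiber type $(2,2)$ or $(4)$ --- bijectively because no two of the $Q_i$ are collinear with $P$, as they all lie on the single line $F[P]\setminus\{P\}$. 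The number four comes from the branch count of $f_P$, not from the four fixed points; relatedly, your claim that $n$ divides each local index on a totally tangent line is also false (a $(2,2)$ fiber with $n=4$, e.g.\ for the Fermat-type Galois case, would violate it, and in the $n=2$ non-Galois case the $(2,2)$ tangency points are not fixed points at all).
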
 

\begin{proof}
Let $\sigma \in G[P]$ be the involution. 
The projection $\pi_P$ is the composite map of $g_P: C \rightarrow C/\sigma$ and $f_P:C/\sigma \rightarrow \mathbb P^1$. 
Since $g_P$ is ramified at exactly four points by Corollary \ref{fixed locus} and Fact \ref{Galois covering}(1), by Hurwitz formula, the genus of the smooth model of $C/\sigma$ is equal to $1$. 
Then, $f_P: C/\sigma \rightarrow \mathbb P^1$ has exactly four ramification points. 
Therefore, we have (1). 
Assertion (2) is obvious, since $\pi_P$ is the composite map of double coverings $g_P$ and $f_P$. 
\end{proof}

We recall the notion of $G$-pairs and the following proposition. 

\begin{proposition} \label{pair 2}
Let $(P_1, P_2)$ be a $G$-pair. 
Then, there exists a linear transformation $\phi$ such that $\phi(P_1)=(1:0:0)$, $\phi(P_2)=(0:1:0)$, and $\phi(C)$ is given by 
$$ X^4+Y^4+Z^4+aX^2Y^2+bY^2Z^2+cZ^2X^2=0, $$
where $a, b, c \in K$. 
In particular, $C \cap \overline{P_1P_2}$ consists of exactly four points. 
\end{proposition}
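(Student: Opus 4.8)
The plan is to reduce $(P_1,P_2)$ to the standard coordinate configuration using the cyclic generators, then read off the defining equation by imposing invariance under both generators simultaneously. Since $(P_1,P_2)$ is a $G$-pair with $|G[P_1]|=|G[P_2]|=n$, and we are in the quartic case $d=4$, the involution is the only nontrivial element available when $n=2$; but the $G$-pair notion only requires $\sigma_1(P_2)=P_2$ and $\sigma_2(P_1)=P_1$. I would first invoke Theorem \ref{standard form} to move $P_1$ to $(1:0:0)$ and to diagonalize a generator $\sigma_1\in G[P_1]$ as $A_{\sigma_1}=\mathrm{diag}(\zeta,1,1)$ with $\zeta$ a primitive square root of unity, i.e. $\zeta=-1$ since $n=2$ here (the degree-$4$ quartic admits only $|G[P]|\in\{2,4\}$, but a $G$-pair uses a generator fixing the partner, so $\sigma_1$ is an involution). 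By Corollary \ref{fixed locus}, $F[P_1]=\{P_1\}\cup\{X=0\}$, so the condition $\sigma_1(P_2)=P_2$ forces $P_2\in\{X=0\}$, say $P_2=(0:1:a)$. Applying the shear $(X:Y:Z)\mapsto(X:Y:Z-aY)$, which fixes $(1:0:0)$ and the line $X=0$, I can normalize $P_2=(0:1:0)$ without disturbing the already-achieved form of $\sigma_1$.

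Next I would write down $\sigma_2\in G[P_2]$. By the same argument applied at $P_2$ (running Theorem \ref{standard form} in the coordinates centered at $P_2$), $\sigma_2$ is represented by a matrix that is the identity on the $P_2$-fixed data and scales the $Y$-direction; explicitly $A_{\sigma_2}=\mathrm{diag}(1,-1,1)$ after a further normalization, using that the $G$-pair condition $\sigma_2(P_1)=P_1$ forces $P_1=(1:0:0)\in F[P_2]$, which constrains the off-diagonal entries exactly as in the proof of Theorem \ref{d/2}. The step of simultaneously diagonalizing both $\sigma_1$ and $\sigma_2$ is the technical heart: I would choose a single conjugating matrix $B$ (upper-triangular, fixing the coordinate points $P_1,P_2$) so that $B^{-1}A_{\sigma_1}B=\mathrm{diag}(-1,1,1)$ and $B^{-1}A_{\sigma_2}B=\mathrm{diag}(1,-1,1)$ at the same time, mirroring the two-matrix normalization carried out in Theorem \ref{d/2}.

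With both generators diagonal, I would finish by imposing invariance of the defining polynomial. Writing $F=\sum c_{ijk}X^iY^jZ^k$ with $i+j+k=4$, invariance under $\sigma_1:X\mapsto -X$ kills every monomial with $i$ odd, and invariance under $\sigma_2:Y\mapsto -Y$ kills every monomial with $j$ odd. The surviving monomials have $i$ and $j$ both even, hence $k$ even as well, leaving exactly $X^4,Y^4,Z^4,X^2Y^2,Y^2Z^2,Z^2X^2$. After rescaling the coordinates so that the three pure fourth-power coefficients equal $1$ (possible since $C$ is smooth, so these coefficients are nonzero — smoothness forbids $C$ passing through or being singular at the coordinate vertices in a way that would annihilate them), I obtain the asserted equation $X^4+Y^4+Z^4+aX^2Y^2+bY^2Z^2+cZ^2X^2=0$. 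Finally, $C\cap\overline{P_1P_2}$ is $C\cap\{Z=0\}$, cut out by $X^4+Y^4+aX^2Y^2=0$, a degree-four binary form that is separable by smoothness of $C$, giving exactly four points. I expect the main obstacle to be the simultaneous normalization of $\sigma_1$ and $\sigma_2$ by a common linear change of coordinates that preserves both fixed points; the rest is the routine monomial bookkeeping enforced by the two sign involutions.
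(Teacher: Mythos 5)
Your proposal is correct and takes essentially the same route as the paper's own proof: normalize $P_1=(1:0:0)$ with $\sigma_1$ diagonal via Theorem \ref{standard form}, use Corollary \ref{fixed locus} and the $G$-pair condition $P_1\in F[P_2]$ to kill the off-diagonal entries of $A_{\sigma_2}$, simultaneously diagonalize both involutions by an explicit upper-triangular matrix $B$ exactly as in the proof of Theorem \ref{d/2}, and read off the surviving even monomials. The only (minor) added value is that you make explicit the ``exactly four points'' claim, which the paper leaves implicit: the binary form $X^4+aX^2Y^2+Y^4$ is separable because $a=\pm 2$ would force a singular point of $C$ on the line $Z=0$.
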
 

\begin{proof} 
By the assumption, $P_2 \in F[P_1]$ and $P_1 \in F[P_2]$. 
By Theorem \ref{standard form}, for a suitable system of coordinates, we can assume that $P_1=(1:0:0)$ and the involution $\sigma_1 \in G[P_1]$ is represented by the matrix 
$$A_{\sigma_1}=\left(\begin{array}{ccc} -1 & 0 & 0 \\ 0 & 1 & 0 \\ 0 & 0 & 1 \end{array}\right). 
$$ 
Then, the line given by $F[P_1]\setminus \{P_1\}$ is defined by $X=0$. 
Since $P_2 \in F[P_1] \setminus \{P_1\}$, $P_2=(0:1:a)$ for some $a \in K$. 
If we take the linear transformation $(X:Y:Z) \mapsto (X:Y:Z-aY)$, we can assume that $P_2=(0:1:0)$. 
Then, the involution $\sigma_2 \in G[P_2]$ is represented by the matrix 
$$A_{\sigma_2}=\left(\begin{array}{ccc} 1 & 0 & 0 \\ a & -1 & b \\ 0 & 0 & 1 \end{array}\right), $$ 
for some $a,b \in K$.  
Since the line given by $F[P_2]\setminus \{P_2\}$ is defined by $aX+(-1-1)Y+bZ=0$ and $P_1 \in F[P_2] \setminus \{P_2\}$, we have $a=0$. 
If we take 
$$ B=\left(\begin{array}{ccc} 2 & 0 & 0 \\ 0 & 1 & b \\ 0 & 0 & 2 \end{array}\right), $$
then 
$$ B^{-1}A_{\sigma_1}B=\left(\begin{array}{ccc} -1 & 0 & 0 \\ 0 & 1 & 0 \\ 0 & 0 & 1 \end{array}\right), \ B^{-1}A_{\sigma_2}B=\left(\begin{array}{ccc} 1 & 0 & 0 \\ 0 & -1 & 0 \\ 0 & 0 & 1 \end{array}\right).  $$
By taking the linear transformation represented by $B^{-1}$, we have the defining polynomial
$$ F=X^{4}+(aY^2+bZ^2)X^2+(\alpha Y^{4}+\beta Y^2Z^2+\gamma Z^{4}) $$
of $C$. 
We can assume $\alpha=\gamma=1$. 
We have the conclusion. 
\end{proof} 

Let $\ell \subset \mathbb P^2$ be a projective line. 
We would like to calculate the number of quasi-Galois points on the line $\ell$.  
We treat the cases $\# C \cap \ell=4, 3, 2$ and $1$ separately. 

\begin{proposition} \label{four points} 
Let $\ell$ be a line with $\# C \cap \ell=4$. 
Then, $\#\Delta' \cap \ell=0, 1, 2, 4$ or $6$.
Furthermore, if $\#\Delta' \cap \ell=2$ (resp. $4$, $6$), then we have exactly one (resp. two, three) $G$-pair. 
\end{proposition}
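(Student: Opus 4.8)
The plan is to analyze the structure of quasi-Galois points lying on a line $\ell$ that meets $C$ in exactly four points, using the involution structure developed above. Since $C$ is a smooth quartic, each $P \in \Delta'$ carries a unique involution $\sigma_P \in G[P]$ whose fixed locus $F[P]$ is $\{P\} \cup \{\text{a line}\}$ by Corollary \ref{fixed locus}. The key observation is that if $P \in \Delta' \cap \ell$, then $\sigma_P$ must act on the finite set $C \cap \ell$ of four points, because $\sigma_P$ preserves the fibers of $\pi_P$ and $\ell$ is a line through $P$. First I would restrict $\sigma_P$ to $\ell \cong \mathbb{P}^1$ and observe that, by Theorem \ref{standard form}, the order of $\sigma_P|_\ell$ equals the order of $\sigma_P$, namely two; thus $\sigma_P|_\ell$ is a nontrivial involution of $\mathbb{P}^1$ preserving the four-point set $C \cap \ell$.

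Next I would translate the problem into the combinatorics of involutions on a four-element subset of $\mathbb{P}^1$. An involution of $\mathbb{P}^1$ has exactly two fixed points, and it must permute $C \cap \ell$ as a product of transpositions (possibly with fixed points among the four). The essential point is the correspondence between quasi-Galois points $P \in \ell$ and involutions of $\mathbb{P}^1$ preserving $C \cap \ell$: each such $P$ determines an involution $\sigma_P|_\ell$, and conversely the fixed points and the pairing of $C \cap \ell$ determine the candidate points. I would enumerate the involutions of $\mathbb{P}^1$ stabilizing a given set of four points. Writing $C \cap \ell = \{A_1, A_2, A_3, A_4\}$, an involution either fixes two of them and swaps the other two, or swaps them in two disjoint transpositions. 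The number of pairings of four points into two transpositions is three, and these correspond precisely to the three ways of partitioning $\{A_1,A_2,A_3,A_4\}$ into two pairs; each such pairing yields at most one involution and hence at most one quasi-Galois point $P$ whose associated pairing it is. This is the source of the numerics $0,1,2,4,6$ and the counting of $G$-pairs.

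Then I would connect the pairings to $G$-pairs via Proposition \ref{pair 2}: if $(P_1, P_2)$ is a $G$-pair with $P_1, P_2 \in \ell$, the standard form shows $C \cap \overline{P_1 P_2} = C \cap \ell$ consists of four points arranged symmetrically, and the two commuting involutions $\sigma_{P_1}, \sigma_{P_2}$ realize a Klein four-group acting on these four points. Each $G$-pair on $\ell$ therefore accounts for a pair of quasi-Galois points together with the induced ``product'' involution, explaining why $G$-pairs come in the tally: $\#\Delta' \cap \ell = 2$ forces exactly one $G$-pair, while $4$ and $6$ force two and three respectively. The heart of the argument is showing that any two quasi-Galois points on $\ell$ automatically form a $G$-pair (using Lemma \ref{pair 1}, since each $\sigma_{P_i}$ stabilizes $C \cap \ell$ and hence the other point's fixed data) and then that the Klein four-groups so generated are controlled by the three partitions. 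I expect the main obstacle to be the precise bookkeeping ruling out the values $3$ and $5$: I must show that the quasi-Galois points on $\ell$ close up under the group generated by their involutions, so that their number is the order of a subgroup of the symmetric group $S_4$ acting faithfully on $C \cap \ell$ and consisting of involutions plus the identity — forcing the count into $\{1,2,4\}$ elementary-abelian patterns and then into the admissible totals $0,1,2,4,6$ after accounting for which of the three partition-involutions are actually geometrically realized by points of $\mathbb{P}^2 \setminus C$.
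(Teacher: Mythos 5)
Your overall strategy---classify the restrictions $\sigma_P|_\ell$ by their action on the four points of $C \cap \ell$ and count via the three pairings---is the same as the paper's, but three steps are wrong or missing. First, you explicitly allow $\sigma_P|_\ell$ to fix two of the four points and swap the other two (``possibly with fixed points among the four'') and never return to rule this out; if such involutions occurred, there would be up to six additional types and the enumeration into three pairings would collapse. The exclusion is genuinely geometric: a point of $C$ fixed by $\sigma_P$ lies on the line $F[P]\setminus\{P\}$ by Corollary \ref{fixed locus}, and by Proposition \ref{tangent2} every point of $C \cap (F[P]\setminus\{P\})$ is a tangency point, so $I_Q(C,\overline{PQ}) \ge 2$ there; since $\ell$ meets $C$ in four distinct points, every intersection multiplicity along $\ell$ equals one, whence $\sigma_P$ acts on $C\cap\ell$ without fixed points, i.e.\ as one of exactly three products of two disjoint transpositions. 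You need this argument and do not have it.

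Second, your counting is internally inconsistent: you assert ``each such pairing yields at most one involution and hence at most one quasi-Galois point,'' which would give $\#\Delta'\cap\ell \le 3$ and contradict the values $4$ and $6$ in the very statement you are proving. The correct bound is two points per pairing: the pairing determines a unique M\"obius involution of $\ell \cong \mathbb{P}^1$ (it is pinned down by where it sends three of the $Q_i$), every $P \in \Delta'\cap\ell$ of that type is a fixed point of it, and an involution of $\mathbb{P}^1$ has exactly two fixed points---these two fixed points are precisely the members of the associated $G$-pair. Third, your claim that ``any two quasi-Galois points on $\ell$ automatically form a $G$-pair'' is false: stabilizing the set $C\cap\ell$ does not supply the hypothesis $\sigma_1(P_2)=P_2$ of Lemma \ref{pair 1}. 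Concretely, for the curves of Proposition \ref{four-six points 2} the points $(1:0:0)$ and $(1:1:0)$ both lie in $\Delta'\cap\{Z=0\}$, yet the involution $(X:Y:Z)\mapsto(-X:Y:Z)$ of the first moves the second. What is true, and what actually excludes the counts $3$ and $5$, is the following dichotomy: two points of the same type form a $G$-pair (their involutions agree on $\ell$, being M\"obius maps agreeing at three points, so each fixes the other), while if $P_1, P_2$ have different types then $\sigma_1(P_2)\ne P_2$ is a \emph{new} quasi-Galois point whose involution $\sigma_1\sigma_2\sigma_1$ has the same type as $\sigma_2$, so every realized type is forced to fill up with both of its two points. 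This conjugation step, which you never carry out, is the heart of the proof; the closing appeal to ``subgroups of $S_4$ consisting of involutions'' does not substitute for it, since you have established no mechanism forcing the realized involutions to close up into a group.
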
 

\begin{proof}
Let $C \cap \ell=\{Q_1, Q_2, Q_3, Q_4\}$. 
We consider the possibilities of involutions acting on $C \cap \ell$. 
There are at most three types: 
\begin{itemize}
\item[(1)] $Q_1 \leftrightarrow Q_2$,\ $Q_3 \leftrightarrow Q_4$, 
\item[(2)] $Q_1 \leftrightarrow Q_3$,\ $Q_2 \leftrightarrow Q_4$, 
\item[(3)] $Q_1 \leftrightarrow Q_4$,\ $Q_2 \leftrightarrow Q_3$. 
\end{itemize}
If $P_1, P_2 \in \Delta' \cap \ell$, and involutions $\sigma_1 \in G[P_1]$ and $\sigma_2 \in G[P_2]$ are of type (1), then we have $\sigma_1|_\ell=\sigma_2|_\ell$. 
Then, $\sigma_1(P_2)=\sigma_2(P_2)=P_2$ and $\sigma_2(P_1)=\sigma_1(P_1)=P_1$, i.e. $(P_1, P_2)$ is a $G$-pair. 
For each types (1)-(3) we have at most two quasi-Galois points, and hence, $\#\Delta' \cap \ell \le 6$.

Let $\sigma_1 \in G[P_1]$ and $\sigma_2 \in G[P_2]$ give involutions of types (1) and (2) respectively. 
Then, $\sigma_1\sigma_2\sigma_1(Q_1)=Q_3$, and hence, $\sigma_1\sigma_2\sigma_1$ is of type (2). 
Since $\sigma_1(P_2) \ne P_2$ and $\sigma_1(P_2)$ is quasi-Galois, $(P_2, \sigma_1(P_2))$ is a $G$-pair. 
Similarly, $(P_1, \sigma_2(P_1))$ is a $G$-pair. 
We have two $G$-pairs. 

Assume that $\#\Delta'\cap\ell \ge 5$. 
There are at least two $G$-pairs. 
We can assume that $(P_1, P_2)$ and $(P_3, P_4)$ are $G$-pairs, and give involutions on $\ell$ of type (1) and (2) respectively. 
Let $P_5$ be another quasi-Galois point, and let $\sigma_i \in G[P_i]$ be the involution. 
Then, $\sigma_1(P_5) \ne P_5$ and the involution $\sigma_1\sigma_5\sigma_1 \in G[\sigma_1(P_5)]$ gives an involution on $\ell$ of type (3). 
Therefore, $\sigma_1(P_5) \ne P_1, \ldots, P_5$. 
We have $\#\Delta'\cap\ell=6$.
\end{proof}

If $(P_1, P_2)$ is a $G$-pair, then, up to linear transformations, $P_1=(1:0:0)$, $P_2=(0:1:0)$,  and $C$ is defined by 
$$ F=X^4+Y^4+Z^4+aX^2Y^2+bY^2Z^2+cZ^2X^2=0. $$
We consider this curve. 
Then, the lines $F[P_1]\setminus \{P_1\}$ and $F[P_2]\setminus \{P_2\}$ are defined by $X=0$ and $Y=0$ respectively. 
Since $C$ is smooth, we have $a \ne \pm 2$, $b \ne \pm 2$ and $c \ne \pm 2$.

\begin{proposition} \label{four-six points 1}
We have the following. 
\begin{itemize}
\item[(1)] If there exist two $G$-pairs on the line defined by $Z=0$, then $b=\pm c$. 
Furthermore, when $c=-b$, we take the linear transformation given by $X \mapsto iX$, where $i^2=-1$, so that we have the defining equation with $c=b$. 
\item[(2)] If there exist three $G$-pairs on the line defined by $Z=0$, then $b=c=0$. 
\end{itemize} 
\end{proposition}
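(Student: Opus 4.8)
The plan is to translate ``two (resp. three) $G$-pairs on $\ell:=\{Z=0\}$'' into the existence of explicit involutions of $C$ and then read off the constraints on $b,c$ from the condition $\sigma^*F=F$. Since $(P_1,P_2)=((1:0:0),(0:1:0))$ is already a $G$-pair (realizing the pairing $X\mapsto -X$ of $C\cap\ell$), by Proposition \ref{four points} a second $G$-pair $(P_3,P_4)$ must realize one of the two remaining pairings of the four points of $C\cap\ell$, which are the roots of $X^4+aX^2Y^2+Y^4=0$ on $\ell$. First I would record the three involutions of $\ell\cong\mathbb P^1$ permuting these four roots in terms of $t=X/Y$: the map $t\mapsto -t$, fixing $(1:0:0),(0:1:0)$; the map $t\mapsto 1/t$, fixing $(1:\pm 1:0)$; and the map $t\mapsto -1/t$, fixing $(1:\pm i:0)$. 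The smoothness hypothesis $a\ne\pm 2$ guarantees that none of these six fixed points lies on $C$, so each is a legitimate candidate in $\mathbb P^2\setminus C$.

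Next, for a second pair of type $t\mapsto 1/t$, the shared involution $\sigma\in G[P_3]$ fixes $P_3=(1:1:0)$ and, by the $G$-pair condition together with the argument in the proof of Proposition \ref{four points}, also fixes $P_4=(1:-1:0)$. By Corollary \ref{fixed locus}, $\sigma$ is a planar involution whose isolated fixed point is $P_3$ and whose fixed line passes through $P_4$; such $\sigma$ form a one-parameter family, the parameter measuring the tilt of the fixed line away from $\{X+Y=0\}$. I would write this family explicitly, say $\sigma:(X:Y:Z)\mapsto(-Y-\delta Z:-X-\delta Z:Z)$, and impose $\sigma^*F=F$ (the scalar is forced to be $1$ since both polynomials are monic in $X^4$). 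The key computation is that the coefficient of an odd monomial such as $Y^3Z$ in $\sigma^*F$ equals $4\delta$, while it is $0$ in $F$; hence $\delta=0$, so $\sigma$ is the involution with fixed line $\{X+Y=0\}$, and substituting back into $\sigma^*F=F$ yields $b=c$. The type $t\mapsto -1/t$ is handled identically with $\sigma:(X:Y:Z)\mapsto(-iY-\gamma Z:iX+i\gamma Z:Z)$: the same odd-monomial comparison forces $\gamma=0$, and then $\sigma^*F=F$ forces $b=-c$. Thus a second $G$-pair gives $b=\pm c$, which is the first assertion of (1); the ``furthermore'' then follows because the substitution $X\mapsto iX$ sends $(a,b,c)$ to $(-a,b,-c)$, turning $c=-b$ into $c=b$. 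For assertion (2), three $G$-pairs exhaust all three pairings, so both involutions above preserve $C$; hence $b=c$ and $b=-c$ hold simultaneously, forcing $b=c=0$.

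The step I expect to be the main obstacle is showing that the tilt parameter of the second involution is forced to vanish, i.e.\ that $\sigma$ is exactly the coordinate-swap-type involution: a priori the planar involution fixing the two required points of $\ell$ is not unique, and it is precisely the invariance of the very symmetric polynomial $F$ (its lack of terms odd in any variable) that pins it down. Once this is settled the remaining coefficient matching is routine. A secondary point requiring care is to confirm, via Corollary \ref{fixed locus} and the proof of Proposition \ref{four points}, that the shared involution of the second $G$-pair genuinely fixes both $P_3$ and $P_4$, since this is what legitimizes the reflection parametrization on which the whole computation rests.
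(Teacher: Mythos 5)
Your proof is correct, but it takes a genuinely different route from the paper's. The paper never determines the new involutions themselves: it forms the composite $\sigma_1\sigma_3$ (resp.\ $\sigma_3\sigma_5$) of involutions belonging to two distinct $G$-pairs, notes that this composite swaps $P_1\leftrightarrow P_2$ (resp.\ fixes $P_1,P_2$) and fixes the point $(0:0:1)=\{X=0\}\cap\{Y=0\}$, hence is represented by an anti-diagonal matrix with entries $\lambda,\mu$ (resp.\ a diagonal matrix of the form $\mathrm{diag}(-\eta,\eta,1)$ of order at least $3$), and then compares coefficients of the pulled-back quartic to get $b=\pm c$ (resp.\ $b=c=0$). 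You instead pin down the second involution directly: its restriction to $Z=0$ must be one of the remaining Klein-group involutions $t\mapsto 1/t$ or $t\mapsto -1/t$ --- and your smoothness remark $a\ne\pm 2$ does the needed work here, since it rules out the extra involutions that exist for special cross-ratios, all of which fix points of $C\cap\ell$, whereas the fixed points of $\sigma_3|_\ell$ are $P_3,P_4\notin C$ by Corollary \ref{fixed locus} and the $G$-pair condition --- and then the fixed-point data leaves exactly your one-parameter families. I verified the computations: $\sigma_\delta$ is an involution with isolated fixed point $(1:1:0)$ and fixed line $X+Y+\delta Z=0$ through $(1:-1:0)$; the scalar in $\sigma^*F=\lambda F$ is $1$ because both $X^4$-coefficients equal $1$; the $Y^3Z$-coefficient of $\sigma^*F$ is $4\delta$ (and $-4i\gamma$ in the second family), forcing the tilt to vanish; and the resulting constraints are $b=c$ and $b=-c$ respectively. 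Your treatment of (2) is actually cleaner than the paper's: since three $G$-pairs exhaust the three types, you simply intersect $b=c$ with $b=-c$, avoiding the paper's second product computation and its order-at-least-$3$ step (where, incidentally, the paper's displayed condition ``$\eta^2\ne -1$'' appears to be a typo for $\eta^2\ne 1$). What the paper's product trick buys is economy --- it never needs to locate the new quasi-Galois points or their involutions; what your route buys is extra information, namely the exact positions $(1:\pm 1:0)$ and $(1:\pm i:0)$ of the remaining quasi-Galois points and explicit matrices for their involutions, facts the paper only recovers later in Proposition \ref{four-six points 2} by separate coordinate changes. Your ``furthermore'' clause is also right: $X\mapsto iX$ sends $(a,b,c)$ to $(-a,b,-c)$, turning $c=-b$ into $c=b$ while fixing $P_1$, $P_2$ and the line $Z=0$.
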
 

\begin{proof} 
Assume that $(P_1, P_2)$ and $(P_3, P_4)$ are two $G$-pairs on the line $Z=0$. 
Let $Q \in F[P_1] \cap F[P_2]$. 
Then $Q=(0:1:0)$. 
Let $\sigma_1\in G[P_1]$ and $\sigma_3 \in G[P_3]$ be involutions. 
Then, $\sigma_1\sigma_3$ satisfies 
$$ P_1 \leftrightarrow P_2,\ P_3 \leftrightarrow P_4 $$
(see the second paragraph of the proof of Proposition \ref{four points}). 
Since $\sigma_1\sigma_3(F[P_1])=F[P_2]$, we have $\sigma_1\sigma_3(Q)=Q$. 
Then, $\sigma_1\sigma_3$ is represented by the matrix 
$$ \left(\begin{array}{ccc} 0 & \lambda & 0 \\ 
\mu & 0 & 0 \\
0 & 0 & 1 
\end{array}\right) $$
for some $\lambda, \mu \in K$. 
Then, $((\sigma_1\sigma_3)^{-1})^*F$ and $F$ are the same up to a constant. 
Therefore, we have  
$$ \lambda^4Y^4+\mu^4X^4+Z^4+a\lambda^2\mu^2X^2Y^2+b\mu^2X^2Z^2+c\lambda^2Y^2Z^2=F. $$
Considering the coefficients of $Y^4$ and $Y^2Z^2$, we have $\lambda^2=\pm 1$ and $b=\pm c$. 

Assume that $(P_1, P_2)$, $(P_3, P_4)$ and $(P_5, P_6)$ are three $G$-pairs on the line $Z=0$. 
Let $\sigma_3\in G[P_3]$ and $\sigma_5 \in G[P_5]$ be involutions. 
Then, $\sigma_3\sigma_5$ satisfies 
$$ P_1 \rightarrow P_1,\ P_2 \rightarrow P_2,\ P_3 \leftrightarrow P_4,\ P_5 \leftrightarrow P_6 $$
(see the second paragraph of the proof of Proposition \ref{four points}). 
Since $\sigma_3\sigma_5(P_i)=P_i$ for $i=1, 2$, we have $\sigma_3\sigma_5(Q)=Q$. 
Note that the order of $\sigma_3\sigma_5$ is at least $3$. 
Then, $\sigma_3\sigma_5$ is represented by the matrix 
$$ \left(\begin{array}{ccc} -\eta & 0 & 0 \\ 
0 & \eta & 0 \\
0 & 0 & 1 
\end{array}\right),$$
where $\eta^2 \ne -1$. 
Then, $((\sigma_3\sigma_5)^{-1})^*F$ and $F$ are the same up to a constant. 
Therefore, we have  
$$ \eta^4X^4+\eta^4Y^4+Z^4+a\eta^2X^2Y^2+b\eta^2Y^2Z^2+c\eta^2Z^2X^2=F. $$
Considering the coefficients of $Y^2Z^2$ and $Z^2X^2$, we have $b=c=0$. 
\end{proof}

On the contrary, we have the following. 

\begin{proposition} \label{four-six points 2} 
Let $a, b \in K$, and let $C$ be the smooth plane curve given by 
$$ X^4+Y^4+Z^4+aX^2Y^2+bY^2Z^2+bZ^2X^2=0. $$
Then, we have the following. 
\begin{itemize}
\item[(1)] Points $(1:0:0)$, $(0:1:0)$, $(1:1:0)$ and $(1:-1:0)$ are quasi-Galois points. Furthermore, if $b \ne 0$, they are not Galois. 
\item[(2)] If $b=0$, then points $(\pm i:1:0)$ are quasi-Galois, where $i^2=-1$. 
Furthermore, we have the following. 
\begin{itemize} 
\item{} If $a \ne 0$, then points $(1:0:0)$ and $(0:1:0)$ are not Galois. 
\item{} If $a \ne 6$, then points $(\pm 1:1:0)$ are not Galois.
\item{} If $a \ne -6$, then points $(\pm i:1:0)$ are not Galois. 
\item{} If $a =0$ or $\pm 6$, then there exists a linear transformation $\phi$ such that $\phi(\{Z=0\})=\{Z=0\}$ and $\phi(C)$ is the Fermat curve $X^4+Y^4+Z^4=0$. 
\end{itemize} 
\item[(3)] If $b=0$, then $\delta'[2]=6$ or $12$. 
Furthermore, $\delta'[2]=12$ if and only if $C$ is projectively equivalent to the Fermat curve $X^4+Y^4+Z^4=0$. 
\end{itemize} 
\end{proposition}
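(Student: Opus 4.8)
The plan is to prove the two numerical values separately, exploiting the extra order-four automorphism that appears precisely when $b=0$. Since $b=0$, the curve $C:X^4+Y^4+Z^4+aX^2Y^2=0$ is invariant under $\rho:(X:Y:Z)\mapsto(X:Y:iZ)$, and writing the equation as $Z^4+(X^4+Y^4+aX^2Y^2)=0$ shows, by Theorem \ref{standard form} with $n=4$, that $(0:0:1)$ is an outer Galois point with $G[(0:0:1)]=\langle\rho\rangle\cong\mathbb{Z}/4\mathbb{Z}$. I record at the outset that the six points $(1:0:0),(0:1:0),(1:\pm1:0),(\pm i:1:0)$ of assertions (1) and (2) all lie on $\{Z=0\}$ and pair into three $G$-pairs there, namely $((1:0:0),(0:1:0))$, $((1:1:0),(1:-1:0))$ and $((i:1:0),(-i:1:0))$; this is exactly the situation of Proposition \ref{four-six points 1}(2), consistent with $b=c=0$.

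For the lower bound, if $a\neq0,\pm6$ then assertions (1) and (2) give $|G[P]|=2$ for each of these six points, so $\delta'[2]\ge6$; moreover Proposition \ref{four points} caps $\#(\Delta'\cap\{Z=0\})$ at $6$, so these six exhaust the quasi-Galois points on $\{Z=0\}$. If instead $a=0,\pm6$, the last clause of assertion (2) produces a linear $\phi$ carrying $C$ to the Fermat quartic, and then Theorem \ref{Fermat, number}(2) with $d=4$ gives $\delta'[2]=3d=12$. Thus the only remaining task is to show that for $a\neq0,\pm6$ there is no seventh order-two point.

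Suppose $P=(\alpha:\beta:\gamma)$ is an order-two quasi-Galois point not among the six above. Since $\Delta'\cap\{Z=0\}$ already consists of exactly those six points, $P\notin\{Z=0\}$, i.e.\ $\gamma\neq0$. As $\rho\in\mathrm{Aut}(C)$ fixes $\{Z=0\}$ pointwise and conjugates $G[P]$ onto $G[\rho(P)]$, the four points $\rho^k(P)$ are distinct order-two quasi-Galois points, all lying on the line $m:=\overline{(0:0:1)\,(\alpha:\beta:0)}$, which is $\rho$-invariant and passes through the Galois point $(0:0:1)$. Hence $\#(\Delta'\cap m)\ge5$, and Proposition \ref{four points} forces $\#(\Delta'\cap m)=6$. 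The idea is then to normalise a $G$-pair on $m$ to $((1:0:0),(0:1:0))$ via Proposition \ref{pair 2}, and to read off from the resulting two or three $G$-pairs on the single line $m$, through Proposition \ref{four-six points 1}(1) or (2), the vanishing of an additional mixed coefficient. Because the relation $b=c=0$ is already in force, this extra vanishing is only compatible with $a\in\{0,\pm6\}$, i.e.\ with $C$ being projectively Fermat (final bullet of assertion (2)), contradicting $a\neq0,\pm6$. Therefore no seventh point exists and $\delta'[2]=6$.

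Combining the two computations yields $\delta'[2]\in\{6,12\}$, with the value $12$ attained exactly when $C$ is projectively equivalent to the Fermat quartic. I expect the genuine difficulty to lie entirely in the third paragraph: one must confirm that the line $m$ really meets $C$ in four points so that Proposition \ref{four points} applies, track how the coordinate normalisation of Propositions \ref{pair 2} and \ref{four-six points 1} interacts with the $\rho$-orbit on $m$, and verify that the forced coefficient relation indeed reduces to $a\in\{0,\pm6\}$ rather than to some vacuous identity. The lower-bound step and the Fermat count, by contrast, are immediate consequences of assertions (1)--(2) together with Theorem \ref{Fermat, number}.
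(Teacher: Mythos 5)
There are two genuine gaps. First, your proposal never proves assertions (1) and (2), which constitute the bulk of the paper's proof: the paper's argument consists of the explicit coordinate changes $\tilde{X}=\frac{1}{2}(X+Y)$, $\tilde{Y}=\frac{1}{2}(X-Y)$ (resp.\ $\tilde{X}=\frac{1}{2}(X+iY)$, $\tilde{Y}=\frac{1}{2}(X-iY)$), which carry $C$ to $(2+a)X^4+(2+a)Y^4+Z^4+(12-2a)X^2Y^2+2bY^2Z^2+2bX^2Z^2=0$ (resp.\ $(2-a)X^4+(2-a)Y^4+Z^4+(12+2a)X^2Y^2=0$); quasi-Galoisness then follows from Theorem \ref{standard form}, the non-Galois criteria follow by testing whether ${\rm diag}(i,1,1)$ preserves these equations (forcing $12-2a=0$, $2b=0$, resp.\ $12+2a=0$), and the final bullet of (2) follows by rescaling when $a=0,\pm 6$. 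Your proof of (3) explicitly invokes these assertions (``assertions (1) and (2) give $|G[P]|=2$,'' ``the last clause of assertion (2) produces a linear $\phi$''), so as a proof of the proposition as stated it is circular.

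Second, even granting (1)--(2), the crux of your upper-bound argument for (3) is unfilled, as you yourself flag. The $\rho$-orbit step is sound: the four distinct points $\rho^k(P)$ together with $Q=(0:0:1)$ give $\#(\Delta'\cap m)\ge 5$, and Propositions \ref{one point}, \ref{two points}, \ref{three points} and \ref{four points} then force $\#(C\cap m)=4$ and $\#(\Delta'\cap m)=6$ with three $G$-pairs. But the concluding step --- ``the forced coefficient relation reduces to $a\in\{0,\pm 6\}$'' --- cannot be read off as you suggest: Proposition \ref{pair 2} applied to a $G$-pair on $m$ produces a \emph{new} normal form $X'^4+Y'^4+Z'^4+a'X'^2Y'^2+b'Y'^2Z'^2+c'Z'^2X'^2$ in coordinates adapted to $m$, and the vanishing $b'=c'=0$ from Proposition \ref{four-six points 1}(2) lives in those coordinates; it does not combine directly with the relation $b=c=0$, which refers to the original coordinates adapted to $\{Z=0\}$. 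What the new normal form actually yields is a second outer Galois point $Q'$ with $F[Q']\setminus\{Q'\}=m\ne\{Z=0\}=F[Q]\setminus\{Q\}$, hence $Q'\ne Q$, and one must then invoke Yoshihara's theorem to conclude that $C$ is the Fermat quartic --- which is exactly the mechanism of the paper's much shorter proof: for any quasi-Galois $R\notin\{Z=0\}\cup\{Q\}$ with involution $\tau$, either $\tau(Q)=Q$, whence by Lemma \ref{pair 1} the pair $(R,Q)$ forces $R\in F[Q]\setminus\{Q\}=\{Z=0\}$, a contradiction, or $\tau(Q)\ne Q$ is a second outer Galois point, so $C$ is projectively Fermat and $\delta'[2]=12$ by Theorem \ref{Fermat, number}. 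The paper thus needs no case split on $a$ and no orbit or line analysis at all; until you either carry out the two-coordinate comparison or route through the second-Galois-point argument, your proof of (3) is incomplete.
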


\begin{proof}
We consider points $(\pm 1:1:0)$.  
We set 
$$ \tilde{X}=\frac{1}{2}(X+Y),\ \tilde{Y}=\frac{1}{2}(X-Y),\ \tilde{Z}=Z$$
and take the linear transformation $\phi:(X:Y:Z) \mapsto (\tilde{X}:\tilde{Y}:\tilde{Z})$.
Then, $\phi^{-1}((1:1:0))=(1:0:0)$, $\phi^{-1}((-1:1:0))=(0:1:0)$, and $\phi^{-1}(C)$ is given by 
$$ G=(2+a)X^4+(2+a)Y^4+Z^4+(12-2a)X^2Y^2+2bY^2Z^2+2bX^2Z^2=0. $$
By Theorem \ref{standard form}, $\phi^{-1}((\pm 1:1:0))$ are quasi-Galois. 
Therefore, $(\pm1:0:0)$ are quasi-Galois. 
Furthermore, if $\phi^{-1}((1:1:0))$ is Galois, then the matrix
$$ \left(\begin{array}{ccc} i & 0 & 0 \\
0 & 1 & 0 \\
0 & 0 & 1 \end{array}\right)$$
acts on $G$. 
This implies $12-2a=0$ and $2b=0$. 

Let $b=0$. 
We consider points $(\pm i:1:0)$.  
We set 
$$ \tilde{X}=\frac{1}{2}(X+iY),\ \tilde{Y}=\frac{1}{2}(X-iY),\ \tilde{Z}=Z$$
and take the linear transformation $\phi:(X:Y:Z) \mapsto (\tilde{X}:\tilde{Y}:\tilde{Z})$.
Then, $\phi^{-1}((i:1:0))=(1:0:0)$, $\phi^{-1}((-i:1:0))=(0:1:0)$, and $\phi^{-1}(C)$ is given by 
$$ H=(2-a)X^4+(2-a)Y^4+Z^4+(12+2a)X^2Y^2=0. $$
By Theorem \ref{standard form}, $\phi^{-1}((\pm i:1:0))$ are quasi-Galois. 
Therefore, $(\pm i:0:0)$ are quasi-Galois. 
Furthermore, if $a \ne -6$, then $(\pm i:0:0)$ are not Galois. 

We prove (3). 
Now, we have six quasi-Galois points on the line $Z=0$. 
By the defining equation, we infer that $Q=(0:0:1)$ is an outer Galois point and the set $F[Q]\setminus \{Q\}$ is given by $Z=0$. 
Assume that $\delta'[2]>6$. 
Then there exists a quasi-Galois point $R \in \mathbb P^2 \setminus (\{Z=0\} \cup \{Q\})$. 
Let $\tau \in G[R]$ be the involution. 
If $\tau(Q)=Q$, then by Lemma \ref{pair 1}, $(R, Q)$ is a $G$-pair. 
Then $R$ must lie on the line $Z=0$. 
This is a contradiction. 
If $\tau(Q) \ne Q$ is not a $G$-pair, then we have two Galois points. 
It follows from a theorem of Yoshihara \cite{yoshihara1} that $C$ is projectively equivalent to the Fermat curve. 
By Theorem \ref{Fermat, number}, we have $\delta'[2]=12$.  
\end{proof} 

We consider the case where $C \cap \ell$ consists of three points. 

\begin{proposition} \label{three points}
If $\# C \cap \ell=3$, then $\#\Delta'\cap\ell=0$ or $1$. 
\end{proposition}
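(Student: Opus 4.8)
The plan is to analyze a line $\ell$ meeting $C$ in exactly three points and show that at most one quasi-Galois point can lie on it. The key structural fact, established in Lemma~\ref{four cover}, is that for any $P \in \Delta'$ the projection $\pi_P$ factors as a double cover $g_P : C \to C/\sigma$ followed by a double cover $f_P$ of $\mathbb{P}^1$, so the associated involution $\sigma \in G[P]$ must pair up the points of $C \cap \ell$ that are \emph{not} ramification points of $\pi_P$. Since $\# C \cap \ell = 3$ is odd, no involution can act freely on $C \cap \ell$ by swapping points in pairs; at least one of the three points must be fixed by $\sigma|_\ell$. The first step is therefore to record the possible ways an involution $\sigma|_\ell$ can act on a three-element set: it either fixes all three points, or fixes one and swaps the other two.

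Next I would rule out or constrain each case using the geometry of the fibers. If $\sigma|_\ell$ fixes a point $Q \in C \cap \ell$, then $Q$ is a ramification point of $\pi_P$, so by Fact~\ref{index} the line $\ell = \overline{PQ}$ is tangent to $C$ at $Q$ (when $P \ne Q$) or $\ell = T_P C$ (when $P = Q$); in either situation the intersection multiplicity at $Q$ is at least $2$. If $\sigma|_\ell$ swaps the other two points $Q', Q''$, these contribute multiplicity $1$ each, forcing $I_Q(C,\ell) = 2$, giving a tangent line meeting $C$ at two further distinct points. If instead $\sigma|_\ell$ fixes all three points, then all three are ramification points of $\pi_P$ of even multiplicity, which by B\'ezout ($\sum I_{Q_i}(C,\ell) = 4$) is incompatible with having exactly three distinct intersection points whose multiplicities sum to $4$ with each even — so this case is excluded, and $\sigma|_\ell$ must be the fix-one-swap-two type, with the unique fixed point carrying multiplicity $2$.

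The heart of the argument is to suppose two distinct quasi-Galois points $P_1, P_2 \in \Delta' \cap \ell$ exist and derive a contradiction. Each $\sigma_i|_\ell$ is a nontrivial involution of $\ell \cong \mathbb{P}^1$ fixing exactly the unique ramified point $Q_i$ of its fiber together with $P_i$ itself. The crucial observation is that a nontrivial involution of $\mathbb{P}^1$ has exactly two fixed points, so $\sigma_i|_\ell$ is determined by the fixed pair $\{P_i, Q_i\}$. By Lemma~\ref{four cover}(2) there is no line through $P_i$ meeting $C$ with multiplicity $3$ at a point, so the tangency structure is rigid: the swapped pair must consist of two genuinely distinct simple intersection points, and the fixed point $Q_i$ is the unique tangency point on $\ell$ from $P_i$. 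I would then compare $\sigma_1|_\ell$ and $\sigma_2|_\ell$: if $Q_1 = Q_2$ the two involutions share a fixed point on $C \cap \ell$ but have distinct second fixed points $P_1 \ne P_2$, and composing them yields an element of order $\ge 3$ fixing $Q_1$, contradicting that the stabilizer structure forces the order to divide the involution data; if $Q_1 \ne Q_2$, then $\sigma_1|_\ell$ swaps a pair including $Q_2$, so $Q_2$ is not fixed by $\sigma_1$, yet $Q_2$ is a tangency/ramification point, and tracking how $\sigma_1$ moves the tangent line $\overline{P_2 Q_2}$ produces a fourth tangent configuration incompatible with $\# C \cap \ell = 3$.

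The main obstacle will be the bookkeeping in this final comparison step: one must carefully use that each involution's action on $\ell$ is pinned down by its two fixed points, translate the intersection-multiplicity constraints from B\'ezout and Lemma~\ref{four cover} into statements about which points can be fixed versus swapped, and verify that no consistent assignment exists for two distinct points. I expect the cleanest route is to exploit that two distinct nontrivial involutions of $\mathbb{P}^1$ generate an infinite or large group unless they share both fixed points, so $\sigma_1|_\ell = \sigma_2|_\ell$ is forced; but then the shared swapped pair and shared fixed ramification point would make $\{P_1, Q\}$ and $\{P_2, Q\}$ both equal to the unique fixed-point pair, giving $P_1 = P_2$, the desired contradiction. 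Thus $\#\Delta' \cap \ell \le 1$, and combined with the trivial possibility of zero quasi-Galois points this yields $\#\Delta' \cap \ell = 0$ or $1$.
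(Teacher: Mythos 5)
Your argument is correct in substance, but it reaches the contradiction by a genuinely different route than the paper. Both proofs share the same first step: since $P_i \notin C$, the ramification index of $\pi_{P_i}$ at any $Q \in C \cap \ell$ equals $I_Q(C,\ell)$, so the multiplicity pattern $(2,1,1)$ forces each involution $\sigma_i$ to fix the unique tangency point $Q_1$ (where $T_{Q_1}C = \ell$) and swap the two simple points --- this is exactly the content of the paper's terse assertion that $Q_1 \in C \cap F[P_1] \cap F[P_2]$, which you make explicit. From there the paper finishes in one line by citing Proposition \ref{two quasi-Galois}(2), whose proof rests on the group-theoretic machinery (Lemma \ref{total-cyclic} and Corollary \ref{two groups}). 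You instead argue directly on the line: $\sigma_1|_\ell$ and $\sigma_2|_\ell$ are nontrivial involutions of $\ell \cong \mathbb{P}^1$ that act identically on the three points of $C \cap \ell$ (fix $Q_1$, swap the other two), hence are equal, since an automorphism of $\mathbb{P}^1$ is determined by the images of three points; equal involutions have equal fixed-point pairs, so $\{P_1, Q_1\} = \{P_2, Q_1\}$ and $P_1 = P_2$. This is an elementary, self-contained re-proof of precisely the special case of Proposition \ref{two quasi-Galois} needed here; what the paper's citation buys is uniformity, as that proposition covers arbitrary orders $n_1, n_2 \ge 2$ and inner points as well.

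Two local slips are worth correcting, though neither damages the final argument. First, your case distinction $Q_1 \ne Q_2$ is vacuous: the fixed point of $\sigma_i|_\ell$ on $C \cap \ell$ is the point where $\ell$ itself is tangent, which is intrinsic to $\ell$ and independent of $P_i$, so both involutions automatically fix the \emph{same} point; the hand-waving about a ``fourth tangent configuration'' can simply be deleted. Second, the claim that composing the two involutions ``yields an element of order $\ge 3$ fixing $Q_1$'' is the opposite of the truth: since both involutions agree on all three points of $C \cap \ell$, the composition fixes three points of $\ell$ and is therefore the \emph{identity} on $\ell$ --- which is exactly what forces $\sigma_1|_\ell = \sigma_2|_\ell$, as your final paragraph then correctly exploits. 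Likewise, ``two distinct nontrivial involutions of $\mathbb{P}^1$ generate an infinite or large group'' is false in general (they can generate a finite dihedral group); the clean justification is agreement on three points, or, if you prefer the dynamical phrasing, that two distinct involutions sharing one fixed point compose to a nontrivial parabolic element (in characteristic zero), which cannot permute the finite set $C \cap \ell$.
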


\begin{proof}
Let $C \cap \ell=\{Q_1, Q_2, Q_3\}$, let $T_{Q_1}C=\ell$, and let $P_1, P_2 \in \ell$ be different quasi-Galois points. 
Then, $Q_1 \in C \cap F[P_1] \cap F[P_2]$. 
This is a contradiction to Proposition \ref{two quasi-Galois}. 
\end{proof}

We consider the case where $C \cap \ell$ consists of two points.

\begin{proposition} \label{two points} 
Let $C \cap \ell=\{Q_1, Q_2\}$, where $Q_1 \ne Q_2$. 
\begin{itemize} 
\item[(1)] If $I_{Q_1}(C, \ell)=3$, then $\#\Delta'\cap\ell=0$. 
\item[(2)] If $T_{Q_1}C=T_{Q_2}C=\ell$, then $\#\Delta'\cap\ell=0, 1$ or $3$.
\end{itemize}
\end{proposition}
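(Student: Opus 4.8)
For part (1), I would simply invoke Lemma \ref{four cover}(2). If some $P \in \Delta'\cap\ell$ existed, then $\ell$ would be a line through $P$ possessing a point $Q_1\in C\cap\ell$ with $I_{Q_1}(C,\ell)=3$, which is precisely what that lemma forbids. Hence $\#\Delta'\cap\ell=0$.

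For part (2) the plan is to identify $\Delta'\cap\ell$ with a set of involutions and then count the latter. The key local observation is that if $P\in\Delta'\cap\ell$ and $\sigma\in G[P]$ is its involution, then $\sigma(\ell)=\ell$ and $\sigma|_\ell$ is an involution of $\ell\cong\mathbb P^1$ fixing $P$; since $P\notin C$ and an involution of $\mathbb P^1$ has only two fixed points, $\sigma|_\ell$ cannot fix both $Q_1$ and $Q_2$, so it must interchange them. Conversely, every involution $\sigma\in\mathrm{Aut}(C)$ with $\sigma(Q_1)=Q_2$ preserves $\ell=\overline{Q_1Q_2}$, and writing its fixed locus in $\mathbb P^2$ as a point together with a line (Corollary \ref{fixed locus}, the case $n=2$ of Theorem \ref{standard form}), one checks that the isolated fixed point lies on $\ell$ and is exactly the associated quasi-Galois point. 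Because $G[P]$ is cyclic it contains a unique involution, so $\sigma\mapsto(\text{isolated fixed point})$ is a bijection between the set $\mathcal I$ of involutions of $\mathrm{Aut}(C)$ interchanging $Q_1,Q_2$ and $\Delta'\cap\ell$. It therefore suffices to prove $\#\mathcal I\in\{0,1,3\}$.

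I would then study $\mathcal I$ through the pointwise stabiliser $N$ of $\{Q_1,Q_2\}$ in $\mathrm{Aut}(C)$, which is cyclic by Lemma \ref{total-cyclic}. If $\mathcal I=\emptyset$ we get $0$; otherwise fix $\sigma_1\in\mathcal I$. Comparing actions on the common tangent line $T_{Q_1}C=T_{Q_2}C=\ell$ shows that conjugation by $\sigma_1$ inverts $N$, so for a generator $\rho$ of $N$ one has $(\rho^{j}\sigma_1)^2=\rho^{j}(\sigma_1\rho^{j}\sigma_1^{-1})\sigma_1^{2}=\rho^{j}\rho^{-j}=1$; as the interchanging elements form the single coset $N\sigma_1$, every element of $N\sigma_1$ is an involution and $\#\mathcal I=|N|$. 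Moreover, two distinct points of $\Delta'\cap\ell$ can never form a $G$-pair, since by Proposition \ref{pair 2} the line joining a $G$-pair meets $C$ in four points while $\#(C\cap\ell)=2$; by Lemma \ref{pair 1} this means $\sigma_1(P_2)\neq P_2$ for distinct $P_1,P_2$, so $\sigma_1(P_2)$ is a third quasi-Galois point on $\ell$. Thus the value $2$ is impossible, and it remains only to bound the count by $3$.

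Finally, to bound $|N|$ I would diagonalise $\rho$: its three fixed points are $Q_1,Q_2$ and a third point $R$, and $\sigma_1$ (which interchanges $Q_1,Q_2$ and hence fixes $R$) inverts $\rho$, forcing $\rho=\mathrm{diag}(\omega,1,\nu)$ with $\nu^{2}=\omega$ in these coordinates. The bitangency $C\cap\ell=2Q_1+2Q_2$ says that along $\ell=\{Z=0\}$ the quartic reduces to a nonzero multiple of $X^2Y^2$; combining this with $\rho$-invariance of $C$ restricts the admissible monomials of the defining polynomial so severely that, unless $\mathrm{ord}(\rho)\le 3$, the polynomial must factor, contradicting smoothness. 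The one degenerate possibility, where $\rho$ is a homology, I would treat separately: then $\rho\in G[Q_1]$ for the inner point $Q_1\in C$, so $\mathrm{ord}(\rho)$ divides $\deg\pi_{Q_1}=d-1=3$ by Lemma \ref{subgroup}. Either way $|N|\in\{1,3\}$, giving $\#\Delta'\cap\ell\in\{0,1,3\}$. I expect the main obstacle to be exactly this last step: carrying out the monomial/smoothness bookkeeping cleanly and disposing of the homology degeneration, the earlier reduction to counting $\mathcal I$ being essentially formal.
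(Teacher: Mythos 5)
Your part (1) coincides with the paper's (both are immediate from Lemma \ref{four cover}(2)). For part (2) you take a genuinely different route. The paper shares your middle step verbatim --- Proposition \ref{pair 2} shows no two points of $\Delta'\cap\ell$ form a $G$-pair, and Lemma \ref{pair 1} then turns $\#\Delta'\cap\ell\ge 2$ into $\ge 3$ --- but from there it works directly with the composite $\sigma_1\sigma_2$ of two of the involutions: this fixes $Q_1$, $Q_2$ and the intersection point $R$ of the two axes $F[P_i]\setminus\{P_i\}$; analysing its restriction to the lines $\overline{Q_iR}$ and using that $Q_1,Q_2$ are not inner Galois (Facts \ref{index} and \ref{Galois covering}(2)) and $R\notin C$, the paper pins $\sigma_1\sigma_2=\mathrm{diag}(\zeta,\zeta^2,1)$ with $\zeta$ a primitive cube root of unity, and finally shows the involution of any fourth point would equal $\sigma_2$ or $\sigma_2\sigma_1\sigma_2$, giving the bound $3$. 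You instead transfer the whole count into the setwise stabiliser of $\{Q_1,Q_2\}$: your $\rho$ is in effect the paper's $\sigma_1\sigma_2$ (which indeed lies in your $N$), your identity $\#\mathcal I=|N|$ replaces the paper's ad hoc fourth-point argument, and your invariant-monomial bookkeeping replaces the paper's geometric order-$3$ computation. The bookkeeping does close as you expect: with $\rho=\mathrm{diag}(\nu^2,1,\nu)$ and the $Z=0$ part of $F$ equal to $X^2Y^2$, order $\ge 4$ leaves only the monomials $X^2Y^2$, $XYZ^2$, $Z^4$, so $F=(XY-sZ^2)(XY-tZ^2)$ is reducible; your approach is more structural, while the paper's avoids invariant theory by pure projective geometry.

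One step, however, is genuinely under-argued: the assertion that conjugation by $\sigma_1$ inverts $N$. Comparing actions on $\ell$ only shows that $\bigl(\sigma_1\rho\sigma_1^{-1}\cdot\rho\bigr)\big|_\ell=\mathrm{id}$, i.e.\ that conjugation inverts the \emph{restriction} $\rho|_\ell$; to upgrade this to $\sigma_1\rho\sigma_1^{-1}=\rho^{-1}$ in ${\rm Aut}(C)$ you need that no nontrivial automorphism of $C$ restricts to the identity on $\ell$. Such an element would be a homology with axis $\ell$ (elations have infinite order in characteristic $0$), hence would lie in $G_0[P_0]$ for its centre $P_0$ with $F[P_0]\setminus\{P_0\}=\ell$, and Proposition \ref{tangent2} would then force four points of $C$ on $\ell$ --- impossible since $\# C\cap\ell=2$. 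This exclusion must be supplied, because your count really rides on inversion: if conjugation by $\sigma_1$ were $\rho\mapsto\rho^k$ with $k\not\equiv-1$, then $(\rho^j\sigma_1)^2=\rho^{j(1+k)}$ and not every element of the coset $N\sigma_1$ would be an involution, so $\#\mathcal I=|N|$ would fail; the same fact also underwrites your normal form $\nu^2=\omega$. Relatedly, your ``degenerate homology'' case should also account for the homology with axis $\ell$ itself, $\mathrm{diag}(1,1,-1)$, not only the one centred at $Q_1$; it is harmless (it is killed by the exclusion just stated, or directly because invariance plus the bitangency condition forces a singular point at $Q_1$, and in any case $|N|=2$ is already ruled out by your $G$-pair step), but as written your case analysis misses it. With these points made explicit, your argument is complete.
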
 

\begin{proof}
Assertion (1) is derived from Lemma \ref{four cover}(2). 
We consider assertion (2). 
Let $P_1, P_2 \in \ell$ be quasi-Galois points, and let $\sigma_i \in G[P_i]$ be the involution. 
By Proposition \ref{pair 2}, $(P_1, P_2)$ is not a $G$-pair. 
By Lemma \ref{pair 1}, $\sigma_1(P_2) \ne P_2$, and hence, $\#\Delta'\cap\ell\ge 3$. 
We consider $\sigma_1\sigma_2$. 
Then, $\sigma_1\sigma_2(Q_1)=Q_1$ and $\sigma_1\sigma_2(Q_2)=Q_2$. 
Let $R$ be the intersection point of the lines $F[P_1]\setminus \{P_1\}$ and $F[P_2]\setminus \{P_2\}$. 
Then, $\sigma_1\sigma_2(R)=\sigma_1(R)=R$.  
If $R \in C$, then $T_RC \ni P_1, P_2$. 
Therefore, $R \not\in C$. 
For a suitable system of coordinates, we can assume that $Q_1=(1:0:0), Q_2=(0:1:0)$ and $R=(0:0:1)$. 
Consider the action of $\sigma_1\sigma_2$ on the lines $\overline{Q_1R}$ and $\overline{Q_2R}$. 
Since $\sigma_1\sigma_2$ fixes $Q_1, Q_2$ and $R$, $\sigma_1\sigma_2|_{\overline{Q_iR}}$ is identity if $\sigma_1\sigma_2$ fixes some point of $C \cap \overline{Q_iR}$ other than $Q_i$.
Therefore, the restriction $\sigma_1\sigma_2|_{\overline{Q_iR}}$ is identity or of order three for $i=1, 2$. 
Then, $\sigma_1\sigma_2$ is represented by the matrix 
$$ A_{\sigma_1\sigma_2}=\left(\begin{array}{ccc} \zeta & 0 & 0 \\ 0 & \eta & 0 \\ 0 & 0 &1 \end{array} \right), $$
where $\zeta$ and $\eta$ are cubic roots of $1$. 
Since $Q_1$ and $Q_2$ are not inner Galois (by Facts \ref{index} and \ref{Galois covering}(2)) and $R \not\in C$, we have $\eta \ne 1$, $\zeta \ne 1$ and $\zeta \ne \eta$. 
This implies that $\eta=\zeta^2$.

Let $P_3:=\sigma_1\sigma_2(P_2)$ ($=\sigma_1(P_2)$). 
Since $\sigma_1\sigma_2(F[P_2])=F[P_3]$, $R \in F[P_3]$. 
Assume by contradiction that $\#\Delta'\cap\ell \ge 4$. 
Let $P_4 \ne P_1, P_2, P_3$ be quasi-Galois, and let $\sigma_4 \in G[P_4]$ be the involution. 
Since $\sigma_1\sigma_4(Q_i)=Q_i$ for $i=1, 2$ and the order of $\sigma_1\sigma_4$ is three, we have $\sigma_1\sigma_4|_{\ell}=\sigma_1\sigma_2|_{\ell}$ or $(\sigma_1\sigma_2)^2|_{\ell}$. 
Then, we find that $\sigma_1\sigma_4(R)=R$, and hence, $\sigma_1\sigma_4=\sigma_1\sigma_2$ or $(\sigma_1\sigma_2)^2$ on $\mathbb P^2$. 
We have $\sigma_4=\sigma_2 \in G[P_2]$ or $\sigma_4=\sigma_2\sigma_1\sigma_2 \in G[P_3]$. 
This is a contradiction. 
\end{proof}

We consider the case where $C \cap \ell$ consists of a unique point.

\begin{proposition} \label{one point} 
If $\# C \cap \ell=1$, then $\#\Delta'\cap\ell=0$ or $1$. 
\end{proposition}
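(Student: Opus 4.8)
The plan is to show that any quasi-Galois point on $\ell$ forces the unique intersection point $Q \in C \cap \ell$ into the fixed locus $F[P]$, and then to derive a contradiction from Proposition \ref{two quasi-Galois}(2) if there were two such points.

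First I would record that, since $C$ is smooth of degree $4$ and meets $\ell$ only at $Q$, the intersection multiplicities along $\ell$ sum to $4$, forcing $I_Q(C, \ell)=4$. Now let $P \in \Delta' \cap \ell$. Since $P \notin C$ we have $P \ne Q$ and $\overline{PQ}=\ell$, so by Fact \ref{index}(2) the ramification index of $\pi_P$ at $Q$ is $e_Q=I_Q(C, \overline{PQ})=4=\deg \pi_P$; that is, $Q$ is a total ramification point of $\pi_P$.

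Next I would exploit the factorization $\pi_P=f_P \circ g_P$, where $g_P: C \rightarrow C/\langle \sigma \rangle$ is the double cover associated with the involution $\sigma \in G[P]$ and $\deg f_P=2$, exactly as in the proof of Lemma \ref{four cover}. Multiplicativity of ramification indices gives $e_Q(g_P)\cdot e_{g_P(Q)}(f_P)=4$, and since each factor is at most $2$, both must equal $2$. In particular $g_P$ is ramified at $Q$, so the stabilizer $G[P](Q)$ has order $2$ by Fact \ref{Galois covering}(1); equivalently $\sigma(Q)=Q$, and hence $Q \in F[P]$ by Corollary \ref{fixed locus}.

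Finally, suppose toward a contradiction that $P_1, P_2 \in \Delta' \cap \ell$ are distinct. By the preceding step $Q \in F[P_1] \cap F[P_2]$, and as $Q \in C$ this yields $Q \in C \cap F[P_1] \cap F[P_2]$. But $P_1, P_2 \in \mathbb{P}^2 \setminus C$, so Proposition \ref{two quasi-Galois}(2) forces $C \cap F[P_1] \cap F[P_2]=\emptyset$, a contradiction. Therefore $\#\Delta' \cap \ell \le 1$, i.e. $\#\Delta' \cap \ell=0$ or $1$. The only delicate step is the third one—passing from total ramification of $\pi_P$ to ramification of the double cover $g_P$—but this is immediate once the factorization through the involution is invoked, so I expect no genuine obstacle.
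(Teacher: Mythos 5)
Your proof is correct, and its skeleton coincides with the paper's: both reduce the statement to producing a point $Q \in C \cap F[P_1] \cap F[P_2]$ for two distinct $P_1, P_2 \in \Delta' \cap \ell$, which contradicts Proposition \ref{two quasi-Galois}(2). The only divergence is in how you establish $\sigma(Q)=Q$: you run a ramification computation --- $I_Q(C,\ell)=4$ gives total ramification of $\pi_P$ at $Q$ by Fact \ref{index}(2), the factorization of $\pi_P$ through the involution's double cover forces $e_Q(g_P)=2$, and Fact \ref{Galois covering}(1) then yields that the involution fixes $Q$ --- whereas the paper treats the membership $Q \in F[P_i]$ as immediate, the unstated one-line reason being that each $\sigma \in G[P_i]$ is a linear transformation fixing $P_i$ and preserving every line through $P_i$ (visible from the standard form of Theorem \ref{standard form}), so it permutes the one-point set $C \cap \ell$ and must fix $Q$. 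Your detour is valid (and correctly set up so that it works uniformly whether $|G[P]|=2$ or $4$, since you quotient only by the involution rather than by all of $G[P]$), but it buys nothing here: the direct linear-action observation gives $Q \in F[P]$ without any appeal to ramification or to the genus computation behind Lemma \ref{four cover}.
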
 

\begin{proof}
Let $C \cap \ell=\{Q\}$, and let $P_1, P_2$ be different quasi-Galois points. 
Then, $Q \in F[P_1] \cap F[P_2]$. 
This is a contradiction to Proposition \ref{two quasi-Galois}. 
\end{proof}

\begin{theorem} \label{quartic} 
Let $C \subset \mathbb P^2$ be a smooth curve of degree four. 
Then, 
$$ \delta'[2] \le 21. $$
Furthermore, the equality holds if and only if $C$ is projectively equivalent to the curve defined by 
$$ X^4+Y^4+Z^4+a(X^2Y^2+Y^2Z^2+Z^2X^2)=0, $$
where $a \in K$ satisfies $a^2+3a+18=0$. 
\end{theorem}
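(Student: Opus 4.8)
The plan is to establish the inequality and the equality case together, in the style of the proof of Theorem~\ref{sextic}: reduce an extremal curve to a normal form by exhibiting a $G$-pair, and then read off every quasi-Galois point from the line-by-line counts already assembled in Propositions~\ref{four points}--\ref{one point}.

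For the counting framework I would organise $\Delta'$ around the branch lines furnished by Lemma~\ref{four cover}. Each $P\in\Delta'$ lies on exactly four lines $\ell$ that are tangent to $C$ at every point of $C\cap\ell$, and each such line is either one of the $28$ bitangents of the smooth quartic $C$ or the tangent at a hyperflex. Counting the incidences between $\Delta'$ and these lines gives
\[
4\,\delta'[2]=\sum_{\ell}\#(\Delta'\cap\ell),
\]
the sum running over bitangents and hyperflex tangents; Proposition~\ref{two points}(2) caps each bitangent's contribution at $3$ and Proposition~\ref{one point} caps each hyperflex tangent at $1$. By itself this yields only $\delta'[2]\le 24$, but it does show that any curve with $\delta'[2]\ge 11$ has a bitangent meeting $\Delta'$ in three points, so that the order-three linear automorphism produced in the proof of Proposition~\ref{two points}(2) is available.

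With this structure in hand I would, as in the first step of the proof of Theorem~\ref{sextic}, force the existence of a $G$-pair: were no involution $\sigma_{P_1}$ to fix a second quasi-Galois point, the orbits of $\Delta'$ under the associated linear automorphisms would, by Corollary~\ref{fixed locus} and Proposition~\ref{two quasi-Galois}, exceed the count permitted above. Given a $G$-pair $(P_1,P_2)$, Proposition~\ref{pair 2} normalises $P_1=(1:0:0)$, $P_2=(0:1:0)$ and
\[
C\colon X^4+Y^4+Z^4+aX^2Y^2+bY^2Z^2+cZ^2X^2=0,
\]
so that $(0:0:1)\in\Delta'$ as well. I would then enumerate $\Delta'$ exhaustively for this equation: the three coordinate lines are $4$-secants on which Propositions~\ref{four-six points 1} and \ref{four-six points 2} convert the number of quasi-Galois points into conditions on $a,b,c$, while every remaining line is governed by Propositions~\ref{three points}, \ref{two points} or \ref{one point}. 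Driving the total to its maximum forces the symmetric shape $a=b=c$, and the existence of a quasi-Galois point off the coordinate triangle --- treated exactly as in Theorem~\ref{sextic}, by requiring the linear map realising its involution to preserve $C$ --- pins $a$ to a root of $a^2+3a+18=0$. For that value one exhibits $21$ explicit quasi-Galois points and appeals to the bound to conclude $\delta'[2]=21$.

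The main obstacle is that the raw incidence count is not sharp: it allows hyperflex tangents to inflate the total up to $24$, so the genuine work is the combinatorial reduction showing that, short of the symmetric configuration above, the several line counts of Propositions~\ref{four points}--\ref{one point} are mutually incompatible with more than $21$ points --- in particular one must exclude a point all four of whose branch lines are hyperflex tangents. A secondary difficulty is arriving at the exact coefficient relation $a^2+3a+18=0$ rather than a spurious value, which requires the careful monomial comparison already rehearsed in the proofs of Theorems~\ref{sextic} and \ref{d/2}.
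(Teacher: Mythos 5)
Your outline of the equality case follows the paper's route (force a $G$-pair, normalize via Propositions~\ref{pair 2} and \ref{four-six points 1} to the symmetric quartic, pin down $a$ by comparing coefficients under an explicit involution, then exhibit $21$ points for the converse), but the central inequality $\delta'[2]\le 21$ is left with a genuine gap, and by your own admission: your incidence count over totally tangent lines only yields $\delta'[2]\le 24$, and the proposed remedy --- excluding a point all four of whose branch lines are hyperflex tangents --- does not close the deficit, since even then each point could contribute up to three hyperflex incidences and your accounting still exceeds $84$ total incidences. The paper's proof of the bound works per point rather than by a global incidence sum, and its decisive mechanism is absent from your plan: for a fixed $P\in\Delta'$, any $4$-secant line $\ell\ni P$ with $\#\Delta'\cap\ell\ge 2$ produces, by Proposition~\ref{four points}, a partner $P'$ with $(P,P')$ a $G$-pair, hence $P'\in F[P]\setminus\{P\}$; since $F[P]\setminus\{P\}$ is a single line carrying at most $4$ points of $\Delta'$, there are at most four such $4$-secants through $P$. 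Combined with Lemma~\ref{four cover} and Proposition~\ref{two points} (at most four totally tangent lines through $P$, each with at most $3$ points of $\Delta'$), this gives $\delta'[2]\le 1+2\times 4+3\times 4=21$. Note also a preliminary step you do not address: under the assumption $\delta'[2]\ge 13$ one must first rule out lines with $\#\Delta'\cap\ell=6$, which the paper does via Proposition~\ref{four-six points 1}(2) (three $G$-pairs force $b=c=0$) together with Proposition~\ref{four-six points 2}(3) (such curves have $\delta'[2]\le 12$); without this cap of $4$ points per line, neither the $F[P]$-count nor your ``driving the total to its maximum'' enumeration goes through.

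It is worth pointing out that your incidence idea could in fact be salvaged, but only by importing a classical fact not used in the paper: a hyperflex tangent is a degenerate bitangent, so a smooth quartic with $h$ hyperflexes has exactly $28-h$ ordinary bitangents, and the totally tangent lines number $28$ in all. Then Lemma~\ref{four cover}(1) gives exactly $4\delta'[2]$ incidences, and Propositions~\ref{two points}(2) and \ref{one point} yield $4\delta'[2]\le 3(28-h)+h\le 84$, i.e.\ $\delta'[2]\le 21$ directly --- arguably cleaner than the paper's count. Your version fails precisely because you treat the hyperflex tangents as lines \emph{additional} to the $28$ bitangents, which is where the spurious bound $24$ comes from. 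As written, the proposal neither carries out this corrected count nor reproduces the paper's $G$-pair/$F[P]$ argument, so the inequality --- and with it the ``only if'' direction, which in the paper is extracted from the structure of the extremal configuration in that very count --- remains unproven.
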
 

\begin{proof} 
Assume that $\delta'[2] \ge 13$. 
It follows from Propositions \ref{four points}, \ref{four-six points 1}, \ref{four-six points 2}, \ref{three points}, \ref{two points} and \ref{one point} that $\#\Delta'\cap\ell \le 4$ for each line $\ell$, and $\#\Delta'\cap\ell\ge 2$ only if $\# C \cap \ell=4$ or $\ell$ is the tangent line at two distinct points of $C \cap \ell$. 
Let $P \in \Delta'$.
By Lemma \ref{four cover} and Proposition \ref{two points}, there exist at most four lines $\ell \ni P$ such that $\# C \cap \ell \le 2$ and $\#\Delta' \cap \ell \ge 2$ (in this case $\#\Delta' \cap \ell=3$). 
By Proposition \ref{four points}, if a line $\ell \ni P$ satisfies $\# C \cap \ell=4$ and $\#\Delta'\cap\ell\ge 2$, then there exists a point $P' \in \Delta' \cap \ell$ such that $(P, P')$ is a $G$-pair. 
Then, $P' \in F[P]\setminus \{P\}$. 
Since the set $F[P]\setminus \{P\}$ is a line, $\# \Delta' \cap F[P]\setminus\{P\} \le 4$. 
Therefore, we have at most four lines $\ell \ni P$ such that $\# C \cap \ell=4$ and $\#\Delta' \cap \ell \ge 2$. 
As a consequence, we have an inequality $\delta'[2] \le 1+2 \times 4+3\times 4=21$. 

Assume that $\delta'[2]=21$.  
Let $P=(1:0:0) \in \Delta'$. 
By the above discussion, there exist four lines $\ell_1, \ell_2, \ell_3, \ell_4 \ni P$ such that $\# C \cap \ell_i=4$ and $\#\Delta' \cap \ell_i=4$ for $i=1, 2, 3, 4$. 
By Proposition \ref{four points}, for each $i$, there exists a point $P_i \in \ell_i$ such that $(P, P_i)$ is a $G$-pair. 
Then, by Proposition \ref{four-six points 1}, we can assume that $P=(1:0:0)$, $P_1=(0:1:0)$, $P_2=(0:0:1)$, and $C$ is given by 
$$ F=X^4+Y^4+Z^4+a(X^2Y^2+Y^2Z^2+Z^2X^2)=0 $$
for some $a \in K$. 
It follows from Proposition \ref{four-six points 2} that $P_3:=(0:1:1)$ and $P_4:=(0:-1:1) \in F[P]\setminus \{P\}=\{X=0\}$ are quasi-Galois. 
Therefore, four lines $\ell_1, \ell_2, \ell_3$ and $\ell_4$ are $\overline{PP_1}$, $\overline{PP_2}$, $\overline{PP_3}$ and $\overline{PP_4}$, which are defined by $Z=0$, $Y=0$, $Y-Z=0$ and $Y+Z=0$ respectively. 
Note that $(P_3, P_4)$ is a $G$-pair, since $(P_1, P_2)$ is a $G$-pair. 

Let $Q \in \Delta' \cap \overline{PP_4}$ other than $P$ or $P_4$, and let $\tau \in G[Q]$ be the involution. 
Note that $\tau(P_4)=P$, since $(P, P_4)$ is a $G$-pair. 
Since $(P, P_3)$ and $(P_3, P_4)$ are $G$-pairs, $F[P_3]\setminus \{P_3\}=\overline{PP_4} \ni Q$, and hence, $(P_3, Q)$ is a $G$-pair. 
Therefore $\tau(P_3)=P_3$. 
Since $\tau((0:1:1))=(0:1:1)$, $\tau((0:-1:1))=(1:0:0)$, and $\tau((1:0:0))=(0:-1:1)$, $\tau$ is represented by the matrix 
$$ \left(\begin{array}{ccc}
0 & \frac{2}{\lambda} & -\frac{2}{\lambda} \\
\lambda & 1 & 1 \\
-\lambda & 1 & 1 
\end{array}\right), $$
where $\lambda \in K$. 
Then, $(\tau^{-1})^*F$ and $F$ are the same up to a constant. 
Here 
\begin{eqnarray*} 
(\tau^{-1})^*F&=&\left(\frac{2}{\lambda}\right)^4(Y-Z)^4+(\lambda X+Y+Z)^4+(-\lambda X+Y+Z)^4\\ & &+a\left(\frac{2}{\lambda}\right)^2(Y-Z)^2(\lambda X+Y+Z)^2+a(\lambda X+Y+Z)^2(-\lambda X+Y+Z)^2 \\ & & +a\left(\frac{2}{\lambda}\right)^2(-\lambda X+Y+Z)^2(Y-Z)^2. 
\end{eqnarray*}
The coefficient of $X^2YZ$ is 
$$ 12\lambda^2+12\lambda^2-2a\left(\frac{2}{\lambda}\right)^2\lambda^2-4a\lambda^2-2a\left(\frac{2}{\lambda}\right)^2\lambda^2=0. $$
We have $\lambda^2=4a/(6-a)$. 
The coefficient of $Y^3Z$ is 
$$ -4\left(\frac{2}{\lambda}\right)^4+4+4+4a=0. $$
We have $a^3+a^2+12a-36=0$.  
Since $a \ne 2$, we have $a^2+3a+18=0$.

On the contrary, let $a^2+3a+18=0$, and let $C$ be the plane curve given by 
$$ F=X^4+Y^4+Z^4+a(X^2Y^2+Y^2Z^2+Z^2X^2)=0.  $$
By Proposition \ref{four-six points 2}, we have $9$ quasi-Galois points on the union of the lines $X=0$, $Y=0$ and $Z=0$. 
Let $\lambda$ be a solution of $\lambda^2=4a/(6-a)$, and let $\tau$ be the involution given by
$$ \left(\begin{array}{ccc}
0 & \frac{2}{\lambda} & -\frac{2}{\lambda} \\
\lambda & 1 & 1 \\
-\lambda & 1 & 1 
\end{array}\right).  
$$
Then, $\tau$ acts on $C$. 
Since $\tau(\{Y+Z=0\})=\{Y+Z=0\}$ and $\tau$ is not identity on this line, by the proof of Proposition \ref{involution}, $\tau$ is the involution of some quasi-Galois point $Q$ on the line $Y+Z=0$ other than $(1:0:0)$ or $(0:-1:1)$. 
By considering the actions associated with points $(1:0:0)$, $(0:1:0)$ and $(0:0:1)$, we have four quasi-Galois points not in $\{XYZ=0\}$. 
Note that $Q$ is different from $(\pm1:\pm1:1)$. 
Using the linear transformation given by $(X:Y:Z) \mapsto (Z:X:Y)$, we have $4 \times 3$ additional quasi-Galois points. 
Therefore, we have $\delta'[2] \ge 9+12=21$. 
Since $\delta'[2]\le 21$, we have $\delta'[2]=21$. 
\end{proof}

\begin{remark} 
It is known that the curve defined by 
$$ X^4+Y^4+Z^4+a(X^2Y^2+Y^2Z^2+Z^2X^2)=0, $$
where $a \in K$ satisfies $a^2+3a+18=0$, is projectively equivalent to the Klein quartic $X^3Y+Y^3Z+Z^3X=0$ (\cite{klein}, \cite{rodriguez-gonzalez}). 
\end{remark} 

\section{Group structure}

Let $C$ be a smooth curve. 

\begin{proposition} \label{2n^2}
If $|G[P]|=n$ and $2n=\deg \pi_P$, then $|G_P|=2n^2$. 
\end{proposition}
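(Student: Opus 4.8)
The plan is to realize $G_P$ as the Galois group of an explicit field tower, reduce the assertion to a Kummer--independence statement, and then settle that statement by a ramification argument coming from the quotient $C/G[P]$.

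First I would normalize. Since $p=0$ and $C$ is smooth, $G[P]=G_0[P]$ is cyclic of order $n$, so by Theorem \ref{standard form} I may take $P=(1:0:0)$ and write $C$ in the standard form $\sum_i G_{d-ni}(Y,Z)X^{ni}=0$. For $P=(1:0:0)$ the map degree $\deg\pi_P$ equals the degree of the defining polynomial in $X$, so $\deg\pi_P=2n$ forces the $X$-degree to be exactly $2n$; hence only $i=0,1,2$ occur and, dehomogenizing, $C$ is $G_{d-2n}(y)t^2+G_{d-n}(y)t+G_d(y)=0$ with $t=x^n$. Put $x_1=x^n$ and let $x_2$ be the conjugate root, so $x_1+x_2,\ x_1x_2\in K(y)$. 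This is exactly the setting of Proposition \ref{Galois closure} with $r=2$: the fixed field $M_0:=K(C)^{G[P]}=K(y,x_1)$ is quadratic over $K(y)$, one has $K(C)=M_0(\sqrt[n]{x_1})$ with $[K(C):M_0]=n$, and the Galois closure is $L=M_0(\sqrt[n]{x_1},\sqrt[n]{x_2})$. From the tower $K(y)\subset M_0\subset K(C)\subset L$ we get $|G_P|=2\,[L:M_0]$, and Proposition \ref{Galois closure} (with $R=2$) already gives $2n\le |G_P|\le 2n^2$.

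The remaining task is to prove $[L:M_0]=n^2$. As $K\subset M_0$ contains the $n$-th roots of unity, Kummer theory gives $[L:M_0]=n^2/|R|$, where $R=\{(a,b)\in(\mathbb Z/n)^2 : x_1^ax_2^b\in(M_0^*)^n\}$, so I must show $R=0$, i.e. that $x_1,x_2$ are multiplicatively independent modulo $n$-th powers in $M_0$. I would detect this on valuations: if $x_1^ax_2^b\in(M_0^*)^n$ then $a\,v_{\mathfrak p}(x_1)+b\,v_{\mathfrak p}(x_2)\equiv 0\pmod n$ at every place $\mathfrak p$ of $M_0$ (equivalently, of the smooth model $C'$ of $C/G[P]$). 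Thus it suffices to produce one place where $(v(x_1),v(x_2))\equiv(\text{unit},0)$ and one where it is $\equiv(0,\text{unit})$ modulo $n$.

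Here the geometry enters, and this independence step is the one I expect to be the crux. The cover $\psi:C\to C'=C/G[P]$ is cyclic of degree $n$, and by Corollary \ref{fixed locus}, Proposition \ref{tangent2} and Fact \ref{Galois covering} it is totally ramified at exactly the $d$ points of $C\cap\{X=0\}=\{G_d(Y,Z)=0\}$. For the cyclic cover $M_0(\sqrt[n]{x_1})/M_0$, total ramification over $\bar Q=\psi(Q)$ is equivalent to $\gcd(v_{\bar Q}(x_1),n)=1$, and at such $Q$ one has $x_1=x^n=0$. Smoothness forces $G_d$ to have $d$ distinct roots (a multiple root would make all partials of $F$ vanish at the corresponding point of $\{X=0\}$, a singularity), while $\deg G_{d-n}+\deg G_{d-2n}=2d-3n<d$ because $d\le 2n+1<3n$; hence a root $(Y_0:Z_0)$ of $G_d$ exists with $G_{d-n}(Y_0:Z_0)\ne 0$ and $G_{d-2n}(Y_0:Z_0)\ne 0$. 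At the branch point $\bar Q_0$ above it, $x_2=-G_{d-n}/G_{d-2n}$ is a unit, so $(v_{\bar Q_0}(x_1),v_{\bar Q_0}(x_2))\equiv(\text{unit},0)$. Finally I would apply the nontrivial element $\tau\in\mathrm{Gal}(M_0/K(y))$, which swaps $x_1$ and $x_2$: at $\tau(\bar Q_0)$ one gets $(v(x_1),v(x_2))\equiv(0,\text{unit})$. These two places force $a\equiv b\equiv 0\pmod n$, so $R=0$, $[L:M_0]=n^2$, and $|G_P|=2n^2$. The essential difficulty is precisely this last part: converting the total ramification of $C\to C/G[P]$ into the coprimality $\gcd(v_{\bar Q}(x_1),n)=1$, and using smoothness to locate a branch point at which the conjugate root $x_2$ is a unit, which together rule out all Kummer relations.
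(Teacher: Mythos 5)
Your Kummer-theoretic route is correct in substance but genuinely different from the paper's argument. The paper proceeds geometrically: it factors $\pi_P$ as $g_P:C\to C/G[P]$ followed by the double cover $f_P:C/G[P]\to\mathbb P^1$, shows that $f_P$ must be unramified at $g_P(Q_i)$ for some $Q_i\in C\cap\{X=0\}$ (if every line $\overline{PQ_i}$ were totally tangent, the middle coefficient $G$ of the standard form would vanish, forcing $|G[P]|=2n$ by Theorem \ref{standard form}), and then observes that the Galois closure cover $h_P:\tilde C\to C$ must ramify with index at least $n$ over a point where $\pi_P$ is unramified, giving $|G_P|\ge 2n^2$; this is squeezed against the upper bound $|G_P|\le 2n^2$ of Proposition \ref{Galois closure}. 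You instead compute $[L:M_0]$ exactly, reducing to the absence of Kummer relations $x_1^ax_2^b\in(M_0^*)^n$ and ruling these out by valuations at a totally ramified place where the conjugate root is a unit, together with its image under the involution of $M_0/K(y)$. Your approach buys an exact determination of $G_P$'s order without the auxiliary cover $h_P$ and without needing the upper bound of Proposition \ref{Galois closure} as a separate input; the paper's squeeze avoids Kummer theory but works only because the two bounds happen to meet. Your translation of total ramification of $C\to C/G[P]$ into $\gcd(v_{\bar Q}(x_1),n)=1$ is valid here (char $0$, $K$ algebraically closed, and $[K(C):M_0]=n$), and your use of smoothness to get $d$ distinct roots of $G_d$ is exactly the content of Proposition \ref{tangent2}.

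There is, however, one real (though easily repaired) gap: the degree count $\deg G_{d-n}+\deg G_{d-2n}=2d-3n<d$ produces a root of $G_d$ avoiding the zeros of $G_{d-n}G_{d-2n}$ only if $G_{d-n}\not\equiv 0$. If $G_{d-n}$ vanishes identically, no such root exists, and your conclusion genuinely fails at that point: then $x_2=-x_1$, so $\sqrt[n]{x_2}=\zeta_{2n}\sqrt[n]{x_1}$ with $\zeta_{2n}\in K\subset M_0$, whence $[L:M_0]\le n$ and $|G_P|\le 2n$. Of course this configuration cannot occur under the hypothesis, but that must be argued: if $G_{d-n}\equiv 0$, the defining equation has the form $\sum_i G_{d-2ni}X^{2ni}=0$, so Theorem \ref{standard form} gives $2n\mid |G[P]|$, contradicting $|G[P]|=n$. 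This is precisely the step the paper's proof makes explicit (its ``$G=0$, hence $|G[P]|=2n$, a contradiction''), and it is the only place where the exact hypothesis $|G[P]|=n$, rather than merely $n\mid|G[P]|$, enters; note that $G_{d-2n}\not\equiv 0$ is automatic from $\deg\pi_P=2n$, so only the middle coefficient is at issue. A cosmetic point in the same step: choose coordinates (a linear change in $Y,Z$ preserves the standard form) so that the selected root of $G_d$ is not at $y=\infty$; otherwise the evaluation of $x_1+x_2=-G_{d-n}/G_{d-2n}$ as a unit at $\bar Q_0$ needs adjusting.
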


\begin{proof}
By Theorem \ref{standard form}, we can assume that $P=(1:0:0)$ and 
the defining equation is given by 
$$ F=X^{2n}Z+GX^n+H=0 \ (\mbox{ resp. } X^{2n}+GX^n+H=0), $$ 
if $P \in C$ (resp. $P \in \mathbb P^2 \setminus C$). 
Set $C \cap F[P]\setminus\{P\}=\{X=H=0\}=\{Q_1, \ldots, Q_d\}$. 
The projection $\pi_P$ is the composite map of $g_P: C \rightarrow C/G[P]$ and $f_P:C/G[P] \rightarrow \mathbb P^1$. 
We prove that there exists $Q_i$ such that $f_P$ is unramified at $g_P(Q_i)$. 
Assume by contradiction that for each $Q_i$, $f_P$ is ramified at $g_P(Q_i)$. 
Then, the ramification index at $Q_i$ for $\pi_P$ is equal to $\deg \pi_P$. 
Let $H_i$ be a linear homogeneous polynomial defining $\overline{PQ_i}$. 
Then, $H_i$ divides $G$ for each $i$. 
Since $\deg G < \deg H$, we have $G=0$. 
Then, by Theorem \ref{standard form}, $|G[P]|=2n$. 
This is a contradiction. 
Therefore, $f_P$ is unramified at $g_P(Q_i)$ for some $i$. 
Since $f_P$ is a double covering, there exists a point $R \in C/G[P]$ such that $f_P(R)=f_P(g_P(Q_i))$ and $f_P$ is unramified at $R$. 
Then, $\pi_P$ is unramified at each point $Q \in g_P^{-1}(R)$. 
We take a covering $h_P: \tilde{C} \rightarrow C$ such that $\pi_P \circ h_P=f_P\circ g_P \circ h_P$ gives a Galois covering. 
Since $f_P \circ g_P \circ h_P$ is ramified at every point in $(g_P\circ h_P)^{-1}(R)$ with index $\ge n$ by Fact \ref{Galois covering}(2), we have $\deg h_P \ge n$. 
Then, $|G_P| \ge 2n^2$. 
On the other hand, by Proposition \ref{Galois closure}, $|G_P|\le 2n^2$. 
\end{proof}

\begin{theorem} \label{2p^2}
Assume that $n$ is odd prime and $\deg \pi_P=2n$. 
Then, $|G[P]|=n$ if and only if $G_P \cong (\mathbb Z/n\mathbb Z) \times D_{2n}$. 
\end{theorem}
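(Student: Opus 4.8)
The plan is to analyse the tower $K(y)\subset K(C)\subset L_P$ by Galois theory, translating both $|G[P]|$ and the isomorphism type of $G_P$ into subgroup data inside $G_P=\mathrm{Gal}(L_P/K(y))$. Throughout I use that $C$ is smooth, so $G[P]=G_0[P]$, and that $G[P]=\mathrm{Aut}(K(C)/K(y))$ by the Remark after the Definition. Writing $H=\mathrm{Gal}(L_P/K(C))$, standard Galois theory gives $G[P]\cong N_{G_P}(H)/H$, while $H$ is core-free because $L_P$ is by definition the Galois closure. Since $\deg\pi_P=[K(C):K(y)]=2n$, in both directions $|H|=|G_P|/2n$.

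For the forward implication, assume $|G[P]|=n$. By Proposition \ref{Galois closure} the curve is $f(x^n,y)=0$ with $r:=\deg_X f$ satisfying $nr=\deg\pi_P=2n$, so $r=2$; thus $f$ is a quadratic in $X$ over $K(y)$ with roots $x_1=x^n$ and $x_2$, and it is irreducible since a root $x_1\in K(y)$ would make $x^n-x_1$ a proper factor of the irreducible polynomial $f(x^n,y)$. Hence $M:=K(y,x_1,x_2)$ is a quadratic extension and $L_P=M(\sqrt[n]{x_1},\sqrt[n]{x_2})$. Proposition \ref{2n^2} gives $|G_P|=2n^2$, so $[L_P:M]=n^2$; the two Kummer radicals are therefore independent and $N:=\mathrm{Gal}(L_P/M)\cong(\mathbb{Z}/n\mathbb{Z})^2$, generated by $\alpha,\beta$ scaling $\sqrt[n]{x_1},\sqrt[n]{x_2}$ respectively by a fixed primitive $n$-th root of unity. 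Choosing the lift $\rho$ of the nontrivial element of $\mathrm{Gal}(M/K(y))$ that exchanges the two radicals (possible after correcting by an element of $N$), one finds $\rho^2=1$ and $\rho\alpha\rho^{-1}=\beta$, so $G_P\cong(\mathbb{Z}/n\mathbb{Z})\wr(\mathbb{Z}/2\mathbb{Z})$. As $n$ is odd, $(\mathbb{Z}/n\mathbb{Z})^2$ splits as the $\rho$-fixed diagonal $\langle\alpha\beta\rangle$ and the $\rho$-inverted antidiagonal $\langle\alpha\beta^{-1}\rangle$; the diagonal is central while the antidiagonal together with $\rho$ generates a copy of $D_{2n}$, giving $G_P\cong(\mathbb{Z}/n\mathbb{Z})\times D_{2n}$.

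For the converse, assume $G_P\cong(\mathbb{Z}/n\mathbb{Z})\times D_{2n}$, so $|G_P|=2n^2$ and $|H|=n$. Write $G_P=\langle z\rangle\times\langle s,t\rangle$ with $z$ central of order $n$ and $\langle s,t\rangle\cong D_{2n}$ (so $s^n=t^2=1$, $tst^{-1}=s^{-1}$). Since $n$ is prime, any element of order $n$ lies in the abelian Sylow subgroup $\langle z,s\rangle\cong(\mathbb{Z}/n\mathbb{Z})^2$, so $H$ is one of its $n+1$ cyclic subgroups of order $n$. Of these, $\langle z\rangle$ is central and $\langle s\rangle$ is normal, hence both are excluded by core-freeness, so $H=\langle zs^k\rangle$ with $k\not\equiv 0$. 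A direct conjugation check using $tst^{-1}=s^{-1}$ and $n$ odd shows that such $H$ is indeed core-free and that its normalizer is exactly $\langle z,s\rangle$; therefore $|G[P]|=[N_{G_P}(H):H]=n^2/n=n$.

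The routine parts are the purely group-theoretic bookkeeping. The substantive points are that $f$ is irreducible, so $M/K(y)$ is genuinely quadratic, and that the two radicals are independent over $M$, i.e. $[L_P:M]=n^2$; I expect Proposition \ref{2n^2} to supply precisely the count $|G_P|=2n^2$ that forces this independence. The one genuinely delicate step is verifying that the lift $\rho$ can be normalized to exchange the two radicals, and hence acts on $N$ exactly by the coordinate swap; once that is established, the identification of the wreath product with $(\mathbb{Z}/n\mathbb{Z})\times D_{2n}$ and the normalizer computation are formal.
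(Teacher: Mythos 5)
Your proof is correct, but it takes a genuinely different route from the paper's in both directions. For the forward implication, the paper stays abstract: from $|G_P|=2n^2$ (Proposition \ref{2n^2}) it forms the split exact sequence $1 \to {\rm Gal}(L_P/K(C)^{G[P]}) \to G_P \to \mathbb Z/2\mathbb Z \to 1$, rules out the cyclic option $\mathbb Z/n^2\mathbb Z$ for the kernel because ${\rm Gal}(L_P/K(C))$ is not normal (so conjugation by an involution $\tau$ produces a second subgroup of order $n$), and then shows the involution acts on $(\mathbb Z/n\mathbb Z)^{\oplus 2}$ by a diagonalizable matrix with eigenvalues $1$ and $-1$, giving $(\mathbb Z/n\mathbb Z)\times(\mathbb Z/n\mathbb Z \rtimes \mathbb Z/2\mathbb Z)$. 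You instead realize $L_P$ concretely as $M(\sqrt[n]{x_1},\sqrt[n]{x_2})$ via the standard form and Kummer theory, identify $G_P$ as the wreath product $(\mathbb Z/n\mathbb Z)\wr(\mathbb Z/2\mathbb Z)$, and split it by the diagonal/antidiagonal decomposition; this is the same group-theoretic content made explicit, and your use of Proposition \ref{2n^2} to force independence of the two radicals plays exactly the role that the paper's rank argument plays (note that both arguments invoke oddness of $n$ at the corresponding step: you for $(\mathbb Z/n\mathbb Z)^2=\langle\alpha\beta\rangle\times\langle\alpha\beta^{-1}\rangle$, the paper for diagonalizability of $A_\tau$). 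Your normalization of the lift $\rho$ is fine: composing with $\alpha^{-b}\beta^{-a}$ kills the root-of-unity ambiguities, so $\rho$ honestly swaps the radicals. For the converse, the paper argues softly: it sandwiches $K(C)$ over the fixed field of an order-$n^2$ subgroup containing ${\rm Gal}(L_P/K(C))$ to get $|G[P]|\ge n$, then concludes by divisibility ($|G[P]|$ divides $2n$ and the extension is not Galois). You instead locate $H={\rm Gal}(L_P/K(C))$ exactly inside the unique Sylow $n$-subgroup $\langle z,s\rangle$, exclude $\langle z\rangle$ and $\langle s\rangle$ by core-freeness, and compute $|G[P]|=[N_{G_P}(H):H]=n$ directly; this is sharper (it pins down $H$ up to conjugacy) at the cost of invoking the identity $G[P]\cong N_{G_P}(H)/H$, which the paper's argument avoids. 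Both routes are complete; yours buys an explicit field-theoretic model of $L_P$ and of the conjugacy class of $H$, while the paper's is shorter and requires no explicit construction.
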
 

\begin{proof}
We prove the only-if part.
By Proposition \ref{2n^2}, $|G_P|=2n^2$. 
By Sylow's theorem, there exists an element of $G_P$ of order two.  
Let $\tau$ be such an element. 
Then, we have the split exact sequence
$$ 0 \rightarrow {\rm Gal}(L_P/K(C)^{G[P]}) \rightarrow G_P \rightarrow \mathbb Z/2\mathbb Z \rightarrow 0, $$ 
by considering the orders of groups. 
Since the group of order $n^2$ is Abelian (see, for example, \cite[p. 27]{suzuki}), the Galois group ${\rm Gal}(L_P/K(C)^{G[P]})$ is $(\mathbb Z/n^2\mathbb Z)$ or $(\mathbb Z/n\mathbb Z)^{\oplus 2}$. 
Since $K(C)/K(\mathbb P^1)$ is not Galois, the group ${\rm Gal}(L_P/K(C))$ is not a normal subgroup. 
Then, $\tau^{-1}{\rm Gal}(L_P/K(C))\tau \ne {\rm Gal}(L_P/K(C))$, and hence, we have at least two subgroups of ${\rm Gal}(L_P/K(C)^{G[P]})$ of order $n$. 
Then, we have ${\rm Gal}(L_P/K(C)^{G[P]}) \cong (\mathbb Z/n\mathbb Z)^{\oplus 2}$. 
Let ${\rm Gal}(L_P/K(C))=\langle \sigma \rangle$.  
Then $\sigma(\tau^{-1}\sigma\tau)\ne 1$ is fixed by the action of $\tau$, that is, $\tau^{-1}(\sigma(\tau^{-1}\sigma\tau))\tau=\sigma(\tau^{-1}\sigma\tau)$. 
Let 
$$ A_{\tau}=\left(
\begin{array}{cc}
1 & a \\
0 & b \\
\end{array}
\right)
$$
be the matrix representing $\tau$ as an action on the vector space ${\rm Gal}(L_P/K(C)^{G[P]}) \cong (\mathbb Z/n\mathbb Z)^{\oplus 2}$ over $\mathbb Z/n\mathbb Z$. 
Since $A_{\tau}^2=1$, $b=-1$ and $A_{\tau}$ is diagonalizable. 
Therefore, we have $G_P \cong (\mathbb Z/n\mathbb Z)\times (\mathbb Z/n\mathbb Z \rtimes \mathbb Z/2\mathbb Z)$.

We prove the if part. 
Assume that $G_P \cong (\mathbb Z/n\mathbb Z)\times D_{2n}$. 
Since $|{\rm Gal}(L_P/K(C))|=n$, there exists a subgroup $H$ of order $n^2$ such that ${\rm Gal}(L_P/K(C)) \subset H$. 
We take the fixed field $L_P^H$. 
Since $L_P/L_P^H$ is Galois and $[L_P:L_P^H]=n^2$, the extension is Abelian. 
Therefore, $K(C)/L_P^H$ is Galois with $[K(C):L_P^H]=n$. 
Since $K(\mathbb P^1) \subset L_P^H$, we have $|G[P]|\ge n$. 
By the definition of $G_P$, $K(C)/K(\mathbb P^1)$ is not Galois and $|G[P]|<2n$. 
Therefore, we have $|G[P]|=n$. 
\end{proof}

\section{Relations with dual curves}
We use the same notation as in \cite{fukasawa-miura1}. 

\begin{proposition} \label{injection}
If $P$ is extendable quasi-Galois, then there exists a unique point $\overline{P} \in \mathbb P^{2*}$ such that the natural map $\sigma \mapsto \overline{\sigma}$ induces an injection
$$ G_0[P] \hookrightarrow G_0[\overline{P}]. $$ 
\end{proposition}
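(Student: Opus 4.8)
The plan is to exploit the duality between $\mathbb{P}^2$ and $\mathbb{P}^{2*}$ exchanging points and lines. Recall that the dual curve $C^*\subset\mathbb{P}^{2*}$ is the closure of the image of the Gauss map $\gamma\colon C\dashrightarrow\mathbb{P}^{2*}$, $Q\mapsto T_QC$, where a line of $\mathbb{P}^2$ is regarded as a point of $\mathbb{P}^{2*}$; since $\deg C=d\ge 4$, the curve $C^*$ is again an irreducible plane curve of degree $\ge 4$, so Theorem \ref{standard form} and Corollaries \ref{fixed locus} and \ref{two groups} are available for $C^*$. For a linear transformation $\sigma$ of $\mathbb{P}^2$ with matrix $A_\sigma$, let $\overline{\sigma}$ denote the linear transformation of $\mathbb{P}^{2*}$ represented by $({}^tA_\sigma)^{-1}$. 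A direct computation shows that $\sigma\mapsto\overline{\sigma}$ is a group isomorphism ${\rm PGL}(\mathbb{P}^2)\xrightarrow{\sim}{\rm PGL}(\mathbb{P}^{2*})$; in particular it is injective. Moreover, a linear transformation carries tangent lines to tangent lines, so if $\sigma(C)=C$ then $\gamma\circ\sigma=\overline{\sigma}\circ\gamma$ on the smooth locus of $C$, whence $\overline{\sigma}(C^*)=C^*$ and $\overline{\sigma}$ restricts to an element of ${\rm Bir}(C^*)$.

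Next I would define $\overline{P}$ and check that $\overline{\sigma}\in G_0[\overline P]$ for $\sigma\in G_0[P]$. Fix $\sigma\in G_0[P]\setminus\{1\}$; by Corollary \ref{fixed locus} the set $F[P]\setminus\{P\}$ is a line $m$ independent of $\sigma$, and I define $\overline P\in\mathbb{P}^{2*}$ to be the point representing $m$. To verify the claim I pass to the standard form of Theorem \ref{standard form}, taking $P=(1:0:0)$ and $A_\sigma={\rm diag}(\zeta,1,1)$, so that $m=\{X=0\}$ and, in the dual coordinates $(X':Y':Z')$ dual to $(X:Y:Z)$, $\overline P=(1:0:0)$. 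Then $\overline{\sigma}$ is represented by ${\rm diag}(\zeta^{-1},1,1)$, which fixes $\overline P$ and satisfies $\pi_{\overline P}\circ\overline{\sigma}=\pi_{\overline P}$ for the projection $\pi_{\overline P}\colon(X':Y':Z')\mapsto(Y':Z')$. Together with $\overline{\sigma}(C^*)=C^*$ and the characterization $G[\overline P]=\{\tau\mid\pi_{\overline P}\circ\tau=\pi_{\overline P}\}$, this gives $\overline{\sigma}\in G[\overline P]$; being linear, $\overline{\sigma}\in G_0[\overline P]$. Hence $\sigma\mapsto\overline{\sigma}$ maps $G_0[P]$ into $G_0[\overline P]$.

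Injectivity of the resulting map $G_0[P]\to G_0[\overline P]$ is inherited from the injectivity of $\sigma\mapsto\overline{\sigma}$ on all of ${\rm PGL}(\mathbb{P}^2)$. For uniqueness, suppose $\overline P_1\ne\overline P_2$ both satisfy the conclusion; since $|G_0[P]|\ge 2$ there is $\sigma\ne 1$, and then $\overline{\sigma}$ is a nontrivial element of $G_0[\overline P_1]\cap G_0[\overline P_2]$, contradicting Corollary \ref{two groups} applied in $\mathbb{P}^{2*}$. Thus $\overline P$ is unique. The main obstacle is the geometric input of the second step: verifying the equivariance $\gamma\circ\sigma=\overline{\sigma}\circ\gamma$ so that $\overline{\sigma}$ preserves $C^*$, and confirming that $C^*$ is a plane curve of degree $\ge 4$ so that the structural results of Section 3 transfer to the dual plane; once these are secured, the construction of $\overline P$ together with the injectivity and uniqueness are formal consequences of Corollaries \ref{fixed locus} and \ref{two groups}.
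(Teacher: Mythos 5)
Your argument is essentially the paper's own: the paper proves Proposition \ref{injection} simply by citing \cite[Lemma 2.4, Proposition 1.5]{fukasawa-miura1}, and the cited proofs are exactly your construction --- $\overline{\sigma}$ given by the inverse transpose, equivariance $\gamma\circ\sigma=\overline{\sigma}\circ\gamma$ of the Gauss map, $\overline{P}$ the dual point of the line $F[P]\setminus\{P\}$, the diagonal standard form of Theorem \ref{standard form} yielding $\pi_{\overline{P}}\circ\overline{\sigma}=\pi_{\overline{P}}$, and uniqueness via Corollary \ref{two groups} in the dual plane. One inaccuracy worth fixing: your claim that $\deg C^*\ge 4$ whenever $d\ge 4$ is false in general --- for instance the tricuspidal quartic has dual a nodal cubic, and the nine-cuspidal sextic has dual a smooth cubic --- but the slip is harmless, since in characteristic $0$ biduality forces $\deg C^*\ge 3$ (a dual of degree $2$ would make $C$ a conic), and the proofs of Theorem \ref{standard form} and Corollaries \ref{fixed locus} and \ref{two groups} only use that the curve is not contained in a conic, so they transfer to $C^*$ of degree $\ge 3$ verbatim.
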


\begin{proof}
See the proofs of \cite[Lemma 2.4, Proposition 1.5]{fukasawa-miura1}. 
\end{proof}

\begin{corollary}
If $P$ is extendable quasi-Galois, then $\overline{P}$ is extendable quasi-Galois. 
\end{corollary}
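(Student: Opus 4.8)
The plan is to deduce this immediately from Proposition \ref{injection}, which carries all the actual content. By hypothesis $P$ is extendable quasi-Galois, which by definition means $|G_0[P]| \ge 2$. First I would invoke Proposition \ref{injection}: it produces a point $\overline{P} \in \mathbb{P}^{2*}$ together with an injective homomorphism $G_0[P] \hookrightarrow G_0[\overline{P}]$ induced by $\sigma \mapsto \overline{\sigma}$.

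Next, since an injection of finite sets cannot decrease cardinality, I would conclude that $|G_0[\overline{P}]| \ge |G_0[P]| \ge 2$. By the definition of an extendable quasi-Galois point (namely $|G_0[\cdot]| \ge 2$), this inequality is precisely the assertion that $\overline{P}$ is extendable quasi-Galois, which completes the argument.

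There is essentially no obstacle here; the entire substance of the corollary is already packaged into the existence of the injection established in Proposition \ref{injection}, so the proof collapses to the trivial remark that an injective map increases (or preserves) cardinality. The only point deserving a moment's care is that the target of the injection is $G_0[\overline{P}]$, the subgroup coming from linear transformations, rather than the full $G[\overline{P}]$ or an abstract subgroup of ${\rm Bir}$; this is exactly what guarantees that the image witnesses extendability of $\overline{P}$, and it is already built into the statement of Proposition \ref{injection}.
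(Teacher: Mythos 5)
Your proof is correct and matches the paper's intent exactly: the corollary is stated without proof precisely because it follows immediately from the injection $G_0[P] \hookrightarrow G_0[\overline{P}]$ of Proposition \ref{injection}, giving $|G_0[\overline{P}]| \ge |G_0[P]| \ge 2$. Your closing observation that the target is $G_0[\overline{P}]$ rather than the full $G[\overline{P}]$ is the right point to flag, and nothing further is needed.
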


\begin{corollary} 
Assume that $C$ is smooth and $d \ge 4$. 
If $R \in \mathbb P^{2*}$ is quasi-Galois for $C^*$, then $R$ is extendable and $|G[R]|=|G_0[R]| \le d$. 
\end{corollary}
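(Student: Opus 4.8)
The plan is to prove the two assertions separately: first that every birational transformation of $C^*$ belonging to $G[R]$ is the restriction of a linear transformation of $\mathbb{P}^{2*}$ (so that $R$ is extendable and $G[R]=G_0[R]$), and then to bound $|G_0[R]|$ by transporting the problem back to $C$ through biduality together with Proposition \ref{injection}.

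For extendability, I would use that $C$ is smooth and $p=0$, so that the Gauss map $\gamma\colon C\to C^*$, $Q\mapsto T_QC$, is birational; hence $C$ is the normalization of $C^*$ and $\mathrm{Bir}(C^*)\cong\mathrm{Bir}(C)=\mathrm{Aut}(C)$. Each $\sigma\in\mathrm{Aut}(C)$ is the restriction of a linear transformation $\phi$ of $\mathbb{P}^2$, and since $\phi$ preserves $C$ the tangent-line identity $T_{\sigma(Q)}C=\phi(T_QC)$ shows that $\gamma\circ\sigma=\phi^{\vee}\circ\gamma$, where $\phi^{\vee}$ is the induced linear transformation of $\mathbb{P}^{2*}$. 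Thus the birational transformation of $C^*$ corresponding to $\sigma$ is exactly $\phi^{\vee}|_{C^*}$, which is linear. In particular every element of $G[R]\subset\mathrm{Bir}(C^*)$ is the restriction of a linear transformation of $\mathbb{P}^{2*}$, so $G[R]=G_0[R]$ and $R$ is extendable.

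For the bound, I would invoke reflexivity in characteristic zero, $(C^*)^*=C$, and apply Proposition \ref{injection} with the curve $C^*$ in place of $C$: since $R$ is extendable quasi-Galois for $C^*$, there is a point $\overline{R}\in(\mathbb{P}^{2*})^*=\mathbb{P}^2$ together with an injection $G_0[R]\hookrightarrow G_0[\overline{R}]$, where $G_0[\overline{R}]$ now refers to the group attached to $\overline{R}$ for the curve $(C^*)^*=C$. Because the injection forces $|G_0[\overline{R}]|\ge 2$, the point $\overline{R}$ is extendable quasi-Galois for the smooth curve $C$, so $G_0[\overline{R}]=G[\overline{R}]\subset\mathrm{Aut}(C)$, and Lemma \ref{subgroup} gives that $|G[\overline{R}]|$ divides $\deg\pi_{\overline{R}}$. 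Since $\deg\pi_{\overline{R}}=d-1$ if $\overline{R}\in C$ and $\deg\pi_{\overline{R}}=d$ otherwise, we obtain $|G_0[\overline{R}]|\le d$, and therefore $|G[R]|=|G_0[R]|\le|G_0[\overline{R}]|\le d$.

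The main obstacle I expect is the first step: verifying carefully that $\mathrm{Bir}(C^*)$ consists entirely of restrictions of linear transformations of the dual plane, since $C^*$ is in general singular and could a priori carry non-linear birational self-maps. This is precisely where the smoothness of $C$ and the hypothesis $p=0$ are essential, via the birationality of the Gauss map and the intertwining relation $\gamma\circ\sigma=\phi^{\vee}\circ\gamma$; the bound in the second step is then a formal consequence of biduality and the injection already furnished by Proposition \ref{injection}.
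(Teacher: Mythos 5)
Your proposal is correct and takes essentially the same route as the paper: the paper disposes of extendability by citing \cite[Lemma 3.1]{fukasawa-miura1} --- whose content is precisely your argument that the birational Gauss map identifies ${\rm Bir}(C^*)$ with ${\rm Aut}(C)$ and that the intertwining relation $\gamma\circ\sigma=\phi^{\vee}\circ\gamma$ realizes each element as the restriction of a linear transformation of $\mathbb{P}^{2*}$ --- and then, exactly as you do, applies Proposition \ref{injection} (with biduality $(C^*)^*=C$) and bounds $|G_0[\overline{R}]|$ by $d$ via Lemma \ref{subgroup}. The only difference is that you supply a direct proof of the step the paper outsources to that citation, which is appropriate for a blind reconstruction.
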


\begin{proof}
According to \cite[Lemma 3.1]{fukasawa-miura1}, $R$ is extendable with $G_0[R]=G[R]$. 
By Proposition \ref{injection}, we have an injection $G_0[R] \hookrightarrow G_0[\overline{R}]$, where $\overline{R} \in \mathbb P^2$. 
Then, $|G_0[\overline{R}]| \le d$. 
\end{proof}

\begin{theorem} \label{dual}
If $P$ is extendable quasi-Galois, then $\overline{\overline{P}}=P$ and $G_0[P] \cong G_0[\overline{P}]$. 
\end{theorem}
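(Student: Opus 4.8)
The plan is to apply Proposition \ref{injection} twice---once to $P$ for the curve $C$ and once to $\overline{P}$ for the dual curve $C^*$---and to exploit the fact that dualizing a linear transformation is an involution. Since $p=0$, the biduality (reflexivity) theorem gives $C^{**}=C$ and identifies $\mathbb P^{2**}$ with $\mathbb P^2$; moreover the operation $\sigma\mapsto\overline{\sigma}$ sending a linear transformation to its dual (the transpose-inverse of a representing matrix) satisfies $\overline{\overline{\sigma}}=\sigma$. By the corollary following Proposition \ref{injection}, $\overline{P}$ is again extendable quasi-Galois, so Proposition \ref{injection} applies to it and produces a point $\overline{\overline{P}}\in\mathbb P^{2**}=\mathbb P^2$ together with an injection $G_0[\overline{P}]\hookrightarrow G_0[\overline{\overline{P}}]$.

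First I would identify $\overline{P}$ explicitly using the standard form of Theorem \ref{standard form}. Taking $P=(1:0:0)$ and a generator $\sigma$ with $A_\sigma=\mathrm{diag}(\zeta,1,1)$, the dual transformation $\overline{\sigma}$ is represented by $(A_\sigma^{-1})^{t}=\mathrm{diag}(\zeta^{-1},1,1)$. Its unique isolated fixed point in $\mathbb P^{2*}$ is the point corresponding to the line $\{X=0\}=F[P]\setminus\{P\}$, so $\overline{P}$ is the dual point of the axis of $P$. Reading off the same matrix in the dual plane, the axis $F[\overline{P}]\setminus\{\overline{P}\}$ is the pencil of lines through $P$, i.e.\ the dual line $\check P$ of $P$. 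Hence $\overline{\overline{P}}$, being the dual point of this axis, equals $P$; this is the first assertion of Theorem \ref{dual}.

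For the isomorphism, I would compose the two injections
$$ G_0[P]\ \hookrightarrow\ G_0[\overline{P}]\ \hookrightarrow\ G_0[\overline{\overline{P}}]=G_0[P]. $$
Since each arrow is induced by $\sigma\mapsto\overline{\sigma}$ and $\overline{\overline{\sigma}}=\sigma$, the composite is the identity of $G_0[P]$ under the identification $\overline{\overline{P}}=P$. As both maps are injective homomorphisms of finite groups, comparing orders gives $|G_0[P]|\le|G_0[\overline{P}]|\le|G_0[\overline{\overline{P}}]|=|G_0[P]|$, forcing equality throughout; thus the first injection is a bijection and $G_0[P]\cong G_0[\overline{P}]$.

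The main obstacle I expect is pinning down the construction of $\overline{P}$ from Proposition \ref{injection} precisely enough to verify $\overline{\overline{P}}=P$: one must check that the duality used in Proposition \ref{injection} is the same biduality that sends the axis of $\overline{P}$ back to $P$, rather than some other marked point, and that this identification is compatible with the relation $\overline{\overline{\sigma}}=\sigma$ at the level of groups. Once these compatibilities are established, the order comparison makes the remaining argument routine.
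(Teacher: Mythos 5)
Your proposal is correct and follows essentially the same route as the paper: the paper's (very terse) proof likewise rests on $\overline{\overline{\sigma}}=\sigma$ together with the uniqueness of the marked point $\overline{P}$ (via Corollary \ref{fixed locus}), so that applying Proposition \ref{injection} twice yields injections $G_0[P]\hookrightarrow G_0[\overline{P}]\hookrightarrow G_0[\overline{\overline{P}}]=G_0[P]$ composing to the identity. Your explicit standard-form computation identifying $\overline{P}$ as the dual point of the axis $F[P]\setminus\{P\}$, and the axis of $\overline{P}$ as the pencil $\check{P}$, simply makes explicit what the paper leaves implicit, and your order-comparison conclusion is exactly the intended argument.
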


\begin{proof}
Since $\overline{\overline{\sigma}}=\sigma$ for any $\sigma \in G_0[P]$, and $F[\overline{P}]$ does not depend on elements of $G_0[\overline{P}]\setminus \{1\}$ by Corollary \ref{fixed locus}, we have the conclusion. 
\end{proof}


\begin{thebibliography}{99} 
\bibitem{acgh} E. Arbarello, M. Cornalba, P. A. Griffiths and J. Harris, {\it Geometry of algebraic curves}, Vol. I. Grundlehren der Mathematischen Wissenschaften, {\bf 267}, Springer-Verlag, New York, 1985. 
\bibitem{artebani-dolgachev} M. Artebani and I. Dolgachev, The Hesse pencil of plane cubic curves, Enseign. Math. (2) {\bf 55} (2009), 235--273. 
\bibitem{burnside} W. Burnside, {\it Theory of groups of finite order}, Cambridge Univ. Press, Cambridge, 1911; reprinted by Dover, New York. 
\bibitem{chang} H. C. Chang, On plane algebraic curves, Chinese J. Math. {\bf 6} (1978), 185--189. 
\bibitem{dik} H. Doi, K. Idei and H. Kaneta, Uniqueness of the most symmetric non-singular plane sextics, Osaka J. Math. {\bf 37} (2000), 667--687. 
\bibitem{fukasawa} S. Fukasawa, Galois points for a plane curve in arbitrary characteristic, Proceedings of the IV Iberoamerican conference on complex geometry, Geom. Dedicata {\bf 139} (2009), 211--218. 
\bibitem{fukasawa-miura1} S. Fukasawa and K. Miura, Galois points for a plane curve and its dual curve, Rend. Sem. Mat. Univ. Padova {\bf 132} (2014), 61--74. 
\bibitem{fukasawa-miura2} S. Fukasawa and K. Miura, Galois points for a plane curve and its dual curve, II, J. Pure Appl. Algebra {\bf 220} (2016), 2038--2048. 
\bibitem{hartshorne} R. Hartshorne, {\it Algebraic geometry}, GTM 52, Springer-Verlag, New York-Heidelberg, 1977.
\bibitem{harui} T. Harui, Automorphism groups of smooth plane curves, preprint, arXiv:1306.5842. 
\bibitem{kty} M. Kanazawa, T. Takahashi and H. Yoshihara, The group generated by automorphisms belonging to Galois points of the quartic surface, Nihonkai Math. J. {\bf 12} (2001), 89--99.
\bibitem{klein} F. Klein, Ueber die Transformation siebenter Ordnung der elliptischen Functionen, Math. Ann. {\bf 14} (1879), 428--471.  
\bibitem{miura1} K. Miura, Field theory for function fields of plane quintic curves, Algebra Colloq. {\bf 9} (2002), 303--312. 
\bibitem{miura2} K. Miura, Galois points for plane curves and Cremona transformations, J. Algebra {\bf 320} (2008), 987--995. 
\bibitem{miura-ohbuchi} K. Miura and A. Ohbuchi, Automorphism group of plane curve computed by Galois points, Beitr. Algebra Geom. {\bf 56} (2015), 695--702. 
\bibitem{miura-yoshihara1} K. Miura and H. Yoshihara, Field theory for function fields of plane quartic curves, J. Algebra {\bf 226} (2000), 283--294. 
\bibitem{miura-yoshihara2} K. Miura and H. Yoshihara, Field theory for the function field of the quintic Fermat curve, Comm. Algebra {\bf 28} (2000), 1979--1988. 
\bibitem{namba} M. Namba, {\it Geometry of projective algebraic curves}, Marcel Dekker, Inc., New York, 1984. 
\bibitem{pirola-schlesinger} G. P. Pirola and E. Schlesinger, Monodromy of projective curves, J. Algebraic Geom. {\bf 14} (2005), 623--642. 
\bibitem{rodriguez-gonzalez} R. Rodriguez and V. Gonz\'{a}lez-Aguilera, Fermat's quartic curve, Klein's curve and the tetrahedron, {\it Extremal Riemann Surfaces} (San Francisco, CA, 1995), 43--62, Contemp. Math., {\bf 201}, Amer. Math. Soc., Providence, RI, 1997. 
\bibitem{stichtenoth} H. Stichtenoth, {\it Algebraic function fields and codes}, Universitext, Springer-Verlag, Berlin, 1993. 
\bibitem{suzuki} M. Suzuki, {\it Group theory I}, Grundlehren der Mathematischen Wissenschaften, {\bf 247}, Springer-Verlag, Berlin-New York, 1982.
\bibitem{takahashi} T. Takahashi, Projection of a non-singular plane quintic curve and the dihedral group of order eight, Rend. Sem. Mat. Univ. Padova, to appear. 
\bibitem{wiman} A. Wiman, Ueber eine einfache Gruppe von 360 ebenen collineationen, Math. Ann. {\bf 47} (1896), 531--556. 
\bibitem{yoshihara1} H. Yoshihara, Function field theory of plane curves by dual curves, J. Algebra {\bf 239} (2001), 340--355. 
\bibitem{yoshihara2} H. Yoshihara, Galois points for smooth hypersurfaces, J. Algebra {\bf 264} (2003), 520--534. 
\bibitem{yoshihara3} H. Yoshihara, Galois lines for space curves, Algebra Colloq. {\bf 13} (2006), 455--469. 
\bibitem{yoshihara4} H. Yoshihara, Galois embedding of algebraic variety and its application to abelian surface, Rend. Sem. Mat. Univ. Padova {\bf 117} (2007), 69--85. 
\bibitem{yoshihara5} H. Yoshihara, Rational curve with Galois point and extendable Galois automorphism, J. Algebra {\bf 321} (2009), 1463--1472.
\end{thebibliography}
\end{document}